\documentclass[10pt]{article}
\usepackage[utf8]{inputenc}
\usepackage[margin = 1.95 cm]{geometry}
\usepackage{amsfonts,amsthm, enumitem,amsmath}

\usepackage{graphicx, caption}
\usepackage{amssymb}
\usepackage{setspace}
\usepackage{soul}
\usepackage{changepage}
\usepackage{comment}
\usepackage{float}
\usepackage[pdftex,colorlinks,citecolor=blue,bookmarks=false]{hyperref}
\usepackage{lscape}
\usepackage{mathtools}
\usepackage{thm-restate}

\usepackage{bbm}
\usepackage{cleveref}
\usepackage{graphicx}
\usepackage{mathrsfs}
\usepackage{algorithm}
\usepackage{algpseudocode}
\theoremstyle{plain}

\newtheorem*{thm*}{Theorem}
\newtheorem{theorem}{Theorem}
\Crefname{theorem}{Theorem}{Theorems}
\numberwithin{theorem}{section}

\newtheorem*{lem*}{Lemma}
\newtheorem{lemma}[theorem]{Lemma}
\Crefname{lemma}{Lemma}{Lemmas}

\newtheorem*{claim*}{Claim}
\newtheorem{claim}{Claim}[theorem]
\crefname{claim}{Claim}{Claims}
\Crefname{claim}{Claim}{Claims}

\newtheorem{prop}[theorem]{Proposition}
\Crefname{prop}{Proposition}{Propositions}

\newtheorem{corollary}[theorem]{Corollary}
\crefname{corollary}{Corollary}{Corollaries}

\newtheorem{conj}[theorem]{Conjecture}
\crefname{conj}{Conjecture}{Conjectures}

\newtheorem*{conj*}{Conjecture}

\Crefname{qn}{Question}{Questions}

\Crefname{obs}{Observation}{Observations}

\Crefname{ex}{Example}{Examples}

\theoremstyle{definition}

\Crefname{prob}{Problem}{Problems}

\newtheorem{defn}[theorem]{Definition}
\Crefname{defn}{Definition}{Definitions}

\newtheorem*{defn*}{Definition}

\theoremstyle{remark}
\newtheorem*{rem}{Remark}

\renewenvironment{proof}[1][]{\begin{trivlist}
\item[\hspace{\labelsep}{\bf\noindent Proof#1.\/}] }{\qed\end{trivlist}}

\newcommand{\remove}[1]{}

\newcommand{\cH}{\mathcal{H}}
\newcommand{\cC}{\mathcal{C}}
\newcommand{\cM}{\mathcal{M}}

\renewcommand{\P}{\mathcal{P}}

\renewcommand{\setminus}{-}
\newcommand{\Ex}{\mathbb{E}}
\newcommand{\bR}{\mathbb{R}}
\renewcommand{\Pr}{\mathbb{P}}

\newcommand{\comp}[1]{\overline{#1}}

\newcommand{\eps}{\varepsilon}

\renewcommand{\P}{\mathbb{P}}

\DeclareMathOperator{\bin}{Bin}
\DeclareMathOperator{\TF}{TF}
\DeclareMathOperator{\TK}{TK}

\usepackage[square,sort,comma,numbers]{natbib}
\setlength{\bibsep}{1 pt plus 10 ex}

\setlength{\parskip}{0pt}
\setlength{\parindent}{15pt}

\addtolength{\intextsep}{6pt} 
\addtolength{\abovecaptionskip}{10pt}
\addtolength{\belowcaptionskip}{-5pt}
\captionsetup{width=0.8\textwidth, labelfont=bf, parskip=5pt}

\setstretch{1.1}

\theoremstyle{plain}
\makeatletter
\newcommand{\optionaldesc}[2]{%
  \phantomsection
  #1\protected@edef\@currentlabel{#1}\label{#2}%
}
\makeatother

\title{Nearly Hamilton cycles in sublinear expanders, and applications}

\setlength{\parskip}{\medskipamount}
\setlength{\parindent}{0pt}

\addtolength{\intextsep}{2pt} 
\addtolength{\abovecaptionskip}{5pt}
\addtolength{\belowcaptionskip}{-5pt}
\captionsetup{width=0.8\textwidth, labelfont=bf, parskip=5pt}

\author{Shoham Letzter\thanks{Department of Mathematics, University College London, Gower Street, London, WC1E 6BT, U.K. Research supported by the Royal Society. Email:~\textbf{s.letzter@ucl.ac.uk}.} \and Abhishek Methuku\thanks{Department of Mathematics, University of Illinois at Urbana–Champaign, Urbana, IL, USA. Research supported by the UIUC Campus Research Board Award RB25050. Email:~\textbf{abhishekmethuku@gmail.com}} \and Benny Sudakov\thanks{Department of Mathematics, ETH, Z\"urich, Switzerland. Research supported in part by SNSF grant 200021-228014. Email:~\textbf{benjamin.sudakov@math.ethz.ch}}}
\date{}

\begin{document}

\maketitle
\begin{abstract}

We develop novel methods for constructing nearly Hamilton cycles in sublinear expanders with good regularity properties, as well as new techniques for finding such expanders in general graphs. These methods are of independent interest due to their potential for various applications to embedding problems in sparse graphs. In particular, using these tools, we make substantial progress towards a twenty-year-old conjecture of Verstra\"ete, which asserts that for any given graph $F$, nearly all vertices of every $d$-regular graph $G$ can be covered by vertex-disjoint $F$-subdivisions. This significantly extends previous work on the conjecture by Kelmans, Mubayi and Sudakov, Alon, and K\"uhn and Osthus. Additionally, we present applications of our methods to two other problems.

\end{abstract}

\section{Introduction}

A \emph{Hamilton cycle} in a graph $G$ is a cycle passing through all vertices of $G$. A graph is called \emph{Hamiltonian} if it admits a Hamilton cycle. Hamiltonicity is a central notion in graph theory. Since deciding whether a given graph contains a Hamilton cycle is known to be NP-complete, much effort has been devoted to obtaining sufficient conditions for the existence of a Hamilton cycle, for example see~\cite{ajtai1985first, chvatal1972note, csaba2016proof, cuckler2009hamiltonian, draganic2024hamiltonicity} and the surveys~\cite{gould2014recent, kuhn2014hamilton}. Most existing Hamiltonicity conditions, such as Dirac's theorem~\cite{dirac1952some}, are typically applicable only to very dense graphs. Therefore, identifying Hamiltonicity conditions that can also be applied to sparse graphs is a topic of significant interest. Over the last 50 years, a major focus of research has been understanding Hamiltonicity in sparse random graphs. Erd\H{o}s and R\'enyi~\cite{Erdos:1960} posed the foundational question of determining the threshold probability for Hamiltonicity in random graphs. After a series of efforts by various researchers, including Korshunov~\cite{MR434878} and P\'osa~\cite{MR389666}, the problem was ultimately resolved by Koml\'os and Szemer\'edi~\cite{komlos1983limit}, and independently by Bollob\'as~\cite{MR777163}. 

\subsection{Long cycles in Expanders}

    Since Hamiltonicity in random graphs is well understood, a key area of research is to look for Hamilton cycles in deterministic graphs that satisfy `pseudorandom' conditions which enable them to mimic the properties of random graphs. This line of inquiry is particularly valuable for various applications such as Hamiltonicity in random Cayley graphs and Alon and Bourgain’s work on additive patterns in multiplicative subgroups~\cite{alon2014additive}. A well-known class of pseudorandom graphs, introduced by Alon, is defined using spectral properties as follows. A graph $G$ is an \emph{$(n, d, \lambda)$-graph} if it is $d$-regular with $n$ vertices and the second largest eigenvalue of $G$ in absolute value is at most $\lambda$. In 2003, Krivelevich and Sudakov, in their influential paper~\cite{krivelevich2003sparse}, proved that if $d$ is sufficiently larger than $\lambda$, then every $(n, d, \lambda)$-graph is Hamiltonian.  In the same paper, they conjectured that there exists $C > 0$ such that if $\frac{d}{\lambda} \ge C$, then every $(n, d, \lambda)$-graph is Hamiltonian. Shortly after this conjecture was stated, several papers, such as \cite{brandt2006global}, considered an even stronger conjecture, singling out the key properties of $(n, d, \lambda)$-graphs that were believed to be instrumental in demonstrating their Hamiltonicity.  This stronger conjecture asserts that there exists a constant $C > 0$ such that every `$C$-expander' is Hamiltonian. A graph $G$ is called a $C$-expander if it satisfies the following two properties: for every subset $X \subseteq V(G)$ with $|X| < \frac{n}{2C}$, the neighbourhood of $X$ satisfies $|N(X)| \geq C|X|$, and there is an edge between any two disjoint sets of at least $\frac{n}{2C}$ vertices. Despite significant attention (see, e.g., \cite{brandt2006global,hefetz2009hamilton, krivelevich2006pseudo, krivelevich2011number, GlockMCS}) and many motivating applications, these two conjectures were only recently resolved by Dragani{\'c}, Montgomery, Munh\'a Correia,  Pokrovskiy and Sudakov~\cite{draganic2024hamiltonicity}.

	Graph expansion is a fundamental concept in graph theory and computer science, with a wide range of applications; see, for example, the comprehensive survey by Hoory, Linial, and Wigderson~\cite{hoory2006expander}. Most of the expanders studied in the literature are constant expanders, defined by their linear expansion (such as the $C$-expanders discussed earlier). More formally, such graphs $G$ satisfy the property $|N_G(U)| \geq \alpha |U|$ for any subset $U \subseteq V(G)$ that is not too large and not too small, where the expansion factor $\alpha$ is some strictly positive absolute constant independent of $G$.  Sublinear expansion is a weaker notion of this classical expansion, introduced by Koml\'os and Szemer\'edi in the `90s~\cite{komlos1994topological, komlos1996topological}, and characterised by taking a much smaller value of $\alpha$. More precisely, if $G$ is a sublinear expander of order $n$, then $\alpha$ can be taken to satisfy $\alpha = \Omega(\frac{1}{(\log n)^2})$. Although sublinear expanders exhibit weaker expansion properties, their key advantage is that they can be found in essentially any graph. This notion has played a central role in the recent resolution of several long-standing conjectures (see, e.g.\ \cite{alon2023essentially, liu2017proof,liu2023solution,bucic2022erdos,montgomery2023proof, chakraborti2024edge} for notable examples and the survey~\cite{letzter2024sublinear} for a rather comprehensive list).

The study of cycles in expanders is a key area of research; see, for example, \cite{friedman2021cycle, MR3967294, haslegrave2022crux}. Notably, a classic result by Krivelevich \cite{krivelevich2019long} establishes that every $n$-vertex expander with expansion factor $\alpha$ contains a cycle of length $\Omega(\alpha n)$. Hence, every $n$-vertex sublinear expander with expansion factor, say $\alpha = \frac{1}{(\log n)^c}$ for some constant $c>0$, contains a cycle of length $\Omega(\frac{n}{(\log n)^c})$. In general, we cannot necessarily guarantee a linear-sized cycle in sublinear expanders, as shown by the imbalanced complete bipartite graph $K_{n, \alpha n}$. In this paper, we prove that, somewhat surprisingly, sublinear expanders with reasonably good regularity properties admit a nearly Hamilton cycle; see \Cref{nearly Ham informal} for an informal statement and \Cref{lem:almostspannningtopological} for the precise formulation. We also show how to find such expanders in general graphs; see \Cref{refininglemma} for an informal statement and \Cref{cor:expandercoverstrong} for the precise formulation.

Using these techniques, we make significant progress towards resolving a long-standing conjecture of Verstra\"ete from 2002 on packing subdivisions in regular graphs, which we address in the next subsection. Our second application concerns the well-known conjecture of Magnant and Martin~\cite{magnant2009note} from 2009, which asserts that any $d$-regular graph can be partitioned into $n/(d+1)$ paths. Recently, Montgomery, M\"uyesser, Pokrovskiy, and Sudakov~\cite{montgomery2024approximate} asymptotically confirmed this conjecture by showing that nearly all vertices of a $d$-regular graph can be partitioned into $\frac{n}{d+1}$ paths. As a simple consequence of our methods, we show that nearly all vertices of a $d$-regular graph with sufficiently large degree can actually be partitioned into $\frac{n}{d+1}$ \emph{cycles} (see the discussion following \Cref{conj:packingsubdivisons} for further details on this conjecture). Finally, our methods can also be used to find a cycle with many chords, addressing a question of Chen, Erd\H{o}s, and Staton~\cite{chen1996proof} from 1996, and recovering—up to a slightly weaker polylogarithmic factor—a recent result by Dragani\'c, Methuku, Munh\'a Correia, and Sudakov~\cite{draganic2023cycles} (see Section~\ref{sec:application:chords} for details on these applications). Given the prominence of sublinear expanders, these new tools are likely to find further applications in future research. 

\subsection{Packing subgraphs in regular graphs} 

Packings in graphs have been extensively studied. Given two graphs $H$ and $G$, an $H$-packing in $G$ is a collection of vertex-disjoint copies of $H$ in $G$. An $H$-packing in $G$ is called \emph{perfect} if it covers all of the vertices of $G$. The celebrated Hajnal--Szemer\'edi theorem~\cite{hajnal1970proof} from 1970 states that every graph whose order $n$ is divisible by $t$ and whose minimum degree is at least $(1 - \frac{1}{t})n$ contains a perfect $K_t$-packing. (The case $k = 3$ was proved earlier by Corr\'adi and Hajnal~\cite{corradi1963maximal}.)

This theorem is best possible in the sense that the bound on the minimum degree cannot be lowered. 
For non-complete graphs $H$, a series of papers—including ones by 
Alon and Yuster~\cite{alon1996h}, 
Koml\'os, S\'ark\H{o}zy, and Szemer\'edi~\cite{komlos2001proof}, 
and Koml\'os~\cite{komlos2000tiling}—
determined the minimum-degree thresholds that force a perfect $H$-packing in a graph, 
culminating in the work of K\"uhn and Osthus~\cite{kuhn2009minimum}, who essentially settled the problem by giving the best possible such condition (up to an additive constant) for any graph $H$, in terms of the so-called \emph{critical chromatic number}.

In view of the above results, rather surprisingly, K\"uhn and Osthus~\cite{kuhn2005packings} showed that if we restrict our attention to packings in regular graphs, then \emph{any} linear bound on the minimum degree guarantees an almost perfect $H$-packing. More precisely, they showed that for every bipartite graph $H$ and every  $0 < c, \alpha \le 1$, every $cn$-regular graph $G$ of sufficiently large order $n$ has an $H$-packing which covers all but at most $\alpha n$ vertices of $G$. Resolving a conjecture of K\"uhn and Osthus~\cite{kuhn2005packings}, in an upcoming paper \cite{LetzterMetukuSudakovdense} the authors show that the bound $\alpha n$ on the number of uncovered vertices can actually be significantly lowered to obtain an $H$-packing which covers all but a constant number of vertices of $G$, which is clearly best possible, in the sense that there is not always an $H$-packing covering all vertices of $G$, and in fact, there are examples where the number of uncovered vertices grows with $|V(H)|$ and $\frac{1}{c}$.

The notion of subdivisions has played an important role in topological graph theory since the seminal result of Kuratowski~\cite{kuratowski1930probleme} from 1930 showing that a graph is planar if and only if it does not contain a $K_5$-subdivision or a $K_{3,3}$-subdivision. Here, for a given graph $F$, an \emph{$F$-subdivision} or a \emph{subdivision of $F$}, denoted by $\TF$ (a topological copy of $F$), is a graph obtained by replacing each edge $uv$ in $F$ with a path with ends $u$ and $v$, such that the internal vertex sets of these paths are pairwise vertex-disjoint and vertex-disjoint from the original vertices of $F$.
These original vertices of $F$ are called the \emph{branch vertices} of $\TF$. One of the most classical results in this area is due to Mader~\cite{MR0220616}, who showed that
there is some $d = d(t)$ such that every graph with an average degree at least $d$ contains a
subdivision of the complete graph $K_t$. Mader~\cite{MR0220616}, and independently Erd\H{o}s and Hajnal~\cite{MR0173247} conjectured that $d(t) = O(t^2)$. In the `90s, Koml\'os and Szemer\'edi~\cite{komlos1994topological, komlos1996topological} (using sublinear expanders), and independently, Bollob\'as and Thomason~\cite{bollobas1998proof} (using different methods) confirmed this conjecture. Since then, various extensions and strengthenings of this result have been studied. For instance, an old conjecture of Thomassen asks for finding a \emph{balanced} subdivision of $K_t$ and a conjecture of Verstra\"ete asks for finding vertex-disjoint isomorphic subdivisions of $K_t$. Recently, these two conjectures have been resolved in \cite{fernandez2023disjoint, liu2017proof}.

In 2002, Verstra\"ete~\cite{verstraete2002note} made the bold conjecture that every $d$-regular graph contains an almost perfect packing of subdivisions. More precisely, given graphs $F$ and $G$, a $\TF$-packing in $G$ is a collection of pairwise vertex-disjoint copies of subdivisions of $F$ in $G$ (which are not required to be isomorphic). Verstra\"ete~\cite{verstraete2002note} observed that in any $d$-regular graph $G$, one can find a $\TF$-packing which covers about half of the vertices of $G$ (by repeatedly removing subdivisions of $F$ from $G$ until we obtain a graph containing no subdivisions of $F$) and made the following conjecture.

\begin{conj}[Verstra\"ete~\cite{verstraete2002note}, 2002]
\label{conj:packingsubdivisons}
For any graph $F$ and any $\eta > 0$, there exists an integer $d_0 = d_0(F, \eta)$ such that, for all $d \ge d_0$, every $d$-regular graph $G$ of order $n$ contains a $\TF$-packing that covers all but at most $\eta n$ vertices of $G$.
\end{conj}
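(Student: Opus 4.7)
The plan is to combine the two new tools announced in the introduction---finding nearly Hamilton cycles in sublinear expanders with good regularity properties, and extracting such expanders from general graphs---with a local embedding step that packs $\TF$'s into the resulting cycle structure. First, I would apply the expander-extraction technique to $G$ to obtain vertex-disjoint subgraphs $H_1,\ldots,H_m\subseteq G$, each an approximately $d_i$-regular sublinear expander with $d_i$ still large in terms of $|F|$ and $\eta$, whose union covers all but at most $\eta n/3$ of the vertices of $G$. The approximate regularity of the $H_i$ is essential because the nearly Hamilton cycle statement requires a controlled degree sequence to apply.

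Next, I would invoke the nearly Hamilton cycle theorem in each $H_i$ to obtain a cycle $C_i$ covering all but a negligible fraction of $V(H_i)$; with parameters chosen suitably, the cycles $C_i$ together miss at most $\eta n$ vertices of $G$. Then each $C_i$ is cut into arcs $A_1,A_2,\ldots$ of a fixed length $\ell=\ell(F,\eta)$ chosen much larger than $|V(F)|$ and $|E(F)|$. Inside each arc I would select $|V(F)|$ branch vertices $v_1,\ldots,v_k$ along the arc; the sub-arcs of $A_j$ between consecutive $v_i$'s directly supply subdivision paths for the edges $v_iv_{i+1}$ of $F$, and the ordering can be chosen to absorb as many edges of $F$ as possible this way. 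The remaining $O(1)$ edges of $F$ are realised via short additional paths, using either the few vertices of $H_i$ lying outside $C_i$ or, by slightly shortening the relevant sub-arcs, vertices from the bounded neighbourhood of $A_j$ in $H_i$; the needed short paths exist by sublinear expansion together with the fact that each cycle vertex retains high $H_i$-degree.

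The main obstacle, as I see it, is this last step: ensuring that the chord paths for distinct subdivisions are simultaneously pairwise vertex-disjoint and disjoint from the sub-arcs serving as skeleton paths. Each expander $H_i$ hosts on the order of $|V(H_i)|/\ell$ subdivisions, while the number of vertices off the cycle is only a tiny fraction of $|V(H_i)|$, so the chord-path searches for different $\TF$'s must be carefully coordinated. A natural attack is a greedy or batched embedding, in which each $\TF$ is built using only vertices within bounded graph distance of its own arc, together with a small absorbing reservoir of vertices set aside at the outset; sublinear expansion of $H_i$ should then guarantee each chord-path search succeeds with room to spare, while confining the search locally prevents interference between arcs that are far apart on $C_i$. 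Balancing the parameters so that the total number of vertices wasted on absorption, slack and subdivision overhead stays within the $\eta n$ budget is where the delicate quantitative work lies.
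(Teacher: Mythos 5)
The statement you are trying to prove is Verstra\"ete's conjecture itself, which requires the degree threshold $d_0$ to depend only on $F$ and $\eta$ --- in particular, to be a constant independent of $n$. Your proposal cannot deliver this. Every tool you invoke breaks down for constant $d$: the expander-extraction step needs $d \ge 2\log n$ just to get started, and the nearly Hamilton cycle statement in a sublinear expander needs $d$ to be at least a large power of $\log n$ (the random reservoir $V_0$ must inherit expansion, which forces polylogarithmic degree, and the connecting lemma through $V_0$ needs robustness parameter $s$ of polylogarithmic size, hence $d \ge (\log n)^{\Omega(1)}$). Indeed, a sublinear expander with bounded degree need not contain a cycle of length more than a $1/\mathrm{polylog}(n)$ fraction of its vertices, so the nearly Hamilton cycle step is simply false in the regime the conjecture demands. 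This is precisely why the paper proves only the weaker Theorem~\ref{thm:packingsubdivisons}, valid for $d \ge (\log n)^{130}$, and presents it as progress towards \Cref{conj:packingsubdivisons} rather than a proof of it. Your argument, if completed, would at best reprove that theorem; it does not and cannot establish the conjecture as stated.

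Even taken as a proof sketch of \Cref{thm:packingsubdivisons}, your Step~3 takes a substantially harder route than necessary and leaves its central difficulty unresolved. Since the subdivisions in a $\TF$-packing may use arbitrarily long paths and need not be isomorphic, one almost-spanning subdivision per expander suffices; there is no need to cut each cycle $C_i$ into $\Theta(|V(H_i)|/\ell)$ arcs and embed a separate small $\TF$ in each, which is exactly what creates the coordination problem you flag (many chord-path searches competing for the few off-cycle vertices). The paper instead reserves a small random set $R$ in each expander at the outset, finds a $K_{|V(F)|}$-subdivision inside $R$ via the Bollob\'as--Thomason/Koml\'os--Szemer\'edi theorem, and replaces a single path of that subdivision by a path containing the nearly Hamilton path, connecting the two via the random-set connecting lemma. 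This absorbs the long cycle into one subdivision and avoids any disjointness bookkeeping across many copies. Your "greedy or batched embedding with a small absorbing reservoir" is the step where the real work would lie, and as written it is not carried out.
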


Note that when $F$ is a complete graph of order two or three, 
Conjecture~\ref{conj:packingsubdivisons} reduces to the problem of covering the vertices 
of a regular graph with vertex-disjoint paths or cycles, respectively. 
In this sense, the conjecture can be viewed as a far-reaching approximate extension 
of Petersen's $2$-factor theorem (see~\cite{lovasz1979combinatorial}), 
which states that for $k\ge 1$, every $2k$-regular graph contains a $2$-factor. Problems involving covering the edges or vertices of a graph with paths/cycles have been extensively studied. Perhaps the most famous open problem in this area is the linear arboricity conjecture of Akiyama, Exoo, and Harary~\cite{akiyama1980covering} from 1980, which states that every graph with maximum degree $\Delta$ can be decomposed into at most $\lceil(\Delta + 1)/2\rceil$ path forests. 
This is related to another well-known conjecture, posed by Magnant and Martin~\cite{magnant2009note} in 2009, which states that the vertices of any $d$-regular graph of order $n$ can be covered by at most $n/(d + 1)$ vertex-disjoint paths. Indeed, the linear arboricity conjecture implies Magnant and Martin's conjecture for odd $d$ by noting that the largest path forest in the conjectured decomposition yields the desired collection of paths.
This latter conjecture is still open; see \cite{feige2022path,gruslys2021cycle,montgomery2024approximate} for some interesting recent progress towards it.
Even the much weaker conjecture by Feige and Fuchs~\cite{feige2022path} that every $d$-regular graph of order $n$ can be covered by at most $O(n/(d + 1))$ vertex-disjoint paths (which follows from the linear arboricity conjecture for all $d$) remains wide open.

Conjecture~\ref{conj:packingsubdivisons} was motivated by an old result of J{\o}rgensen and Pyber \cite{jorgensen1990covering} on covering the \emph{edges} of a graph with subdivisions which actually implies that Conjecture~\ref{conj:packingsubdivisons} holds if we do not require the subdivisions of $F$ to be vertex-disjoint (as observed by K\"uhn and Osthus in~\cite{kuhn2005packings}). In the last twenty years, there have been many results showing that Conjecture~\ref{conj:packingsubdivisons} holds in several natural special cases. A result of Kelmans, Mubayi and Sudakov~\cite{kelmans2001asymptotically} shows that Conjecture~\ref{conj:packingsubdivisons} holds when $F$ is a tree. In 2003, Alon~\cite{alon2003problems} proved that Conjecture~\ref{conj:packingsubdivisons} holds when $F$ is a cycle, using careful estimates on permanents. Alon~\cite{alon2003problems} also remarked that this result can be extended to the case when $F$ is a unicyclic graph but that it does not extend to the case of more complicated graphs $F$. 

In 2005, K\"uhn and Osthus~\cite{kuhn2005packings} proved that Conjecture~\ref{conj:packingsubdivisons} holds when $G$ is dense (i.e.,  $d = \Omega(n)$).
However, a significant obstacle in the way of proving the conjecture in full generality is that this proof relies on Szemer\'edi’s regularity lemma and the Blow-up lemma which only apply to dense graphs $G$. In this paper, we substantially improve on their results by showing that Conjecture~\ref{conj:packingsubdivisons} holds in the following stronger form for all graphs $G$ with at least polylogarithmic average degree.

\begin{theorem}
\label{thm:packingsubdivisons}
For any graph $F$ and large enough $n$, every $d$-regular graph $G$ of order $n$ with $d \ge (\log n)^{130}$ contains a $\TF$-packing that covers all but at most $\frac{n}{(\log \log n)^{1/30}}$ vertices of $G$.
\end{theorem}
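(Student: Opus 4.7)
The strategy is to combine the two headline tools announced in the abstract---extracting sublinear expanders with good regularity from a general graph, and building near-spanning cycles in such expanders (\Cref{nearly Ham informal})---into a three-step pipeline: first extract from $G$ a family of vertex-disjoint good expanders covering almost all of $V(G)$; then find a near-spanning cycle in each; finally convert each cycle into a $TF$-subdivision losing only a few extra vertices. The $d$-regularity of $G$ together with the polylogarithmic lower bound $d \ge (\log n)^{130}$ provides the ``fuel'' that makes the three steps go through at the $(\log\log n)^{-1/30}$ accuracy we require.

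\textbf{The three steps.}
In Step 1, we iteratively apply the new extraction procedure of the paper to $G$, removing vertex-disjoint sublinear expanders $H_1, H_2, \ldots$ satisfying the strong regularity hypotheses required by \Cref{nearly Ham informal}, and stop once the uncovered set has size at most $\frac{n}{2 (\log\log n)^{1/30}}$. The polylogarithmic lower bound on $d$ and the $d$-regularity of $G$ ensure that the residual graph retains enough average degree to sustain the extraction throughout. In Step 2, we apply \Cref{nearly Ham informal} in each $H_i$ to obtain a cycle $C_i$ missing at most a $o(1)$-fraction of $V(H_i)$, with parameters tuned so that the total number of missed vertices across all $i$ is at most $\frac{n}{2(\log\log n)^{1/30}}$. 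In Step 3, inside each $H_i$ we pick $|V(F)|$ branch vertices along $C_i$ and use the arcs of $C_i$ (possibly broken into finer pieces and re-stitched at the branch vertices according to a closed walk of $F$ that traverses each edge at least once, e.g.\ a DFS of a spanning tree of $F$ with detours for the non-tree edges) to realise most of the edges of $F$. The remaining edges of $F$ are realised by short internally disjoint paths in $H_i$, found via the standard expander linkage tool enabled by the sublinear expansion of $H_i$; these short paths contribute only polylogarithmically many internal vertices, which is negligible compared to the error budget.

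\textbf{Main obstacle.}
The principal difficulty is Step 1. Classical Koml\'os--Szemer\'edi extraction yields only a \emph{single} sublinear expander covering a constant fraction of the vertex set, whereas K\"uhn and Osthus's dense-graph approach~\cite{kuhn2005packings} relies on Szemer\'edi's regularity lemma and therefore requires $d = \Omega(n)$. To reach the target error $\frac{n}{(\log\log n)^{1/30}}$ with only polylogarithmic average degree, one must iterate the extraction many times while preserving both the sublinear expansion and the strong regularity properties demanded by \Cref{nearly Ham informal} at every stage; this is the chief new technical contribution announced in the abstract, and is where the specific exponent $130$ on the polylog is consumed. Step 2 is invoked as a black box, and Step 3 is a relatively routine---though bookkeeping-heavy---application of the expander path-finding toolkit, with the exact cycle-to-subdivision scheme depending only on the fixed graph $F$.
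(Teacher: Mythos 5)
Your Steps 1 and 2 follow the paper's strategy: \Cref{cor:expandercoverstrong} is precisely the iterated extraction-plus-refinement statement you describe for Step 1, and Step 2 invokes the near-Hamiltonicity result as intended. The genuine gap is Step 3. First, the closed-walk scheme does not produce a subdivision of $F$: a closed walk traversing every edge of $F$ must revisit vertices of $F$ (unless $F$ is a single cycle), and the several visits to a vertex $u$ of $F$ correspond to several \emph{distinct} vertices of $C_i$, which cannot all serve as the single branch vertex required for $u$; ``re-stitching'' them would mean identifying or joining distinct vertices of $H_i$, which destroys the subdivision structure (and a traversal argument fails outright for disconnected $F$, which the theorem allows). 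Moreover, any edge the walk traverses twice acquires two long internally disjoint arcs, and the surplus arc is not part of any $F$-subdivision, so its vertices go uncovered. Second, and more fundamentally, treating \Cref{nearly Ham informal} as a black box leaves no room for the linkage: once $C_i$ covers all but a $1/\log |V(H_i)|$ fraction of $H_i$, the short connecting paths for the remaining edges of $F$ must be internally disjoint from $C_i$ and from each other, but the leftover $o(|V(H_i)|)$ vertices carry no usable expansion, and the connection machinery (\Cref{lem:connecting}) requires routing through a random vertex subset that was reserved \emph{before} the cycle was built.

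The paper resolves both issues by proving a statement strictly stronger than \Cref{nearly Ham informal}, namely \Cref{lem:almostspannningtopological}: a random partition $V_0 = V' \cup V_1 \cup V_2$, $X_1, \ldots, X_t$, $R$ of $V(H)$ is fixed first; the nearly Hamilton path is constructed inside $V' \cup X$ only; a $K_{|V(F)|}$-subdivision is found inside the small reserved set $R$ via Bollob\'as--Thomason (\Cref{thm:BT}); and the long path is spliced into a \emph{single} subdivided edge of that subdivision using two connections routed through $V_1$ and $V_2$ (\Cref{claim:connectingverticesofY}). All the covering mass sits on one subdivided edge, so no closed walk of $F$ is needed and no branch vertex is duplicated. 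To repair your pipeline, Step 3 must be replaced by this reserve-then-splice construction (equivalently, the cycle must be built so as to avoid prescribed random sets), rather than by post-processing the cycle delivered by \Cref{nearly Ham informal}.
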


As mentioned earlier, our proof of \Cref{thm:packingsubdivisons} involves novel methods for finding nearly Hamilton cycles in sublinear expanders with good regularity and techniques for finding such expanders in general graphs. We give a detailed outline of our methods in Section~\ref{sec:proofsketch}. In particular, in \Cref{cor:expandercoverstrong}, we show that almost all vertices of every $d$-regular $n$-vertex graph with $d \ge 2 \log n$ can be covered by nearly-regular sublinear expanders (see \Cref{subsec:expanders} for a formal definition of these expanders). A key feature of this lemma is that it allows us to control the regularity properties of the expanders we obtain. As a consequence, \Cref{cor:findexpander} shows that every $n$-vertex graph with average degree at least $\Omega(d \log n)$ contains a sublinear expander with maximum degree at most $d$ and average degree extremely close to $d$. 
This result and our techniques for finding nearly Hamilton cycles in sublinear expanders have significant potential for further applications (see Section~\ref{sec:application:chords} for some examples).

In the dense case, where $d = \Omega(n)$, K\"uhn and Osthus~\cite{kuhn2005packings} showed that one can even find a perfect $\TK_t$-packing in $G$ when $t = 4$ and $t = 5$. It follows from the work of Gruslys and Letzter \cite{gruslys2021cycle} towards the aforementioned conjecture of Magnant and Martin \cite{magnant2009note} that this holds for $t = 2$ and $t = 3$. K\"uhn and Osthus posed the question of whether this result holds for $t \ge 6$.  Recently, the authors~\cite{LetzterMetukuSudakovdense} answered this question positively using techniques very different from those used in this paper.  It is known that for all $t \ge 3$, we need $d \ge \sqrt{n/2}$ to have a perfect $\TK_t$-packing in $G$; see~\cite{kuhn2005packings}. It would be interesting to determine the exact degree threshold at which a perfect $\TK_t$-packing can be guaranteed in regular graphs.

\subsection{Organization of the paper} 

The rest of the paper is organized as follows. In Section~\ref{sec:proofsketch} we give a detailed sketch of our proof of Theorem~\ref{thm:packingsubdivisons}. In Section~\ref{sec:prelim}, we introduce the notation and the two notions of expansion used throughout the paper, along with the required probabilistic tools. In Section~\ref{sec:packingwithexpanders}, we develop methods for finding expanders with good regularity properties and prove our first key lemma, \Cref{cor:expandercoverstrong}, which shows that one can cover nearly all vertices of a regular graph with such expanders. In Section~\ref{sec:connectingthroughrandom}, we prove \Cref{lem:connecting} which shows that a collection of vertex-disjoint pairs of vertices (satisfying a certain expansion property) can be joined using vertex-disjoint paths through a random vertex subset of a sublinear expander. In Section~\ref{sec:almostspanningFsubdivisioninexpander}, we develop methods for constructing nearly Hamilton cycles in sublinear expanders and use them to prove our second key lemma, \Cref{lem:almostspannningtopological}, which shows that any sufficiently regular sublinear expander contains an almost-spanning $F$-subdivision. In Section~\ref{sec:proofofmainresult}, we put everything together to prove \Cref{thm:packingsubdivisons}. In Sections~\ref{subsec:cyclepartitions} and \ref{subsec:cyclewithmanychords}, we present two additional applications of our methods: one addressing the conjecture of Magnant and Martin~\cite{magnant2009note}, and the other concerning the existence of a cycle with many chords~\cite{chen1996proof}.

\section{Proof sketch}
\label{sec:proofsketch}
In this section, we sketch the main ideas in our proof of Theorem~\ref{thm:packingsubdivisons}. Let $F$ be a given graph and let $G$ be a $d$-regular graph of sufficiently large order $n$.  
Our strategy involves covering nearly all vertices of
$G$ with vertex-disjoint sublinear expanders that are close to regular and finding an almost-spanning subdivision of
$F$ within each such expander. Specifically, we proceed as follows.
\begin{enumerate}
	\item[\textbf{Step 1.}] \label{packingexpanders} Find a collection $\mathcal{H}$ of vertex-disjoint sublinear expanders with good regularity properties (that is, with average degree very close to the maximum degree), covering nearly all vertices of $G$.

	 \item [\textbf{Step 2.}] Show that each expander $H \in \cH$ contains a nearly Hamilton path $P_H$.
	 \item [\textbf{Step 3.}]\label{almostspanningsubdivision} Find a subdivision of $F$ in each expander $H \in \cH$ containing the nearly Hamilton path $P_H$. 
\end{enumerate}

It is easy to see that the $F$-subdivisions given by \textbf{Step 3} together cover nearly all vertices of $G$. A key contribution of our paper is the development of novel techniques for finding nearly Hamilton paths and cycles in sublinear expanders with good regularity properties. Such expanders are also useful for various other applications. The meta-problem of finding sublinear expanders with good regularity properties was therefore raised in \cite{chakraborti2024edge} by Chakraborti, Janzer, Methuku and Montgomery. Our work here also makes progress towards this problem.

A natural strategy for constructing a nearly Hamilton path in a sublinear expander $H \in \cH$ is to start with a small collection of vertex-disjoint paths $P_1, \ldots, P_r$ in $H$, that together cover almost all vertices in $H$, and connect these paths through a small random set $V_0 \in V(H)$ of reserved vertices. Here we need $V_0$ to be small enough because the vertices of $V_0$ that are not used in the connecting paths are left uncovered by the nearly Hamilton path that we aim to find.

More precisely, we start by partitioning $V(H)$ into random sets $V_0, X_1, \ldots, X_t$ such that $V_0 = o(|V(H)|)$ and the sets $X_1, \ldots, X_t$ have roughly the same size. Then, we obtain the paths $P_1, \ldots, P_r$, by finding a largest matching $M_i$ between each pair of sets $X_i, X_{i+1}$, $1 \le i \le t-1$, taking the union of these matchings, and letting $P_1, \ldots, P_r$ be the connected components in the union that intersect all sets $X_i$ (so that each of the paths $P_1, \ldots, P_r$ contains exactly one vertex from each $X_i$).
In order for the matchings $M_i$ to be large enough so that the paths $P_1, \ldots, P_r$ cover nearly all of the vertices of $H$, it is crucial that $H$ has very good regularity properties. In particular, we need the following property for all the sublinear expanders $H \in \cH$. 

\begin{enumerate}[label = $\mathbf{(P)}$]
    \item \label{Hregularity} 
		For every $H \in \cH$ with average degree at least $d (1 - \eps)$ and maximum degree $d$, we have $\eps \ll \frac{1}{t}$.
\end{enumerate}

Indeed, using Vizing's theorem and standard concentration inequalities, it is easy to show that with high probability $|M_i| \ge |X_i|(1-2\eps)$ for each $1 \le i \le t-1$. This implies that the paths $P_1, \ldots, P_r$ cover all but at most $2\eps |V(H) \setminus V_0|$ vertices from each $X_i$, so that up to $2\eps t$ proportion of the vertices in $V(H) \setminus V_0$ may be left uncovered by the paths $P_1, \ldots, P_r$. To make this a small enough proportion of the vertices of $H$, we need $\eps \ll \frac{1}{t}$ as stated in \ref{Hregularity}. 

For connecting the paths $P_1, \ldots, P_r$ using paths through the random set $V_0$, we use some ideas from recent work of Buci\'c and Montgomery~\cite{bucic2022erdos} and Tomon~\cite{tomon2024robust}, showing that random vertex subsets in sublinear expanders with polylogarithmic average degree are likely to inherit some expansion properties. These ideas were slightly refined in~\cite{chakraborti2024edge} (see, e.g., \cite[Lemma 8]{chakraborti2024edge}) to show that a collection of vertex-disjoint pairs of vertices can be connected through a random subset of vertices of a sublinear expander (using vertex-disjoint paths) provided that the size of the random subset is sufficiently large compared to the number of pairs of vertices. Crucially, this means that for connecting the paths $P_1, \ldots, P_r$ through the random set $V_0$, we need the sets $X_1, \ldots, X_t$ to be much smaller than $V_0$. Since $|X_i| \approx |V(H)|/t$ and $V_0$ must be small, this implies that $1/t$ must be very small, which, in turn, requires $\eps$ to be small enough for \ref{Hregularity} to hold. This explains why we need the sublinear expanders $H \in \cH$ to have extremely good regularity properties. 

In any $d$-regular graph $G$ on $n$ vertices, it is easy to find a sublinear expander with an average degree at least $d(1 - 2\lambda \log n)$ and an expansion factor of $\lambda = O\left(\frac{1}{\log n}\right)$, using standard methods such as iteratively removing sparse cuts. In fact, one can cover nearly all vertices of our $d$-regular graph $G$ with such sublinear expanders. By choosing $\lambda$ sufficiently small, these expanders indeed exhibit the desired strong regularity properties. However, these expanders may contain only a small number of vertices of $G$, so the expansion characterized by this $\lambda$ might be too weak for following the aforementioned strategy to complete \textbf{Step 2}. To address this, we introduce a \emph{refining procedure}, detailed at the end of this section, that begins with these expanders and gradually enhances their expansion while essentially preserving their strong regularity properties. Roughly speaking, this enables us to prove the following lemma; for a precise statement, see \Cref{cor:expandercoverstrong}.

\begin{lemma}
\label{refininglemma}
For any $c_1 \ge 2$, there is a constant $c_2 > 0$ such that the following holds. Nearly all vertices of every $d$-regular graph $G$ with sufficiently large degree can be covered by vertex-disjoint (robust) sublinear expanders $H$ with an average degree of at least $d(1 - \frac{1}{(\log |V(H)|)^{c_1}})$ and an expansion factor of $\frac{1}{(\log |V(H)|)^{c_2}}$.
\end{lemma}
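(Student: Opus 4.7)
My plan is to build the cover by an iterative sparse-cut-removal procedure on $G$ in which the splitting parameter is adapted to the current piece size, followed by discarding an $o(n)$-sized exceptional pool to ensure the average-degree guarantee. Conceptually, I split the argument into a \emph{global phase} using a uniform parameter $\lambda_0$, producing pieces with weak but size-independent expansion and strong regularity, and a \emph{refinement phase} that independently re-splits each surviving piece $H$ with a piece-size-adapted parameter $\lambda_H=1/(\log|V(H)|)^{c_2}$ to boost its expansion up to the required rate.

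In the global phase, starting from $H_0:=G$ and using parameter $\lambda_0:=1/(\log n)^{c_2+1}$, while some current piece $H$ contains a cut $(X,Y)$ with $|X|\le|Y|$ (both sides above a small size threshold) and $e_H(X,Y)\le\lambda_0|X|$, I would replace $H$ by $H[X]$ and $H[Y]$. By the standard Koml\'os--Szemer\'edi halving argument (each vertex lies on the smaller side of at most $\log_2 n$ splits, so $\sum|X|\le n\log n$), the total number of removed edges is $O(\lambda_0 n\log n)=O(n/(\log n)^{c_2})$, a negligible fraction of $dn$ whenever $d$ exceeds a polylogarithmic threshold. In the refinement phase, I would iterate sparse-cut removal inside each surviving piece with the size-adapted parameter $\lambda_H$, recurse inside the resulting sub-pieces with the correspondingly larger parameter, and continue until no sparse cut remains; this terminates because piece sizes strictly decrease at every split, and by construction every final piece is a $1/(\log|\cdot|)^{c_2}$-expander.

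To bound the aggregate edge loss across the refinement, I would group splits dyadically by the scale $j$ with $|V(H)|\in[n/2^{j+1},n/2^j]$. The per-split cost per unit of smaller side at scale $j$ is at most $(\log n-j)^{-c_2}$, and a single-scale halving argument (if $v$ is on the smaller side of a scale-$j$ split then its piece size halves and it leaves scale $j$) bounds the total smaller-side mass at scale $j$ by $n$. Summing over scales yields total loss $O\!\bigl(n\sum_{k\ge 1}k^{-c_2}\bigr)=O(n)$ for $c_2>1$, i.e.\ on average $O(1)$ edges of $G$ are removed per vertex across both phases. Since $\sum_H L_H=O(n)$, where $L_H$ counts the edges of $G$ with exactly one endpoint in $V(H)$, a Markov argument shows that the set of vertices lying in pieces violating the local budget $L_H\le d|V(H)|/(\log|V(H)|)^{c_1}$ has size $O\!\bigl((\log n)^{c_1}\cdot n/d\bigr)=o(n)$ provided $d\gg(\log n)^{c_1}$.

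The main obstacle is the interaction between the growing parameter $\lambda_H$ and the shrinking local degree budget: as pieces shrink, both the required expansion $1/(\log|V(H)|)^{c_2}$ and the allowed per-vertex edge loss $d/(\log|V(H)|)^{c_1}$ grow, and a naive telescoping is not sharp enough unless $c_2$ is chosen sufficiently large in terms of $c_1$. I would handle this by setting $c_2:=c_1+c_0$ for an absolute constant $c_0>1$, ensuring both that the series $\sum k^{-c_2}$ converges and that the number of vertices whose local budget fails at some scale, together with the initial discarded set, is $o(n)$ by a layered Markov argument. Putting the two phases together yields a partition of all but $o(n)$ vertices of $G$ into vertex-disjoint sublinear expanders with expansion factor $1/(\log|V(H)|)^{c_2}$ and average degree at least $d(1-1/(\log|V(H)|)^{c_1})$, establishing the informal lemma (which corresponds to \Cref{cor:expandercoverstrong}).
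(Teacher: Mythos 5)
Your high-level plan (iterated sparse-cut removal with a size-adapted splitting parameter, finer expansion for smaller pieces) is in the spirit of the paper's refining procedure, but the accounting you propose for the per-piece degree guarantee does not work, and this is the heart of the lemma.

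First, a normalization problem that matters quantitatively. In this paper a $\lambda$-expander must satisfy $e(U,\overline U)\ge \lambda\, d(H)\,|U|$, so your cut threshold has to be $\approx \lambda_H d|X|$, not $\lambda_H|X|$; otherwise the final pieces only have vertex-expansion rate $\approx 1/(d(\log|V(H)|)^{c_2})$, which is not ``expansion factor $1/(\log|V(H)|)^{c_2}$'' and is useless for the robust sublinear expansion the applications need. With the correct threshold, your dyadic accounting gives total edge loss $\Theta\bigl(dn\sum_{k\ge \log d} k^{-c_2}\bigr)=\Theta\bigl(dn(\log d)^{1-c_2}\bigr)$ rather than $O(n)$.

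This breaks the Markov step, which is the genuine gap. A bad final piece ($L_H> d|V(H)|/(\log|V(H)|)^{c_1}$) only gives $|V(H)|< L_H(\log n)^{c_1}/d$, so the bad vertex mass is bounded by $(\log n)^{c_1}d^{-1}\sum_H L_H = O\bigl(n(\log n)^{c_1}(\log d)^{1-c_2}\bigr)$. In the regime the paper actually needs — $d$ polylogarithmic in $n$ (the formal version, \Cref{cor:expandercoverstrong}, assumes only $d\ge 2\log n$, and \Cref{thm:packingsubdivisons} uses $d=(\log n)^{130}$) — we have $\log d=\Theta(\log\log n)$, so this bound exceeds $n$ for every fixed $c_2$; no choice $c_2=c_1+c_0$ helps. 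The deeper reason is that a global loss bound cannot be localized: the edges cut at coarse scales land on the boundaries of the final (possibly very small) pieces, and nothing prevents the loss from concentrating on pieces whose absolute budget $d|V(H)|/(\log|V(H)|)^{c_1}$ is tiny. Your ``layered Markov'' suffers the same defect, since you have no control on how $\sum_H L_H$ distributes across scales.

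The paper avoids averaging altogether: the per-piece degree guarantee is obtained \emph{constructively}. \Cref{lem::findexpander} outputs a single expander whose average degree is at least $(1-2\lambda\log n)$ times that of its host (because the procedure only passes to subgraphs whose average degree drops by a factor $(1-2\lambda)$ at most $\log n$ times), and the maximality argument in \Cref{lem:packingoneround} shows the \emph{uncovered} remainder retains average degree $\ge d(1-8\eps)$, so one can keep extracting good pieces. The resulting loss per refinement round is \emph{multiplicative} (a factor $(\log n)^{28\alpha}$ on $\eps$), which is only sustainable because the paper uses a doubly-exponential scale schedule $\log n_{i+1}=(\log n_i)^{\gamma}$ with merely $O(\log\log\log n)$ rounds, tuned so that $\eps_i(\log n_{i+1})^{28\alpha}=\eps_{i+1}$ stays within the budget at the new scale. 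Your dyadic schedule has $\Theta(\log n)$ rounds, which is incompatible with any per-round multiplicative loss, and your additive alternative fails as above. To repair your argument you would need to replace the global-loss-plus-Markov step with a mechanism that certifies the degree of each surviving piece directly, which is essentially what the paper's \Cref{lem:expandercoverweak} and the threshold schedule accomplish.
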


This lemma shows that by taking $c_1$ sufficiently large, we can obtain sublinear expanders $H$ with sufficiently strong regularity (although at the expense of slightly weaker expansion). Unfortunately, this improved regularity of our expanders $H$ is still insufficient to satisfy \ref{Hregularity} because the methods of~\cite{bucic2022erdos, chakraborti2024edge} require each of the sets $X_1, \ldots, X_t$ to be significantly smaller than $V_0$ (depending on the parameter $c_1$) -- see, e.g., the proof of \cite[Lemma 8]{chakraborti2024edge}. 
Therefore, \Cref{refininglemma} still does not allow us to directly connect the paths $P_1, \ldots, P_r$ through the random set $V_0$, making the construction of a nearly Hamilton path challenging with the strategy described above. To overcome this difficulty, our main idea is to iteratively connect the paths $P_1, \ldots, P_r$ through $V_0$ using a procedure that, in each iteration, either directly connects a good proportion of the paths or identifies a well-expanding subset of their leaves (see Figure~\ref{fig:cherryvspaths}). More precisely, in each iteration of the procedure, we first greedily connect the paths through $V_0$
using as many vertex-disjoint paths of length two as possible (a similar idea for connecting paths, in a different context, was recently used in~\cite{montgomery2024approximate}).  Crucially, when it is no longer possible to connect using vertex-disjoint paths of length two, we can identify a small subset $S$ of the leaves of the paths $P_1, \ldots, P_r$ that expands very well into $V_0$. The improved regularity of expanders $H$ (provided by \Cref{refininglemma}) together with a variant of a lemma from~\cite{letzter2024separating} is now sufficient to connect the paths whose leaves lie in the subset $S$ (through $V_0$). This allows us to join a good proportion of the paths $P_1, \ldots, P_r$ through $V_0$ in each iteration. By iterating this process $\Theta(\log |V(H)|)$ times, we eventually obtain a nearly Hamilton path $P_H$ in $H$, completing~\textbf{Step~2}. 

\begin{figure}[h]
    \centering
	\includegraphics[width=0.85 \textwidth]{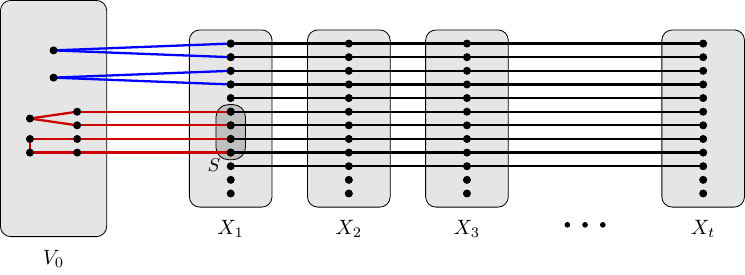}
    \caption{If we cannot find sufficiently many connecting paths of length two (shown in blue), we identify a set $S$ of leaves that expands very well into $V_0$ and connect vertices in $S$ using paths (shown in red) through $V_0$.}
\label{fig:cherryvspaths}
\end{figure}

Since the procedure requires $\Theta(\log |V(H)|)$ iterations to connect the paths $P_1, \ldots, P_r$ through $V_0$, the sets $X_1, \ldots, X_t$ must be at least $\Theta(\log |V(H)|)$ times smaller than $V_0$. This constraint demands an even smaller choice of $\eps$ to satisfy \ref{Hregularity}. Nevertheless, by selecting $c_1$ sufficiently large in \Cref{refininglemma}, we can still apply the iterative procedure to connect the paths and construct a nearly Hamilton path, even when the expanders $H$ possess slightly weaker expansion properties. This yields the following lemma (see \Cref{lem:almostspannningtopological} for the precise formulation).

\begin{lemma}
\label{nearly Ham informal}
Let $c > 0$ be fixed, let $0 < \eps < \frac{1}{(\log n)^4}$, let $n$ be sufficiently large, and let $d \geq (\log n)^{10c+51}$. Then, every $n$-vertex (robust) sublinear expander $H$ with an expansion factor of $\frac{1}{(\log n)^c}$, average degree at least $d(1-\eps)$ and maximum degree at most $d$, contains a cycle (and thus a path $P_H$) of length at least $n - \frac{n}{\log n}$.  
\end{lemma}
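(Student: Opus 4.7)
The plan is to follow the blueprint sketched in \Cref{sec:proofsketch}. Choose $t = (\log n)^{5c+25}$ or so, and fix a random partition $V(H) = V_0 \sqcup X_1 \sqcup \cdots \sqcup X_t$ with $|V_0| \approx n / (\log n)^2$ and $|X_i| \approx (n - |V_0|)/t$. By standard Chernoff concentration, with high probability each $X_i$ has the right size, and — using the kind of random-subset expansion transfer that underlies the arguments of Buci\'c--Montgomery and Tomon (as in \cite[Lemma 8]{chakraborti2024edge}) — the set $V_0$ inherits enough expansion from $H$ that small vertex subsets of $H$ with suitable expansion into $V(H)$ expand correspondingly into $V_0$. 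Observe also that because the average degree of $H$ is at least $d(1-\eps)$ and the maximum degree is at most $d$, all but an $O(\sqrt{\eps})$-fraction of vertices have degree at least $d(1-\sqrt{\eps})$, i.e., the graph is nearly $d$-regular.

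Next I construct nearly-spanning paths via matchings. For $1 \le i < t$, let $G_i$ be the bipartite subgraph of $H$ between $X_i$ and $X_{i+1}$. By the near-regularity of $H$ and Chernoff concentration, $G_i$ has maximum degree at most $(1+o(1)) d/t$ and all but $O(\sqrt{\eps}) |X_i|$ vertices have degree at least $(1 - O(\sqrt{\eps})) d/t$. By König's edge-colouring theorem applied to $G_i$, there is a matching $M_i$ covering all but at most $O(\sqrt{\eps}) |X_i|$ vertices of each side. Form $M = M_1 \cup \cdots \cup M_{t-1}$: each vertex in $V(H) \setminus V_0$ has degree at most two in $M$, so $M$ decomposes into paths and cycles. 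Let $P_1, \ldots, P_r$ be the ``full'' path components that meet every $X_i$; the other components of $M$ together miss at most $O(\sqrt{\eps}\, t) \cdot n = o(n / \log n)$ vertices of $V(H) \setminus V_0$, given the choice of $\eps < (\log n)^{-4}$ and $t$.

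The main obstacle is connecting $P_1, \ldots, P_r$ into a single near-Hamilton path using vertex-disjoint paths through $V_0$, since the random-subset connecting lemma alone cannot handle $r$ so much larger than $|V_0|$. I do this iteratively: in each round, first greedily select a maximal family of vertex-disjoint length-two paths of the form $x$--$v$--$y$, with $v \in V_0$ and $x,y$ endpoints of distinct current path components (as in \cite{montgomery2024approximate}). If this family merges a constant fraction of components, proceed to the next round. Otherwise, maximality forces the set $S$ of still-unmerged leaves to have small neighbourhood into the unused portion of $V_0$; combining the inherited expansion of $V_0$ with a variant of the separation-style lemma of \cite{letzter2024separating}, $S$ itself must be small, and then the connecting lemma in the spirit of \cite[Lemma~8]{chakraborti2024edge} links the leaves in $S$ pairwise through the remaining vertices of $V_0$. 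Each round at least halves the number of components, so after $O(\log n)$ rounds we obtain a single path; because each round uses only a tiny fraction of $V_0$, the total usage stays comfortably below $|V_0|$. A final application of the connecting lemma closes the path into a cycle. The uncovered vertices are: matching losses ($o(n/\log n)$), non-full $M$-components ($o(n/\log n)$), and the unused portion of $V_0$ ($\le |V_0| = O(n / \log^2 n)$), summing to at most $n/\log n$.

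The principal technical difficulty is the bookkeeping in the iterative connection step: each round must simultaneously use only a small portion of $V_0$, preserve the expansion required for future rounds, and halve the number of components. This is what forces $t$ (hence the required regularity $\eps$, hence the minimum degree $d$) to be polylogarithmic of high enough exponent, and is precisely why the hypothesis takes the form $d \ge (\log n)^{10c + 51}$.
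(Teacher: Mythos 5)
Your overall architecture matches the paper's proof of \Cref{lem:almostspannningtopological} (specialised to $F = K_3$): a random partition into a connecting set $V_0$ and slices $X_1, \ldots, X_t$, near-perfect matchings between consecutive slices whose union yields paths $P_1, \ldots, P_r$, and an iterative connection through $V_0$ that alternates greedy length-two connections with an application of the connecting lemma to a well-expanding subset of leaves. The gap is quantitative but genuine: your choice of $t$ makes the covering step fail.

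The constraint you must satisfy is exactly property \ref{Hregularity} from \Cref{sec:proofsketch}: the fraction of each $X_i$ left unmatched at each level, multiplied by the number of levels $t$, must be $O(1/\log n)$, because an unmatched vertex at level $i$ truncates a would-be path, and the number of surviving full paths satisfies only $r \ge |X_1|\bigl(1 - (\text{loss per level})\cdot t\bigr)$. You take $t = (\log n)^{5c+25}$ while $\eps$ may be as large as $(\log n)^{-4}$. Since the average-degree deficiency alone caps the guaranteed matching size at roughly $e(G_i)/\Delta(G_i) \approx |X_i|(1-\eps)$, the loss per level is at least of order $\eps$, so $\eps t$ can be as large as $(\log n)^{5c+21} \gg 1$ and the lower bound on $r$ is vacuous: the full paths might cover nothing. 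A value of $t$ that large is what one would need to connect all $r$ pairs through $V_0$ in a single application of the connecting lemma; the entire point of the iterative scheme (which you correctly adopt) is that it tolerates $|X_i|$ only moderately smaller than $|V_0|$, so that $t$ can be taken to be about $(\log n)^3$, as in the paper, where $t = \frac{1}{6}(\log n)^3 - \cdots$ and $\eps_0 t \approx \frac{1}{6\log n}$. Relatedly, your per-level loss of $O(\sqrt{\eps})$, obtained by first passing to the vertices of degree at least $d(1-\sqrt{\eps})$, is too lossy even for $t \approx (\log n)^3$, since $\sqrt{\eps}\, t$ can then be of order $\log n$; bound the matching size directly by $e_H(X_i, X_{i+1})/(\Delta(H[X_i,X_{i+1}])+1)$, which gives loss $O(\eps + d^{-1/4} + n^{-1/3})$ per level as in the paper's \Cref{claim:findingpaths}. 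With $t \approx (\log n)^3$ and that bound, your argument goes through and coincides with the paper's.
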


Next, we \emph{absorb} the vertices of the path $P_H$ into a subdivision of $F$ that is found within a small random subset $R$ of vertices in each of our expanders $H$ (as illustrated in Figure~\ref{fig:almostspanningsubdivision}). (Here, we use the aforementioned techniques for connecting vertex pairs, together with a classical result for finding subdivisions~\cite{bollobas1998proof, komlos1996topological}.) This yields the desired almost-spanning subdivision of $F$ in each of the expanders $H \in \cH$, thus completing \textbf{Step 3} and the proof of Theorem~\ref{thm:packingsubdivisons}.

As discussed earlier, sublinear expanders with strong regularity properties hold potential for a wide range of applications (some of which are discussed in \Cref{sec:application:chords}). Given this independent interest, we conclude this section with a sketch of the proof of \Cref{refininglemma}, showing that almost all vertices of a nearly regular graph can be covered with vertex-disjoint nearly regular expanders.
We note that this lemma, together with known methods for `regularising' a graph, imply the existence of a nearly regular expander in every graph with sufficiently large average degree (see \Cref{cor:findexpander}), a potentially very useful tool for applications.

\vspace{-4 mm}
\paragraph{A refining procedure.}  Let $G$ be a graph with an average degree at least $d(1 - \eps)$ and a maximum degree $d$. By repeatedly removing sparse cuts, it is easy to find an expander in $G$ with an expansion factor of $\lambda = O(\frac{1}{\log n})$ and an average degree of at least $d(1 - 2 \lambda \log n)$. By iteratively finding such expanders, removing their vertices and continuing with the remaining graph, we can cover nearly all vertices
of the graph $G$ using a collection $\cH_1$ of vertex-disjoint expanders with an average degree of at least $d(1 - \eps \log n)$ and an expansion factor of $\frac{\eps}{\log n}$ (see \Cref{lem:expandercoverweak}). 
However, since these expanders may contain very few vertices compared to $n = |V(G)|$, the expansion factor $\frac{\eps}{\log n}$ may represent only very weak expansion. Such a weak expansion is insufficient for most applications; in particular, we require a much stronger expansion to construct a nearly Hamilton cycle within these expanders. Let $C$ be a large constant. If $\eps = \frac{1}{(\log n)^{C-1}}$, then the expanders in $\cH_1$ have an average degree of at least $d\big(1 - \frac{1}{(\log n)^{C-2}}\big)$ and an expansion factor of $\frac{1}{(\log n)^C}$. This means that, although the expanders in $\cH_1$ may exhibit very weak expansion, they possess excellent regularity properties. We introduce a refining procedure that begins with the expanders in $\cH_1$ and iteratively improves their expansion while largely preserving their strong regularity properties, ultimately producing expanders with both good expansion and regularity properties.

The main idea of this refining procedure is as follows. We carefully choose certain `thresholds' $n_t \leq \dots \leq n_1 = n$ for vertex sizes, where $\log n_{i+1} = (\log n_i)^{\frac{C-2}{C-1}}$, and regularity thresholds $\eps_1 \leq \dots \leq \eps_t$, where $\eps_i = \frac{1}{(\log n_i)^{C-2}} = \frac{1}{(\log n_{i+1})^{C-1}}$. We iteratively construct a sequence of collections $\mathcal{H}_i$, $1 \leq i \leq t$, of expanders where each collection covers nearly all vertices of $G$, such that for each $i$, the expanders in $\mathcal{H}_{i+1}$ have slightly better expansion and only slightly weaker regularity properties compared to the expanders in the collection $\mathcal{H}_i$. We achieve this by `refining' any expander $H \in \mathcal{H}_i$ that has too few vertices (using
the aforementioned fact that any graph with average degree at least $d(1 - \eps)$ and maximum degree $d$ can be covered with vertex-disjoint expanders having an average degree of at least $d(1 - \eps \log n)$ and an expansion factor of $\frac{\eps}{\log n}$). More precisely, we replace any expander $H \in \mathcal{H}_i$ that has fewer than $n_{i+1}$ vertices with a new collection $\mathcal{H}_{i+1}(H)$ of vertex-disjoint expanders that have improved expansion (and only slightly weaker regularity properties) covering nearly all vertices of $H$, and
we let $\mathcal{H}_{i+1} = \bigcup_{H \in \mathcal{H}_i} \mathcal{H}_{i+1}(H)$ be the resulting collection of expanders. 
Crucially, the parameters are set up so that, if $H \in \cH_i$ and $|V(H)| < n_{i+1}$, then the expanders in $\cH_{i+1}(H)$ have an \emph{improved} expansion factor of $\frac{1}{(\log n_{i+1})^C}$ while maintaining an average degree at least $d(1 - \eps_{i+1})$. 
By repeating this refining procedure we eventually obtain a collection $\cH_t = \cH$ of expanders that cannot be refined further. This means that for every expander $H \in \cH$, there is a step $j$ at which it was last refined, where $n_j > |V(H)| \ge n_{j+1}$ and $H \in \cH_j \setminus \cH_{j-1}$. It is then easy to see that $H$ has an expansion factor of $\frac{1}{(\log n_{j})^C} = \frac{1}{(\log n_{j+1})^{C(C-1)/(C-2)}} \ge \frac{1}{(\log |V(H)|)^{C(C-1)/(C-2)}}$, and an average degree of at least $d(1 - \eps_j) = d(1 - \frac{1}{(\log n_j)^{C-2}}) \ge d(1 - \frac{1}{(\log |V(H)|)^{C-2}})$, as desired, proving \Cref{refininglemma}.    

\section{Preliminaries}
\label{sec:prelim}

\subsection{Notation} We write $c = a \pm b$ if $a-b \le c \le a+b$.
For a set $U \subseteq V(G)$, let $\comp{U}$ denote the set $V(G) \setminus U$. For a set $S \subseteq V(G)$, let $G \setminus S$ denote the subgraph of $G$ induced by $V(G) \setminus S$.
By $e(G)$, we denote the number of edges of $G$, and for $S\subseteq V(G)$, we denote by $e_G(S)$ the number of edges of $G$ induced by $S$. For disjoint sets $A,B\subseteq V(G)$, let $e_G(A,B)$ denote the number of edges of $G$ 
with one endpoint in $A$ and the other in $B$, and let $G[A,B]$ denote the bipartite 
subgraph of $G$ with parts $A$ and $B$.

For a graph $G$, we denote by $d(G)$ its average degree, by $\delta(G)$ its minimum degree and by $\Delta(G)$ its maximum degree. For a vertex $v \in V (G)$, we denote its degree by $d_G(v)$ and the set of its neighbours by
$N_G(v)$. For a set of vertices $X \subseteq V(G)$, and an integer $i \ge 0$, let $N_G(X)$ denote the set of vertices outside of $X$ that are adjacent to at least one vertex of $X$, and we write $B^i_G(X)$ to be the set of vertices at distance at most $i$ from $X$ in $G$. We often omit subscripts and write, e.g. $N(X)$ instead of $N_G(X)$, if it is clear from the context which graph we are working with. Given vertices $x, y$, an $(x,y)$-path is a path from $x$ to $y$.

All logarithms in this paper are base $2$. When dealing with large numbers, we often omit floor and ceiling signs whenever they are not crucial.

\subsection{Expansion} \label{subsec:expanders}
In this paper, we use two different notions of expansion that are closely related to each other. Here, we introduce these two notions and then prove a lemma that relates them.
We will use the following standard notion of edge expansion.

\begin{defn}[$\lambda$-expander]
	Let $\lambda > 0$. We say that a graph $H$ is a $\lambda$-expander if every set $U \subseteq V(H)$ with $|U| \le \frac{1}{2}|V(H)|$ satisfies $e(U, \overline{U}) \ge \lambda d(H) |U|$.  
\end{defn}

We will mostly use the following notion of vertex expansion, which is a generalization of the notion of expansion introduced by Buci\'c and Montgomery~\cite{bucic2022erdos}. Related notions of expansion were recently developed by Shapira and Sudakov~\cite{shapira2015small}, by Haslegrave, Kim, and Liu~\cite{haslegrave2022extremal}, and by Sudakov and Tomon~\cite{sudakov2022extremal}. 

\begin{defn}[$(\eps, c, s)$-expander]
\label{defn:robust-sublinear-expansion} 
	An graph $H$ is called an $(\eps, c, s)$-expander if, for every $U\subseteq V(H)$ and $F\subseteq E(H)$ with $1\le |U|\leq \frac23 |V(H)|$ and $|F|\leq s|U|$, we have
	\begin{equation}
		|N_{H-F}(U)|\geq \frac{\eps}{(\log |V(H)|)^c} \cdot |U|.\label{eqn:expands}
	\end{equation}
\end{defn}

Expanders as in \Cref{defn:robust-sublinear-expansion} are sometimes called \emph{robust} sublinear expanders, referring to the graph $F$ (Koml\'os and Szemer\'edi's definition of sublinear expanders did not include this feature). Notice that the expander becomes more `robust' the larger $s$ is.

The main difference between the notion of expansion we use (as given in \Cref{defn:robust-sublinear-expansion}) and  the one used in \cite{bucic2022erdos} is that we use an additional parameter $c$ for our expanders (which is set to $2$ in \cite{bucic2022erdos}). This parameter measures the rate of expansion, where larger values of 
$c$ correspond to weaker expansion. Our approach in this paper relies crucially on expanders with excellent regularity properties, specifically those with average degree very close to maximum degree. In \Cref{lem:expandercoverstrong}, we construct such expanders, which allows us to enhance their regularity by increasing the parameter $c$ which, however, leads to a slight weakening of the expansion. 

The following lemma is standard and connects the two notions of expansion for (almost) regular expanders.

\begin{lemma}
	\label{lem:relateexpanders}
	Let $0 < \eps \le 1/4$, $c > 0$, let $n$ be large and let $\lambda = \frac{1}{(\log n)^c}$. Let $H$ be an $n$-vertex $\lambda$-expander with $\Delta(H) \le d$ and $d(H) \ge d(1 - \eps)$. Then, $H$ is a $(\frac{1}{8}, c, \frac{\lambda d}{4})$-expander. 
\end{lemma}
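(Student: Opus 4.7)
The plan is to derive the vertex-expansion conclusion from the edge-expansion hypothesis via the standard ``count boundary edges, subtract $|F|$, divide by the maximum degree'' argument, splitting into two cases depending on whether $|U| \le n/2$ or $n/2 < |U| \le \frac{2n}{3}$.

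For the first case, I would apply the $\lambda$-expander hypothesis directly to $U$ to obtain $e_H(U, \bar U) \ge \lambda d(H)|U| \ge \lambda d(1-\eps)|U| \ge \tfrac{3}{4}\lambda d|U|$, using $\eps \le 1/4$. Since $|F| \le \tfrac{\lambda d}{4}|U|$, the edge boundary in $H - F$ is at least $\tfrac{\lambda d}{2}|U|$. Each vertex of $\bar U$ absorbs at most $\Delta(H) \le d$ of these edges, so $|N_{H-F}(U)| \ge \tfrac{\lambda}{2}|U| \ge \tfrac{\lambda}{8}|U| = \tfrac{1/8}{(\log n)^c}|U|$, which is exactly the required bound.

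For the second case, the set $U$ itself is too large to plug into the $\lambda$-expander definition directly, so instead I would apply that definition to $\bar U$, which has size $n - |U| < n/2$. This yields $e_H(U,\bar U) = e_H(\bar U, U) \ge \lambda d(H)|\bar U|$. Since $|U| \le \tfrac{2n}{3}$ forces $|\bar U| \ge \tfrac{n}{3} \ge \tfrac{|U|}{2}$, this gives $e_H(U, \bar U) \ge \tfrac{\lambda d(H)}{2}|U| \ge \tfrac{3\lambda d}{8}|U|$. Subtracting the $|F| \le \tfrac{\lambda d}{4}|U|$ removed edges leaves at least $\tfrac{\lambda d}{8}|U|$ boundary edges in $H - F$, and dividing by $\Delta(H) \le d$ again yields $|N_{H-F}(U)| \ge \tfrac{\lambda}{8}|U|$, matching the target.

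There is no real obstacle here: the argument is purely a bookkeeping exercise, and the constants in the statement ($\eps' = 1/8$, $s = \lambda d/4$) are precisely those that make the two subtractions work out with room to spare. The only mildly subtle point is handling the range $n/2 < |U| \le 2n/3$, which is not covered by the $\lambda$-expander definition as written; passing to the complement and using $|\bar U| \ge |U|/2$ dispatches it cleanly.
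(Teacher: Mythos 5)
Your proof is correct and follows essentially the same route as the paper's: split on whether $|U| \le n/2$ or $n/2 < |U| \le 2n/3$, apply the edge-expansion hypothesis to $U$ in the first case and to $\overline{U}$ (using $|\overline{U}| \ge |U|/2$) in the second, subtract the at most $\tfrac{\lambda d}{4}|U|$ edges of $F$, and divide by $\Delta(H) \le d$. The constants work out exactly as you describe, so there is nothing to add.
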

\begin{proof}
	Let $U \subseteq V(H)$ with $1 \le |U| \le \frac{2}{3} n$, and let $F \subseteq E(H)$ with $|F| \le \frac{\lambda d}{4} |U|$. 

	First consider the case when $1 \le |U| \le n/2$. Then, since $H$ is a $\lambda$-expander, we have $e_H(U, \overline{U}) \ge \lambda d(H) |U|$. Therefore, $e_{H-F}(U, \overline{U}) \ge (\lambda d(H) - \frac{\lambda d}{4}) |U| \ge (\lambda d(1 - \eps) - \frac{\lambda d}{4}) |U| \ge \frac{\lambda d}{4} |U|$. Since $\Delta(H) \le d$, this implies that 
	\begin{equation*}
		|N_{H - F}(U)| \ge \frac{\lambda}{4} |U| \ge \frac{|U|}{8 (\log n)^c}.
	\end{equation*}
	Now consider the case when $n/2 \le |U| \le 2n/3$. So $|U|/2 \le n/3 \le |\overline{U}| \le n/2$. Then, $e_{H-F}(U, \overline{U}) \ge \lambda d(H) |\overline{U}| - \frac{\lambda d}{4} |U| \ge \frac{\lambda d(H)}{2} |U| - \frac{\lambda d}{4} |U| \ge (\frac{\lambda d(1 - \eps) }{2}- \frac{\lambda d}{4}) |U| \ge \frac{\lambda d}{8} |U|.$ Since $\Delta(H) \le d$, we have 
	$$ |N_{H - F}(U)| \ge \frac{\lambda}{8} |U| = \frac{|U|}{8 (\log n)^c}.$$
	Therefore, $H$ is a $(\frac{1}{8}, c, \frac{\lambda d}{4})$-expander, proving the lemma.
\end{proof}

\subsection{Probabilistic tools}\label{sec:conc}
We will often use a basic version of Chernoff's inequality for the binomial random variable (see, for example, \cite{alon2016probabilistic, upfal2005probability}).

\begin{theorem}[Chernoff's bound]\label{chernoff}
Let $n$ be an integer, let $0\le p \le 1$, let $X \sim \bin(n,p),$ and let $\mu=\mathbb{E} X = np$. Then, the following hold.

\begin{enumerate}[label = \rm(\roman*)]
    \item If $0 < \delta < 1$, then $$\P(X \le (1-\delta) \mu) \le e^{-\frac{\delta^2\mu}{2}}.$$
    \item If $\delta \ge 0$, then $$\P(X\geq (1+\delta )\mu )\le e^{-\frac{\delta^{2}\mu}{(2+\delta )}}.$$
\end{enumerate}
\end{theorem}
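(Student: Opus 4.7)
The plan is to prove both tail bounds by the classical Chernoff moment-generating-function method, applying Markov's inequality to $e^{tX}$ for a carefully chosen $t>0$ and then invoking a standard analytic estimate on $\ln(1\pm\delta)$.

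For the upper tail (part (ii)), I would first write $X=\sum_{i=1}^n X_i$ where the $X_i$ are i.i.d.\ Bernoulli$(p)$, so that for every $t>0$,
\[
\Ex[e^{tX}]=(1-p+pe^t)^n \le e^{np(e^t-1)}=e^{\mu(e^t-1)},
\]
using $1+x\le e^x$ with $x=p(e^t-1)$. Then Markov's inequality applied to $e^{tX}$ gives
\[
\Pr(X\ge (1+\delta)\mu) \le e^{-t(1+\delta)\mu}\,\Ex[e^{tX}] \le \exp\!\bigl(\mu(e^t-1-t(1+\delta))\bigr).
\]
Setting $t=\ln(1+\delta)$ (which is $\ge 0$ since $\delta\ge 0$) this simplifies to $\exp\!\bigl(-\mu\bigl((1+\delta)\ln(1+\delta)-\delta\bigr)\bigr)$, so it suffices to verify the analytic inequality $(1+\delta)\ln(1+\delta)-\delta \ge \frac{\delta^2}{2+\delta}$ for $\delta\ge 0$. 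This follows by differentiating both sides or by a direct Taylor/convexity argument, and yields the stated bound $e^{-\delta^2\mu/(2+\delta)}$.

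For the lower tail (part (i)), I would run the same argument with $-t$ in place of $t$: for $t>0$,
\[
\Pr(X\le (1-\delta)\mu) \le e^{t(1-\delta)\mu}\,\Ex[e^{-tX}] \le \exp\!\bigl(\mu(e^{-t}-1+t(1-\delta))\bigr),
\]
and the optimal choice $t=-\ln(1-\delta)>0$ (valid since $0<\delta<1$) reduces the right-hand side to $\exp\!\bigl(-\mu\bigl((1-\delta)\ln(1-\delta)+\delta\bigr)\bigr)$. The remaining step is the inequality $(1-\delta)\ln(1-\delta)+\delta \ge \delta^2/2$ for $0<\delta<1$, which again follows from a short Taylor/convexity computation (the function $f(\delta)=(1-\delta)\ln(1-\delta)+\delta-\delta^2/2$ satisfies $f(0)=0$ and $f'(\delta)=-\ln(1-\delta)-\delta\ge 0$).

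The only real obstacle is verifying the two analytic inequalities on $(1\pm\delta)\ln(1\pm\delta)$; these are routine but do require care, since one cannot simply truncate the Taylor series. Everything else is a mechanical application of Markov's inequality and independence. I expect no difficulty with the binomial moment generating function step, and the result is standard enough that one can alternatively just cite the references \cite{alon2016probabilistic, upfal2005probability} given in the paper.
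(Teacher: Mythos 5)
Your proof is correct: it is the standard moment-generating-function argument, and both analytic inequalities you defer to ($(1+\delta)\ln(1+\delta)-\delta \ge \delta^2/(2+\delta)$ and $(1-\delta)\ln(1-\delta)+\delta \ge \delta^2/2$) do hold by the derivative computations you sketch. The paper does not prove this statement at all — it cites it as a standard tool from \cite{alon2016probabilistic, upfal2005probability} — and the proofs in those references are precisely this argument, so there is nothing to compare beyond noting that your write-up supplies the details the paper omits.
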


Additionally, we use the following martingale concentration result (see Chapter 7 in \cite{alon2016probabilistic}).
We say that a function $f : \prod_{i = 1}^n	\Omega_i \to \bR$, where $\Omega_i$ are arbitrary sets, is \emph{$k$-Lipschitz} if $|f(u) - f(v)| \le k$ for every $u, v \in \prod_{i = 1}^n \Omega_i$ that differ in at most one coordinate.
\begin{lemma} 
	\label{lem:concentration}
	Let $X_1, \ldots, X_n$ be independent random variables, with $X_i$ taking values in a set $\Omega_i$ for $i \in [n]$, and write $X = (X_1, \ldots, X_n)$. 
	Suppose that $f : \prod_{i = 1}^n \Omega_i \to \bR$ is $k$-Lipschitz. Then,
	\begin{equation*}
		\Pr\big(|f(X) - \Ex f(X)| > t\big) \le 2 \exp \left(\frac{-t^2}{2k^2n}\right).
	\end{equation*}
\end{lemma}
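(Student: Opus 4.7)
The plan is to prove this as a standard application of the Azuma--Hoeffding inequality to the Doob martingale of $f(X)$, which is essentially McDiarmid's bounded differences inequality.

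First, I would set up the Doob martingale. For $0 \le i \le n$, define
\begin{equation*}
Y_i = \Ex\bigl[f(X) \,\big|\, X_1, \ldots, X_i\bigr],
\end{equation*}
with the convention that $Y_0 = \Ex f(X)$ and $Y_n = f(X)$. By the tower property, $(Y_i)_{i=0}^n$ is a martingale with respect to the filtration generated by $X_1, \ldots, X_n$, so it suffices to show that the increments $Y_i - Y_{i-1}$ are bounded by $k$ in absolute value, and then apply Azuma's inequality.

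Next, I would verify the bounded-differences property. Fix $i$ and condition on $X_1 = x_1, \ldots, X_{i-1} = x_{i-1}$. Using independence of the $X_j$'s, we can write
\begin{equation*}
Y_i - Y_{i-1} = \Ex_{X_{i+1}, \ldots, X_n}\bigl[f(x_1,\ldots,x_{i-1}, X_i, X_{i+1}, \ldots, X_n)\bigr] - \Ex_{X_i', X_{i+1}, \ldots, X_n}\bigl[f(x_1,\ldots,x_{i-1}, X_i', X_{i+1}, \ldots, X_n)\bigr],
\end{equation*}
where $X_i'$ is an independent copy of $X_i$. Coupling the two expectations on the same $(X_{i+1}, \ldots, X_n)$ and using that $f$ is $k$-Lipschitz (so that changing only the $i$-th coordinate alters $f$ by at most $k$), the integrand is bounded by $k$, hence $|Y_i - Y_{i-1}| \le k$ almost surely.

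Finally, since $(Y_i)$ is a martingale with increments bounded by $k$ and $Y_n - Y_0 = f(X) - \Ex f(X)$, Azuma's inequality yields
\begin{equation*}
\Pr\bigl(|f(X) - \Ex f(X)| > t\bigr) \le 2\exp\!\left(\frac{-t^2}{2k^2 n}\right),
\end{equation*}
as desired. The only mildly delicate step is the bounded-differences verification, since the martingale increment is an expectation of $f$ at two random inputs rather than $f$ at two deterministic inputs differing in one coordinate; independence of the coordinates is what lets the coupling argument transfer the Lipschitz bound through the expectation.
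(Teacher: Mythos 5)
Your proof is correct. Note that the paper does not prove this lemma at all --- it states it as a known result with a citation to Chapter 7 of Alon and Spencer --- and your argument (the Doob martingale of $f(X)$ obtained by exposing the coordinates one at a time, the coupling verification that the increments are bounded by $k$ via independence and the one-coordinate Lipschitz condition, and then Azuma's inequality with $\sum_{i=1}^n c_i^2 = k^2 n$) is precisely the standard proof given in that reference, so there is nothing to add.
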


\subsection{A regularisation theorem} \label{subsec:regularisation}

Our main theorem (\Cref{thm:packingsubdivisons}) is stated for regular graphs $G$ with sufficiently large degree. However, for other applications of our methods (such as the one that will be discussed in \Cref{sec:application:chords}), the graph
$G$ does not need to be regular. In such cases, it is useful to extract a regular subgraph whose degree remains close to the average degree of $G$. We achieve this using a result by Chakraborti, Janzer, Methuku, and Montgomery \cite{CJMMregular}. 

\begin{theorem} \label{thm:regular}
	There exists a constant $\gamma$ such for all positive integers $r,n$ with $r \le n/2$, every $n$-vertex graph with average degree at least $\gamma r\log(n/r)$ contains an $r$-regular subgraph.
\end{theorem}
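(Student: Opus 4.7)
The plan is to combine a standard degree-trimming step with a dyadic pigeonhole to reduce to a quasi-regular graph, and then apply an $f$-factor theorem to extract the $r$-regular subgraph.

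\textbf{Step 1 (Trimming to high minimum degree).} First, I would apply the folklore observation that every graph with average degree at least $d$ contains a subgraph with minimum degree at least $d/2$: iteratively delete any vertex of degree less than half the current average, noting that this never decreases $e(G)/|V(G)|$. Applied to $G$, this yields a subgraph $G_1$ with $\delta(G_1) \geq \frac{\gamma}{2}\, r\log(n/r)$, on some $n_1 \le n$ vertices.

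\textbf{Step 2 (Dyadic pigeonhole to a quasi-regular subgraph).} Next, partition $V(G_1)$ into $O(\log n_1)$ dyadic degree classes $V_i = \{v : 2^i \le d_{G_1}(v) < 2^{i+1}\}$. By a double pigeonhole applied to the edges, there is a single pair of classes (possibly equal) carrying an $\Omega(1/\log^2 n_1)$ fraction of the edges; restricting to these, then deleting low-degree vertices again, one extracts a bipartite subgraph $G_2$ whose degrees on each side lie in an interval $[D, 2D]$, with $D$ at least a constant times $r \log(n/r)$. Choosing the pair carefully (and, if needed, further random sub-sampling of vertices on the heavier side to equalize the two parts) gives a balanced quasi-regular bipartite graph.

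\textbf{Step 3 (Extracting the $r$-regular subgraph).} Finally, apply an $r$-factor theorem in the bipartite setting (equivalent to a Hall-type or max-flow condition): a bipartite graph with parts of equal size $m$ and minimum degree at least $r$ contains an $r$-regular spanning subgraph provided that for every $S$ on one side, $|N(S)| \ge \min(|S|, m - r)$-type conditions hold. The minimum-degree and quasi-regularity guarantees from Step~2, combined with the expansion implied by having average degree $\Omega(r \log(n/r))$ on $O(n/r)$ effective vertices, verify these cut conditions. A small amount of random edge deletion handles residual parity/small-obstruction issues.

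\textbf{Main obstacle.} The critical and most delicate point is obtaining the sharp $\log(n/r)$ factor (rather than $\log n$). A naive argument loses a $\log n$ in the dyadic step. The sharper bound comes from observing that $G_2$ can be assumed to live on $O(n/r)$ `effective' vertices (after trimming, the relevant $\log$ is that of the number of vertices of the quasi-regular subgraph, not the original $n$), so the dyadic loss is absorbed into the difference between $\log n$ and $\log(n/r)$. Carefully tracking this blow-up is what makes the theorem tight, as witnessed by unbalanced complete bipartite graphs $K_{r, n-r}$, which have average degree roughly $2r$ but no $(r{+}1)$-regular subgraph.
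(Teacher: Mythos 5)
This statement is not proved in the paper at all: it is quoted verbatim from Chakraborti, Janzer, Methuku and Montgomery \cite{CJMMregular}, where it is a main theorem giving tight bounds for the Erd\H{o}s--Sauer problem (improving the breakthrough of Janzer and Sudakov). So any self-contained proof would have to reproduce a genuinely difficult argument, and your sketch does not do so.

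The decisive gap is Step 3. Even granting a balanced bipartite subgraph $G_2$ with all degrees in $[D,2D]$ for some $D\ge Cr$, extracting an $r$-regular subgraph is not a routine application of a Hall/Gale--Ryser/max-flow condition. The cut condition for a \emph{spanning} $r$-factor ($e(S,T)\ge r(|S|+|T|-m)$ for all $S,T$) is a global expansion requirement that quasi-regularity and large minimum degree simply do not imply, and passing to a non-spanning $r$-regular subgraph of a quasi-regular host is essentially the Erd\H{o}s--Sauer problem itself; if ``min degree $\gg r$ plus near-regularity implies an $r$-regular subgraph'' were true via a factor theorem, the problem would not have been open for decades. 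Your appeal to ``expansion implied by having average degree $\Omega(r\log(n/r))$'' is circular: nothing in Steps 1--2 produces expansion, and ``a small amount of random edge deletion'' cannot repair a failed cut condition. Step 2 also does not deliver what you claim: restricting $G_1$ to two dyadic degree classes that carry many edges controls the degrees \emph{in $G_1$}, not the degrees \emph{within the restricted subgraph}, so $G_2$ need not be quasi-regular without a further (and lossy) iteration. Finally, the accounting in your ``main obstacle'' paragraph is unsupported: after trimming, the graph can still have $\Theta(n)$ vertices, so there is no reason the relevant logarithm is $\log(n/r)$ rather than $\log n$; and $K_{r,n-r}$ only witnesses that average degree $\approx 2r$ is insufficient, not the $\log(n/r)$ factor. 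If you only need an almost-regular (rather than exactly regular) subgraph, the paper's own remark points to the elementary route via \cite[Lemma 2.2]{bucic2020nearly} and Pyber's method, at the cost of an extra polylogarithmic factor; the exact statement above should be cited, not re-derived by these means.
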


In \cite{CJMMregular}, \Cref{thm:regular} and a related result for the case when $r$ is small relative to $n$ were used to resolve a problem of R\"odl and Wysocka~\cite{rodl1997note} from 1997 for almost all $r$ and to obtain tight bounds for the Erd\H{o}s-Sauer problem~\cite{erdos1975some} (up to an absolute constant factor), improving the recent breakthrough of Janzer and Sudakov~\cite{janzer2023resolution}, who had resolved the problem up to a constant depending on $r$. For our application in this paper, it suffices to find an almost-regular subgraph with a large degree rather than a fully regular subgraph. This can be achieved, at the cost of an additional polylogarithmic factor, using a simple lemma by Buci\'c, Kwan, Pokrovskiy, Sudakov, Tran, and Wagner~\cite{bucic2020nearly}, which builds on methods developed by Pyber~\cite{pyber1985regular}.

\section{Packing a regular graph with nearly regular expanders}
\label{sec:packingwithexpanders}

In this section, we prove our first key lemma, \Cref{cor:expandercoverstrong}, which shows that one can find sufficiently regular, vertex-disjoint expanders covering almost all vertices of any regular graph with sufficiently large degree. An important feature of this lemma is that it allows us to control the regularity properties of the expanders we obtain (via the parameter $C$), which is crucial to constructing a nearly Hamilton cycle in each of these expanders (as explained in the proof sketch, see \Cref{sec:proofsketch}).
As mentioned earlier, expanders with good regularity properties are useful for various other applications, and the question of finding such expanders was raised by Chakraborti, Janzer, Methuku, and Montgomery in \cite{chakraborti2024edge}. Our lemma below makes progress on this problem.

\begin{lemma}
	\label{cor:expandercoverstrong}
	Let $\alpha > 0$ be a fixed real number, let $\eps, C, n, d$ satisfy $d \ge 2 \log n$, $0 \le \eps \le (\log n)^{-(C-1)}$, $C \ge \max\{28\alpha + 3, 56\alpha + 1\}$ and let $n$ be large enough. Let $c = \frac{C(C-1)}{C-28 \alpha - 1}$. 

	Suppose that $G$ is an $n$-vertex graph with $\Delta(G) \le d$ and $d(G) \ge d(1- \eps)$. Then, there is a collection $\cH$ of vertex-disjoint subgraphs of $G$ such that every $H \in \cH$ is a $(\frac{1}{8}, c, s_H)$-expander satisfying $d(H) \ge d(1 - \eps_H)$ and $\delta(H) \ge d(H)/2$, where $s_H \coloneqq \frac{d}{4 (\log |V(H)|)^c}$, and $\eps_H \coloneqq (\log |V(H)|)^{-(C-28 \alpha - 1)}$. Moreover, $\sum_{H \in \cH} |V(H)| \ge (1 - \frac{(\log \log \log n)^2}{(\log \log n)^{\alpha}})n$. 
\end{lemma}

Note that increasing the parameter $C$ in \Cref{cor:expandercoverstrong} improves the regularity properties of the expanders we obtain at the cost of slightly weakening their expansion. Indeed, in our proof we let $\alpha  = \frac{1}{28}$ (though this choice is somewhat arbitrary), so that $c = \frac{C(C-1)}{C- 2}$. Hence, as $C$ increases, it is easy to see that $\eps_H = (\log |V(H)|)^{-(C-2)}$ decreases (improving the regularity properties of our expanders) and $c$ increases (making their expansion weaker).

In certain cases, we do not require the full strength of \Cref{cor:expandercoverstrong} and instead seek a single expander that is nearly regular. In such situations, we can actually remove the near-regularity assumption on $G$ at the cost of a slightly worse lower bound on the average degree of $G$, as shown below.

\begin{corollary}
	\label{cor:findexpander}
	There is a constant $\gamma$ such that the following holds for all sufficiently large $n$ and for $C, d$ satisfying $C \ge 4$, $d \ge 2 \log n$.
	Suppose that $G$ is an $n$-vertex graph with $d(G) \ge \gamma d \log n$. Then, there is a subgraph $H \subseteq G$ which is a $(\frac{1}{8}, c, s)$-expander satisfying $\Delta(H) \le d$, $d(H) \ge d(1 - \mu)$ and $\delta(H) \ge d(H)/2$, where $c \coloneqq \frac{C(C-1)}{C-2}$, $s \coloneqq \frac{d}{4 (\log |V(H)|)^c}$, and $\mu \coloneqq (\log |V(H)|)^{-(C-2)}$. 
\end{corollary}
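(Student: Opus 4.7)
The plan is to reduce \Cref{cor:findexpander} to \Cref{cor:expandercoverstrong} by a regularisation step. Let $\gamma$ be the constant of \Cref{thm:regular}. Since $d(G) \ge \gamma d \log n \ge \gamma d \log(n/d)$, and since $d < n/2$ for $n$ large (which follows from $d(G) \le n - 1$ together with $d(G) \ge \gamma d \log n$), \Cref{thm:regular} applied with $r = d$ produces a $d$-regular subgraph $G' \subseteq G$. Then $G'$ satisfies the hypotheses of \Cref{cor:expandercoverstrong} with $\eps = 0$: indeed $\Delta(G') = d(G') = d$, and $|V(G')| \ge d + 1 \ge 2\log n + 1$ is large for large $n$, so in particular $d \ge 2\log|V(G')|$.

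Now apply \Cref{cor:expandercoverstrong} to $G'$ with $\alpha = \tfrac{1}{28}$ and $\eps = 0$. The corollary's hypothesis $C \ge 4$ exactly matches the requirement $C \ge \max\{28\alpha + 3,\, 56\alpha + 1\} = 4$ of the lemma, and $\eps = 0$ trivially satisfies $\eps \le (\log|V(G')|)^{-(C-1)}$. The key observation is that $\alpha = \tfrac{1}{28}$ makes $C - 28\alpha - 1 = C - 2$, so the parameters produced by the lemma,
\[
c = \tfrac{C(C-1)}{C-2}, \quad \eps_H = (\log|V(H)|)^{-(C-2)}, \quad s_H = \tfrac{d}{4(\log|V(H)|)^c},
\]
coincide exactly with the desired $c$, $\mu$, and $s$ in the statement of \Cref{cor:findexpander}.

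The lemma yields a non-empty collection $\cH$ of pairwise vertex-disjoint subgraphs of $G'$, each with the stated expansion, degree, and minimum-degree properties (non-emptiness follows from the bound $\sum_{H \in \cH} |V(H)| \ge (1 - o(1))|V(G')|$). Pick any $H \in \cH$. Then $H \subseteq G' \subseteq G$ has $\Delta(H) \le \Delta(G') = d$, is a $(\tfrac{1}{8}, c, s)$-expander, has $d(H) \ge d(1 - \mu)$, and has $\delta(H) \ge d(H)/2$, exactly as required. The constant $\gamma$ in \Cref{cor:findexpander} can be taken to be the constant from \Cref{thm:regular}.

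\textbf{Main obstacle.} There is no real conceptual obstacle here: once \Cref{thm:regular} and \Cref{cor:expandercoverstrong} are available, this becomes a bookkeeping exercise. The only care needed is in the parameter alignment, specifically checking that $\alpha = \tfrac{1}{28}$ makes $C - 28\alpha - 1 = C - 2$, so that the lemma's $c$, $\eps_H$, $s_H$ match the corollary's $c$, $\mu$, $s$, and in observing that the $d$-regularity of $G'$ allows us to apply \Cref{cor:expandercoverstrong} with $\eps = 0$ irrespective of the size of $|V(G')|$. One could alternatively replace \Cref{thm:regular} by the Buci\'c--Kwan--Pokrovskiy--Sudakov--Tran--Wagner almost-regularisation lemma mentioned in \Cref{subsec:regularisation}, at the cost of a small additional polylogarithmic factor absorbed into $\gamma$.
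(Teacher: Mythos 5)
Your proposal is correct and follows exactly the paper's own argument: regularise $G$ via \Cref{thm:regular} to obtain a $d$-regular subgraph $G'$, then apply \Cref{cor:expandercoverstrong} to $G'$ with $\alpha = \tfrac{1}{28}$ and $\eps = 0$ and take any member of the resulting collection. Your parameter verification (in particular that $C - 28\alpha - 1 = C - 2$, so the lemma's $c$, $\eps_H$, $s_H$ coincide with the corollary's $c$, $\mu$, $s$) is precisely the bookkeeping the paper leaves implicit.
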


\begin{proof}[ of \Cref{cor:findexpander} using \Cref{cor:expandercoverstrong}]
	Apply the regularisation theorem, \Cref{thm:regular}, to $G$ to obtain a $d$-regular subgraph $G'$. Now apply \Cref{cor:expandercoverstrong} to the graph $G'$ (with $\alpha = \frac{1}{28}$ and $\eps = 0$), and let $H$ be any graph in the collection $\cH$ guaranteed by the lemma. Then, it is easy to see that $H$ has the desired properties.
\end{proof}

\begin{rem}
	Notice that in the proof of \Cref{cor:findexpander} we used \Cref{thm:regular}, due to Chakraborti, Janzer, Methuku and Montgomery \cite{CJMMregular}, which gives a tight bound on the number of edges in an $n$-vertex graph needed to guarantee the existence of a $d$-regular subgraph. A slightly weaker version of \Cref{cor:findexpander} can be obtained using more elementary tools, where $d(G)$ is required to be at least $5d(\log n)^{C}$ rather than $\gamma d\log n$, by applying \cite[Lemma~2.2]{bucic2020nearly}, as mentioned earlier.
\end{rem}

We prove \Cref{cor:expandercoverstrong} by combining Lemma~\ref{lem:relateexpanders} with the following variant of \Cref{cor:expandercoverstrong} for edge expanders rather than (robust) vertex expanders.

\begin{lemma}
	\label{lem:expandercoverstrong}
	Let  $\alpha > 0$ be a fixed real number, let $\eps, C, n, d$ satisfy $d \ge 2\log n$, $0 \le \eps \le (\log n)^{-(C-1)}$, $C \ge \max\{28\alpha + 3, 56\alpha + 1\}$, and let $n$ be large enough. Let $c = \frac{C(C-1)}{C-28\alpha-1}$.

	Let $G$ be an $n$-vertex graph with $\Delta(G) \le d$ and $d(G) \ge d(1- \eps)$. Then, there is a collection $\cH$ of vertex-disjoint subgraphs of $G$ such that every $H \in \cH$ is a $\lambda_H$-expander satisfying $d(H) \ge d(1 - \eps_H)$ and $\delta(H) \ge d(H)/2$, where $\lambda_H \coloneqq (\log |V(H)|)^{-c}$ and $\eps_H \coloneqq (\log |V(H)|)^{-(C-28 \alpha - 1)}$. Moreover, $\sum_{H \in \cH} |V(H)| \ge (1 - \frac{(\log \log \log n)^2}{(\log \log n)^{\alpha}})n$. 
\end{lemma}

The rest of this section is dedicated to proving Lemma~\ref{lem:expandercoverstrong}. 
To that end, in \Cref{subsec:weakexpanderpacking}, we first construct an almost-perfect packing using expanders that may exhibit very weak expansion but possess excellent regularity properties. Then, in \Cref{subsec:refiningthepacking}, we introduce a refining procedure that begins with these expanders and iteratively improves their expansion while largely preserving their strong regularity properties, ultimately producing expanders with both good expansion and regularity properties, thereby proving Lemma~\ref{lem:expandercoverstrong}.

\subsection{Packing with very weak expanders}
\label{subsec:weakexpanderpacking}

In this subsection, we prove the following lemma which shows that one can cover almost all vertices of an $n$-vertex graph $G$ (with average degree at least $d(1 - \eps)$ and maximum degree $d$) using
vertex-disjoint $\lambda$-expanders having expansion factor $\lambda = O(\frac{\eps}{\log n})$ and having average degree very close to that of $G$. Since these expanders could have very few vertices compared to $n$, this $\lambda$ may indicate very weak expansion (which is not sufficient for our purpose), but crucially, these expanders have good regularity properties if we pick $\eps$ small enough. This is crucial to our refining procedure in \Cref{subsec:refiningthepacking}, which starts with these expanders and iteratively improves their expansion while only slightly weakening their regularity properties.

\begin{lemma}
	\label{lem:expandercoverweak}
	Let $\alpha > 0$ be a fixed real number and let $\eps, \lambda, n$ satisfy $\lambda \log n \le \min \{\frac{1}{10}, \eps \}$ and let $n$ be sufficiently large. Let $G$ be an $n$-vertex graph with $\Delta(G) \le d$ and $d(G) \ge d(1-\eps)$. Then, there exists a collection $\cH$ of vertex-disjoint $\lambda$-expanders in $G$ such that $d(H) \ge d(1 - \eps (\log n)^{28 \alpha})$ and $\delta(H) \ge d(H)/2$ for every $H \in \cH$ and $\sum_{H \in \cH} |V(H)| \ge (1 - \frac{1}{(\log n)^{\alpha}}) n$. 
\end{lemma}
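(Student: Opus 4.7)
My plan is to construct $\cH$ via a tree-based partition of $G$. I will maintain a queue of subgraphs, initialized to $\{G\}$, together with a collection $\cH$ and a waste set $W$, both initially empty. For each subgraph $H$ popped from the queue, I first iteratively remove (into $W$) any vertex $v$ with $d_H(v) < d(H)/2$; once this min-degree pruning completes, I check whether $H$ is a $\lambda$-expander. If so, add $H$ to $\cH$; otherwise I pick a witnessing sparse cut $U \subseteq V(H)$ with $|U| \le |V(H)|/2$ and $e_H(U, V(H) \setminus U) < \lambda d(H) |U|$, and enqueue $H[U]$ and $H[V(H) \setminus U]$. The process terminates since subgraphs strictly shrink at splits and singletons are vacuously $\lambda$-expanders, and by construction every $H \in \cH$ satisfies both the $\lambda$-expansion and the minimum-degree condition of the lemma.

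The analysis will be driven by the potential $\ell(J) := \sum_{v \in V(J)}(d - d_J(v)) \ge 0$, which measures total degree deficiency. The hypothesis $d(G) \ge d(1-\eps)$ gives $\ell(G) \le \eps d n$. A direct calculation shows that a min-degree pruning of $v$ changes $\ell$ by $2 d_H(v) - d < 0$, so pruning only decreases $\ell$; a sparse split at $(U, \overline{U})$ increases the total $\ell$ across the two pieces by exactly $2 e_H(U, \overline{U}) < 2 \lambda d |U|$. The standard charging argument -- a vertex can lie on the smaller side of at most $\log n$ splits along its root-to-leaf path, since the size of its containing subgraph at least halves each such time -- gives $\sum_{\text{splits}} |U| \le n \log n$, so the total $\ell$ summed over all final leaves is at most $\ell(G) + 2 \lambda d n \log n \le 3 \eps d n$, using $\lambda \log n \le \eps$.

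From this budget I will derive two bounds on the uncovered vertices. First, any ``bad'' leaf $H$ with $d(H) < d(1 - \eps (\log n)^{28\alpha})$ satisfies $\ell(H) > d \eps (\log n)^{28 \alpha} |V(H)|$, so the total size of bad leaves is at most $3 n / (\log n)^{28 \alpha}$, and I will discard these. Second, to bound $|W|$, I will partition pruned vertices by their initial degree: at most $4 \eps n$ of them have $d_G(v) < 3d/4$ (otherwise $\ell(G)$ would exceed $\eps d n$), while the rest have $d_G(v) \ge 3d/4$ yet $d_H(v) < d/2$ at prune time, so each lost at least $d/4$ to cut edges; since the total cut-degree-loss is at most $2\lambda d n \log n \le 2 \eps d n$, the latter count is at most $8 \eps n$. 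Hence $|W| \le 12 \eps n$, and the total uncovered count is at most $12 \eps n + 3 n / (\log n)^{28 \alpha}$. The main obstacle I anticipate is checking that this meets the required coverage throughout the full parameter range: in the small-$\eps$ regime of interest (which includes the application in \Cref{lem:expandercoverstrong}, where $\eps$ is polylogarithmically small), the bound is comfortably below $n / (\log n)^\alpha$; for larger $\eps$ the degree requirement $d(H) \ge d(1 - \eps(\log n)^{28\alpha})$ becomes vacuous, so only the coverage itself needs a slightly more careful accounting that mirrors the same potential argument.
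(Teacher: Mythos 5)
Your construction and most of the potential-function analysis are sound (the budget $\sum_{\text{leaves}}\ell \le 3\eps d n$ and the resulting bound on the total size of low-average-degree leaves are correct), but the bound $|W|\le 12\eps n$ on the pruned vertices has a genuine gap. You assert that a pruned vertex $v$ with $d_G(v)\ge 3d/4$ ``lost at least $d/4$ to cut edges''. This is false in general: $v$ may instead have lost its degree to \emph{previously pruned neighbours}, and this cascading loss is not controlled by the cut budget. Quantitatively, each pruned vertex, when removed, deletes up to $d(H)/2\approx d/2$ edges incident to surviving vertices, while a surviving vertex of original degree $\ge 3d/4$ only needs to lose about $d/4$ edges to become prunable; the honest inequality is therefore $\tfrac{d}{4}|W_2|\le 2\eps dn+\tfrac{d}{2}|W|$, which is vacuous. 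The amplification is real: take layers $L_0,L_1,\dots$ with $|L_{i+1}|=\tfrac{d-2}{d+2}|L_i|$, where each vertex of $L_{i+1}$ has $\tfrac d2+1$ neighbours in $L_i$ and $\tfrac d2-1$ in $L_{i+2}$, each vertex of $L_0$ has only its $\tfrac d2-1$ neighbours in $L_1$, and the last layer is glued into a $d$-regular expander on the remaining vertices. Then $\ell(G)=|L_0|(\tfrac d2+1)$, so one can arrange $\eps d=1$ with $\eps$ polylogarithmically small; yet the pruning peels off every layer in turn (each pruned vertex has degree exactly $\tfrac d2-1<d(H)/2$ at removal time and lowers $\ell$ by only $2$), so $|W|=\sum_i|L_i|\approx \tfrac{d}{4}|L_0|=\Theta(\eps d n)=\Theta(n)$ --- far above both $12\eps n$ and the required $n/(\log n)^{\alpha}$. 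In general a one-pass scheme that discards pruned vertices permanently can only guarantee $|W|\le \ell(G)=\eps dn$, which does not suffice.

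The paper's proof avoids exactly this trap by never discarding pruned vertices for good: \Cref{lem::findexpander} and \Cref{lem:packingoneround} extract a maximal family of expanders covering at least a quarter of the current vertex set, and the key computation shows that the \emph{uncovered remainder} --- which absorbs all vertices removed during pruning --- still has average degree at least $d(1-8\eps)$, so the whole procedure can be restarted on it with a geometrically degraded parameter $\eps_i=C^i\eps$. Iterating $O(\alpha\log\log n)$ rounds is precisely what produces the $(\log n)^{28\alpha}$ loss in the degree guarantee. To repair your argument you would need to do something analogous, e.g.\ verify that $G[W]$ (together with the discarded bad leaves) retains average degree $d(1-O(\eps))$ and feed it back into the procedure with a relaxed degree threshold; at that point you are essentially re-deriving the paper's multi-round proof.
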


We build up to the proof of Lemma~\ref{lem:expandercoverweak} using a sequence of lemmas as follows.
\begin{itemize}
    \item First, in \Cref{lem::findexpander}, we show how to find one $\lambda$-expander in a sufficiently regular graph $G$ using standard methods (see, e.g., \cite{shapira2015small, draganic2023cycles}).
    \item Then, by iteratively applying Lemma~\ref{lem::findexpander}, we show how to cover a good proportion of vertices of $G$ with vertex-disjoint $\lambda$-expanders in \Cref{lem:packingoneround}. 
    \item  Finally, by iteratively applying \Cref{lem:packingoneround} we show how to cover almost all vertices of $G$ with vertex-disjoint $\lambda$-expanders (with average degree very close to that of $G$), proving Lemma~\ref{lem:expandercoverweak}.
\end{itemize}

\begin{lemma}
	\label{lem::findexpander}
	Let $G$ be an $n$-vertex graph with $d(G) = d$, and let $\lambda \log n \le \frac{1}{10}$. Then, $G$ contains a $\lambda$-expander $H$ with $d(H) \ge d(1 - 2 \lambda \log n)$ and $\delta(H) \ge d(H)/2$.
\end{lemma}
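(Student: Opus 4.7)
The plan is to construct $H$ by an iterative cleaning procedure starting from $H = G$. While $H$ fails one of the two target conditions I either remove a low-degree vertex or replace $H$ by one side of a sparse cut. Specifically: if there is $v \in V(H)$ with $d_H(v) < d(H)/2$, I set $H \leftarrow H - v$ (a \emph{vertex-deletion step}); otherwise $H$ is not a $\lambda$-expander, so I pick $U \subseteq V(H)$ with $1 \le |U| \le |V(H)|/2$ and $e_H(U, \comp{U}) < \lambda d(H) |U|$, and replace $H$ by its induced subgraph on whichever of $U$, $\comp{U}$ has the larger induced average degree (a \emph{cut step}). The procedure terminates because $|V(H)|$ strictly decreases at every step.

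A standard averaging argument shows that vertex-deletion steps never decrease $d(H)$, so the entire analysis reduces to tracking cut steps. Using the identity
\[
d(H) \;=\; \frac{|U|\, d(H[U]) + |\comp{U}|\, d(H[\comp{U}]) + 2 e_H(U, \comp{U})}{|V(H)|},
\]
the weighted average of $d(H[U])$ and $d(H[\comp{U}])$ equals $d(H) - 2 e_H(U, \comp{U})/|V(H)|$, so the kept side $S \in \{U, \comp{U}\}$ satisfies
\[
d(H[S]) \;\ge\; d(H) - \frac{2 e_H(U, \comp{U})}{|V(H)|} \;\ge\; d(H)\left(1 - \frac{2\lambda |U|}{|V(H)|}\right).
\]
Writing $H_1, \ldots, H_{k+1}$ for the graphs observed at the start of successive cut steps (with $H_{k+1}$ the final graph), $n_i = |V(H_i)|$, and $U_i$ for the smaller side of the $i$-th sparse cut, this yields
\[
d(H_{k+1}) \;\ge\; d \prod_{i=1}^{k} \left(1 - \frac{2\lambda |U_i|}{n_i}\right) \;\ge\; d\left(1 - 2\lambda \sum_{i=1}^{k} \frac{|U_i|}{n_i}\right).
\]

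The crux---which I expect to be the main obstacle---is bounding $\sum_{i} |U_i|/n_i$ by $\log n$. The key observation is that every cut step removes at least $|U_i|$ vertices: if we keep $\comp{U_i}$ we remove exactly $|U_i|$, and if we keep $U_i$ we remove $|\comp{U_i}| \ge n_i/2 \ge |U_i|$. Hence $|U_i| \le n_i - n_{i+1}$ in both cases, and using $1 - r \le \ln(1/r)$ for $r \in (0,1]$, the sum telescopes:
\[
\sum_{i=1}^{k} \frac{|U_i|}{n_i} \;\le\; \sum_{i=1}^{k} \frac{n_i - n_{i+1}}{n_i} \;\le\; \sum_{i=1}^{k} \ln \frac{n_i}{n_{i+1}} \;=\; \ln \frac{n_1}{n_{k+1}} \;\le\; \ln n \;\le\; \log n.
\]
Combining with the previous display gives $d(H_{k+1}) \ge d(1 - 2\lambda \log n)$, which is strictly positive since $\lambda \log n \le 1/10$; thus $H_{k+1}$ is nontrivial, and by the termination condition it is a $\lambda$-expander satisfying $\delta(H_{k+1}) \ge d(H_{k+1})/2$, as required.
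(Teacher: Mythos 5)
Your proof is correct, and the underlying procedure -- alternately deleting vertices of degree below $d(H)/2$ (which never decreases the average degree) and passing to one side of a sparse cut -- is the same as the paper's. What differs is the accounting of the average-degree loss, which is the crux of the lemma. The paper proves a dichotomy (\Cref{claim:iterateexpander}): for a sparse cut $U$, either $d(H[\overline{U}]) \ge d(H)$ or $d(H[U]) \ge (1-2\lambda)d(H)$; it keeps $\overline{U}$ in the first case and $U$ in the second, so the average degree can only drop in steps where the vertex count at least halves, there are at most $\log n$ such steps, and each costs a factor $1-2\lambda$, giving $(1-2\lambda)^{\log n} \ge 1 - 2\lambda \log n$. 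You instead keep whichever side has the larger induced average degree, bound the loss at \emph{every} cut step uniformly by the factor $1 - 2\lambda|U_i|/n_i$ via the weighted-average identity (valid, since the maximum of $d(H[U])$ and $d(H[\overline{U}])$ is at least their weighted average), and control the total via the telescoping bound $\sum_i |U_i|/n_i \le \sum_i \ln(n_i/n_{i+1}) \le \ln n \le \log n$, where the key observation $|U_i| \le n_i - n_{i+1}$ holds whichever side is kept. Both routes land on the same constant $2\lambda\log n$ -- yours through $\ln n \le \log_2 n$, the paper's through counting halvings -- and every step of your argument checks out. Your version has the mild aesthetic advantage of treating all cut steps uniformly rather than splitting them into lossless and lossy ones; the paper's has the advantage that the dichotomy claim is reusable as a stated intermediate statement.
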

\begin{proof}
	We perform a procedure which finds the desired $\lambda$-expander $H$ in $G$. Before we can describe the procedure, we need the following claim.
	\begin{claim}
		\label{claim:iterateexpander}
		Let $F$ be a subgraph of $G$ that is not a $\lambda$-expander. 
Then, there exists a non-empty set $U \subseteq V(F)$ with 
$|U| \le \frac{|V(F)|}{2}$ such that either 
$d(F[\overline U]) \ge d(F)$ or 
$d(F[U]) \ge (1-2\lambda)d(F)$, 
where $\overline U \coloneqq V(F)\setminus U$.
	\end{claim}
	\begin{proof}[ of claim] 
		Since $F$ is not a $\lambda$-expander, there is a non-empty set $U \subseteq V(F)$ with $|U| \leq \frac{|V(F)|}{2}$ such that $e_F(U,\overline{U}) < \lambda d(F) |U|$. We will show that $U$ is the desired subset. Suppose for a contradiction that $d(F[\overline{U}]) < d(F)$ and $d(F[U])< (1-2\lambda)d(F)$. Then, using the bound on $e_F(U,\overline{U})$, we have $e(F) = e_F(U) + e_F(U,\overline{U}) + e_F(\overline{U})  < |U| d(F) \left( \frac{1-2\lambda}{2} + \lambda \right) + |\overline{U}| \frac{d(F)}{2} = |V(F)| \frac{d(F)}{2} = e(F)$, which is a contradiction.   
	\end{proof}

	Let us now describe the procedure. We start the procedure with $F = G$. At every step of the procedure, we consider a subgraph $F$ of $G$ and do the following. 

	\begin{itemize}
		\item If $F$ is a $\lambda$-expander with $\delta(F) \ge d(F)/2$, then we let $H \coloneqq F$ be the desired $\lambda$-expander and stop the procedure. 
		\item If $F$ has a vertex $v$ with degree less than $d(F)/2$, we remove it and define $F' \coloneqq F \setminus v$. Note that in this case $d(F') \ge d(F)$. Now we repeat the procedure with $F'$ playing the role of $F$.
  
		\item Otherwise, by Claim~\ref{claim:iterateexpander}, there is a non-empty set $U \subseteq V(F)$ with $|U| \leq \frac{|V(F)|}{2}$ such that either $d(F[\overline{U}]) \geq d(F)$ or $d(F[U]) \geq (1-2\lambda)d(F)$. In the former case, let $F' \coloneqq F[\overline{U}]$, and in the latter case, let $F' \coloneqq F[U]$. Then, we repeat the procedure with $F'$ playing the role of $F$.
	\end{itemize}

	Note that at any step of our procedure, we can have $d(F') < d(F)$ only if $|V(F')| \le |V(F)|/2$ and $d(F') \geq (1-2\lambda)d(F)$. Furthermore, since $|V(F')| \le |V(F)|/2$ can only occur for at most $\log n$ steps, at any step of the procedure the subgraph $F$ we consider satisfies $d(F) \geq (1-2\lambda)^{\log n} d(G) \ge (1 - 2 \lambda \log n)d(G) \ge 4d(G)/5$. Combining this with the fact that the number of vertices of $F$ strictly decreases after each step, it follows that the procedure eventually stops with a non-empty subgraph $H$ which is a $\lambda$-expander satisfying $d(H) \ge (1 - 2 \lambda \log n)d(G)$ and $\delta(H) \ge d(H)/2$. This proves the lemma.
\end{proof}

By repeatedly applying \Cref{lem::findexpander}, we obtain the following lemma which shows that one can cover a good proportion of the vertices of any (sufficiently regular) graph $G$ with vertex-disjoint $\lambda$-expanders (whose average degree is very close to that of $G$). 

\begin{lemma}
	\label{lem:packingoneround}
	Let $\eps, \lambda, n$ satisfy $\lambda \log n \le \min \{\frac{1}{10}, \eps \}$. Let $G$ be an $n$-vertex graph with $\Delta(G) \le d$ and $d(G) \ge d(1- \eps)$. Then, there exists a collection $\cH$ of vertex-disjoint $\lambda$-expanders in $G$ such that $d(H) \ge d(1-10 \eps)$ and $\delta(H) \ge d(H)/2$ for every $H \in \cH$ and $\sum_{H \in \cH} |V(H)| \ge n/4$.
\end{lemma}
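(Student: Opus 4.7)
The plan is to iteratively apply \Cref{lem::findexpander} to the current leftover graph. Set $G_0 \coloneqq G$. At each step $i \ge 0$, halt if $d(G_i) < d(1-10\eps)/(1-2\eps)$; otherwise, apply \Cref{lem::findexpander} to $G_i$ to obtain a $\lambda$-expander $H_i \subseteq G_i$ satisfying
\[
d(H_i) \;\ge\; d(G_i)\bigl(1 - 2\lambda \log |V(G_i)|\bigr) \;\ge\; d(G_i)(1 - 2\eps) \;\ge\; d(1 - 10\eps)
\]
and $\delta(H_i) \ge d(H_i)/2$, and then set $G_{i+1} \coloneqq G_i \setminus V(H_i)$. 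Let $\cH$ be the resulting family of expanders and $t$ the terminal step. By construction, $\cH$ consists of vertex-disjoint $\lambda$-expanders with the required bounds on $d(H)$ and $\delta(H)$, so only the coverage estimate $M \coloneqq \sum_{H \in \cH} |V(H)| \ge n/4$ remains to be proved.

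The key is to bound the edge loss per iteration. Inspection of the proof of \Cref{lem::findexpander} shows that the expander it returns is always an induced subgraph of its input (the procedure only passes to induced subgraphs or deletes single vertices), so each $H_i$ is induced in $G_i$. Using $\Delta(G) \le d$ together with the lower bound $d(H_i) \ge d(1-10\eps)$, the number of edges of $G_i$ incident to $V(H_i)$ is therefore at most
\[
\sum_{v \in V(H_i)} d_{G_i}(v) - e(H_i) \;\le\; |V(H_i)|\, d - |V(H_i)|\, d(1-10\eps)/2 \;=\; |V(H_i)|\, d \bigl(\tfrac{1}{2} + 5\eps\bigr).
\]
Telescoping gives $2 e(G_t) \ge nd(1-\eps) - Md(1+10\eps)$, whence $d(G_t) \ge [nd(1-\eps) - Md(1+10\eps)]/(n-M)$. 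Combining this with the halting inequality $d(G_t) < d(1-10\eps)/(1-2\eps)$ and clearing denominators reduces to a linear inequality in $M$ whose routine solution is $M > n(7+2\eps)/(18-20\eps)$, which exceeds $n/4$ for every $\eps \in [0, 1/10]$.

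The main obstacle is proving the factor-of-two saving in the per-step edge-loss bound. A crude estimate of at most $|V(H_i)|\, d$ edges lost per iteration would allow only $M = O(\eps n)$ before the average degree of the leftover drops below the halting threshold, far short of $n/4$. The saving relies on two features of the iteration: each $H_i$ is an \emph{induced} subgraph of $G_i$, so its internal edges account exactly for $e(H_i)$ rather than some smaller quantity; and $H_i$ has density at least $d(1-10\eps)$, so roughly half of the degree-mass of $V(H_i)$ is absorbed internally rather than being counted as lost.
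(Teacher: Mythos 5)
Your proof is correct and is essentially the paper's argument recast in iterative rather than maximality form: both hinge on the identical edge count (removing an extracted expander $H$ of average degree at least $d(1-10\eps)$ costs at most $d|V(H)| - e(H) \le \tfrac{(1+10\eps)d}{2}|V(H)|$ edges), with the paper aggregating this over a maximal collection and contradicting maximality when the cover is below $n/4$, while you telescope it step by step against a degree-based halting rule. The only caveat is cosmetic: your threshold $d(1-10\eps)/(1-2\eps)$ degenerates for $\eps \ge 1/2$, but in the whole regime $\eps \ge 1/10$ the lemma is vacuous (singleton subgraphs are $\lambda$-expanders with $d(H) = 0 \ge d(1-10\eps)$ and $\delta(H) = d(H)/2$), so this costs nothing.
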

\begin{proof}
	Let $\cH$ be a maximal collection of vertex-disjoint $\lambda$-expanders in $G$ with average degree at least $d(1-10 \eps)$. Let $U \coloneqq \bigcup_{H \in \cH} V(H)$ and let $\overline{U} \coloneqq V(G) \setminus U$. If $|U| \ge n/4$, then the $\lambda$-expanders in $\cH$ satisfy the desired properties, proving the lemma. 

	So suppose $|U| < n/4$. Then, $e(U) + e(U, \overline{U}) \le d|U| - e(U) \le d|U| - \frac{d(1-10\eps)}{2} |U| = \frac{d(1+10\eps)}{2} |U|$. Hence, 
	\begin{align*}
		e(\overline{U}) 
		& \ge e(G) - \left(e(U) + e(U, \overline{U})\right) 
		\ge \frac{d(1-\eps)}{2}n - \frac{d(1+10\eps)}{2} |U| \\
		& = \frac{d}{2} \big((1-\eps)n - (1+10\eps)|U|\big) 
		= \frac{d}{2} \left(|\overline{U}| - \eps n - 10 \eps |U|\right) \\
		& \ge \frac{d}{2} \left(|\overline{U}| - \eps n - 10 \eps \frac{n}{4}\right) 
		\ge \frac{d}{2} \left(|\overline{U}| - 8 \eps |\overline{U}|\right) 
		= \frac{d}{2}(1-8 \eps)|\overline{U}|.
	\end{align*}
	Therefore, $d(\overline{U}) \ge d (1 - 8 \eps).$ Hence, by applying Lemma~\ref{lem::findexpander} to $G[\overline{U}]$, we obtain a $\lambda$-expander $H$ with $d(H) \ge d(\overline{U})(1 - 2 \lambda \log n) \ge d (1 - 8 \eps) (1 - 2 \lambda \log n) \ge d (1 - 8 \eps -2 \lambda \log n) \ge d (1 - 10 \eps)$ and $\delta(H) \ge d(H)/2$. This contradicts the maximality of the collection $\cH$ of vertex-disjoint $\lambda$-expanders. Therefore, $|U| \ge n/4,$ proving the lemma.
\end{proof}


Finally, we prove \Cref{lem:expandercoverweak} by repeatedly applying \Cref{lem:packingoneround}.

\begin{proof}[ of \Cref{lem:expandercoverweak}]
	Let $C = 100$. For every $i \ge 1$, let $\eps_i = C^{i} \eps$, and let $\cH_i$ be a collection of vertex-disjoint $\lambda$-expanders with average degree at least $d(1- \eps_i)$ in $G \setminus (\bigcup_{H \in \cH_1 \cup \ldots \cup \cH_{i-1}} V(H))$ that maximises the number of vertices covered by expanders in $\cH_i$ among all such collections. For all $i \ge 1$, let $V_i \coloneqq \bigcup_{H \in \cH_i} V(H)$. 

	\begin{claim}
		\label{claim:coverbyexpanders}
		For all $i \ge 1$, $\sum_{k = 1}^i |V_k| \ge (1 - (\frac{3}{4})^{i})n$.
	\end{claim}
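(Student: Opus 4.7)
The plan is induction on $i$, in the equivalent form $n_i \le (3/4)^{i-1} n$, where $n_i := |V(G_i)|$ and $G_i := G - (V_1 \cup \dots \cup V_{i-1})$ (with $G_1 = G$). The base case $i=1$ is immediate: applying \Cref{lem:packingoneround} to $G$ with input parameter $10\eps$ (whose hypotheses are met since $\lambda\log n \le \eps$ implies $\lambda\log n \le 10\eps \le 1/10$, and $d(G) \ge d(1-\eps) \ge d(1 - 10\eps)$) yields a packing of $\lambda$-expanders of average degree at least $d(1 - 100\eps) = d(1 - \eps_1)$ covering at least $n/4$ vertices; maximality of $\cH_1$ then forces $|V_1| \ge n/4$, hence $n_2 \le 3n/4$.

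For the inductive step, if $n_i \le (3/4)^i n$ then there is nothing to prove, so I assume $n_i > (3/4)^i n$. My plan is to establish the regularity bound $d(G_i) \ge d(1 - \eps_i/10)$; granted this, \Cref{lem:packingoneround} applied to $G_i$ with input parameter $\eps_i/10$ (whose hypotheses hold because $\lambda\log n_i \le \lambda\log n \le \eps \le \eps_i/10$, using $\eps_i/\eps = C^i \ge 100$) produces a packing of $\lambda$-expanders with average degree at least $d(1 - \eps_i)$ covering at least $n_i/4$ vertices of $G_i$; maximality of $\cH_i$ then gives $|V_i| \ge n_i/4$ and so $n_{i+1} \le 3n_i/4 \le (3/4)^i n$, as required.

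The main ingredient is the lower bound on $d(G_i)$. I would set $L_k := e_{G_k}(V_k) + e_{G_k}(V_k, V(G_k) \setminus V_k)$, the number of edges lost in passing from $G_k$ to $G_{k+1}$. Combining the degree-sum identity $\sum_{v \in V_k} d_{G_k}(v) = 2 e_{G_k}(V_k) + e_{G_k}(V_k, V(G_k) \setminus V_k) \le d|V_k|$ (from $\Delta(G) \le d$) with the lower bound $e_{G_k}(V_k) \ge d(1-\eps_k)|V_k|/2$ arising from the average-degree property of the expanders in $\cH_k$ yields the key edge-loss estimate $L_k \le \tfrac{d}{2}(1+\eps_k)|V_k|$. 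Telescoping $e(G_i) \ge e(G) - \sum_{k<i} L_k$ together with $d(G) \ge d(1-\eps)$ then rearranges to
\begin{equation*}
	d(G_i) \;\ge\; d\Bigl(1 - \tfrac{\eps n}{n_i} - \tfrac{1}{n_i} \sum_{k=1}^{i-1} \eps_k |V_k|\Bigr).
\end{equation*}

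The hard part is then bounding the two error terms by $\eps_i/10$ in total. Using the inductive bound $|V_k| \le n_k \le (3/4)^{k-1} n$, the case assumption $n_i > (3/4)^i n$, and $\eps_k = C^k \eps$ with $C = 100$, the sum $\sum_{k<i} \eps_k |V_k|/n_i$ is a geometric series of ratio $3C/4 = 75$, is dominated by its final term, and comes out to at most $(4/3)\eps_i/(3C/4 - 1) < \eps_i/50$; similarly, $\eps n/n_i \le \eps(4/3)^i \le \eps_i/75$ is negligible compared to $\eps_i = 100^i\eps$. The role of the large constant $C = 100$ is precisely to make these two contributions comfortably sum to less than $\eps_i/10$, closing the induction.
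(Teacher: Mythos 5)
Your proof is correct and follows essentially the same route as the paper's: induction, an edge-counting bound showing the residual graph $G_i$ still has average degree at least $d(1-\eps_i/10)$, a geometric-series estimate on $\sum_k \eps_k|V_k|$ dominated by its last term (with $C=100$ absorbing the constants), and then \Cref{lem:packingoneround} combined with the maximality of $\cH_i$. The only (cosmetic) differences are your reindexing via $n_i$ and your explicit reduction to the case $n_i > (3/4)^i n$, which cleanly supplies the lower bound on the residual set's size that the average-degree computation needs.
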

	\begin{proof}[ of claim] 
		We prove the claim by induction. For $i = 1$, the claim immediately follows by applying Lemma~\ref{lem:packingoneround}
		to $G$ to obtain a collection $\cH_1$ of vertex-disjoint $\lambda$-expanders covering at least $n/4$ vertices and satisfying $d(H) \ge d(1 - 10 \eps) \ge d(1 - C \eps) = d (1 - \eps_1)$ and $\delta(H) \ge d(H)/2$ for every $H \in \cH_1$. This shows that $|V_1| \ge n/4$, as desired.

		Now suppose that for all $1 \le j \le i$, we have $\sum_{k = 1}^j |V_k| \ge (1 - (\frac{3}{4})^j)n$. We will show that $\sum_{k = 1}^{i+1} |V_k| \ge (1 - (\frac{3}{4})^{i+1})n$. 

		Let $U \coloneqq V_1 \cup \ldots \cup V_i$ and let $\overline{U} \coloneqq V(G) \setminus U$. Since for each $1 \le k \le i$, the average degree of the $\lambda$-expanders in $\cH_k$ (whose union spans the vertex set $V_k$) is at least $d(1- \eps_k)$, we have $e(U) \ge \sum_{k=1}^i \frac{d}{2} (1- \eps_k) |V_k| = \frac{d}{2} |U| - \frac{d}{2} \sum_{k=1}^i \eps_k|V_k|$. Hence, 
		\begin{equation}
			\label{eq:claim2:1}
			e(U) + e(U, \overline{U}) \le d |U| - e(U) \le \frac{d}{2} |U| + \frac{d}{2} \sum_{k=1}^i \eps_k|V_k|.
		\end{equation}

Note that since $\eps_1 \le \ldots \le \eps_i$ and $\sum_{k = 1}^j |V_k| \ge (1 - \left(\frac{3}{4}\right)^j)n$ for $1 \le j \le i$, we can bound the sum $\sum_{k=1}^i \eps_k|V_k|$ using the proposition below. 

\begin{prop}\label{prop:weighted-sum-bound}
Let $\eps_1\le \cdots \le \eps_i$ and let $a_1,\ldots,a_i\ge 0$ satisfy $\sum_{k=1}^i a_k \le n$ and
$\sum_{k=1}^j a_k \ge \big(1-(\tfrac34)^j\big)n$ for all $1\le j\le i$.
Then,
\[
\sum_{k=1}^i \eps_k a_k
\le
\sum_{k=1}^{i-1}
\eps_k(\tfrac34)^{k-1}\tfrac n4
+\eps_i(\tfrac34)^{i-1}n .
\]
\end{prop}

\begin{proof}
Note that since $\eps_1\le\cdots\le\eps_i$, the sum 
$\sum_{k=1}^i \eps_k a_k$ 
is maximised when, for each $1\le k\le i$, the value of $a_k$ is taken to be as large 
as permitted by the inequalities 
$\sum_{j=1}^k a_j \ge (1-(\tfrac34)^k)n$ and $\sum_{k=1}^ia_k \le n$, 
assuming that the values of $a_{k+1},\ldots,a_i$ have already been fixed at their 
largest admissible values.

From $\sum_{k=1}^{i-1} a_k \ge (1-(\tfrac34)^{i-1})n$ and $\sum_{k=1}^i a_k \le n$, we obtain
$a_i \le (\tfrac34)^{i-1}n$. By choosing $a_i$ as large as possible, we obtain $a_i = (\tfrac34)^{i-1}n$, which implies $\sum_{k=1}^{i-1}a_k = (1 - (\tfrac34)^{i-1})n$.
Similarly, using
$\sum_{k=1}^{i-2} a_k \ge (1-(\tfrac34)^{i-2})n$ we get
\[
a_{i-1}
\le
\big((\tfrac34)^{i-2}-(\tfrac34)^{i-1}\big)n
=
(\tfrac34)^{i-2}\tfrac n4 .
\]

Again, by choosing $a_{i-1}$ as large as possible, we obtain $a_{i-1} =(\tfrac34)^{i-2}\tfrac n4$. Proceeding inductively, for every $1\le k\le i-1$, we obtain
$a_k = (\tfrac34)^{k-1}\tfrac n4$.
Substituting these values of $a_k$ for $1 \le k \le i-1$, and $a_i$ yields
\[
\sum_{k=1}^i \eps_k a_k
\le
\sum_{k=1}^{i-1}
\eps_k(\tfrac34)^{k-1}\tfrac n4
+\eps_i(\tfrac34)^{i-1}n ,
\]
as required.
\end{proof}
By Proposition~\ref{prop:weighted-sum-bound} (applied with $a_k=|V_k|$), we have
		\begin{align}
			\label{eq:claim2:2}
			\begin{split}
				\sum_{k=1}^i \eps_k|V_k| 
				& \le \sum_{k=1}^{i-1} \eps_k \left(\frac{3}{4}\right)^{k-1} \frac{n}{4} + \eps_i \left(\frac{3}{4}\right)^{i-1} n  \\
				& \le \sum_{k=1}^{i} \frac{4}{3} \eps \left(\frac{3 C}{4}\right)^{k} n 
				\le \frac{\frac{4}{3} \eps}{\frac{3 C}{4}-1} \left(\frac{3 C}{4}\right)^{i+1}n.
			\end{split}
		\end{align}
		Combining \eqref{eq:claim2:1} and \eqref{eq:claim2:2}, we have $$e(U) + e(U, \overline{U}) \le \frac{d}{2} |U| + \frac{\frac{2}{3} \eps d}{\frac{3 C}{4}-1} \left(\frac{3 C}{4}\right)^{i+1}n.$$
		Therefore, 
		\begin{align} \label{claim2:eq3}
			\begin{split}
				e(\overline{U}) \ge e(G) - (e(U) + e(U, \overline{U})) 
				&\ge \frac{1}{2} nd(1-\eps) - \frac{d}{2} |U| - \frac{\frac{2}{3} \eps d}{\frac{3 C}{4}-1} \left(\frac{3 C}{4}\right)^{i+1}n \\  
				&\ge \frac{d}{2} |\overline{U}| - \frac{1}{2} n d \eps - \frac{8\eps d}{9C - 12} \left(\frac{3 C}{4}\right)^{i+1}n. 
			\end{split}
		\end{align}

		Since $\sum_{k = 1}^i |V_k| \ge (1 - (\frac{3}{4})^i)n$, we have $|\overline{U}| \le (\frac{3}{4})^i n$. Thus, by \eqref{claim2:eq3}, 
		\begin{align*}
			d(\overline{U}) 
			= \frac{2 e(\overline{U})}{|\overline{U}|} 
			& \ge d\left(1 - \left(\frac{4}{3}\right)^i \eps \left(1 + \frac{16}{9C - 12} \left(\frac{3 C}{4}\right)^{i+1}  \right) \right) \\
			& = d \left(1 - \eps \left(\left(\frac{4}{3}\right)^i + \frac{4 C^{i+1}}{3C - 4}\right) \right).
		\end{align*}

		Note that since $C = 100$, we have $\left(\frac{4}{3}\right)^i + \frac{4 C^{i+1}}{3C - 4} \le \frac{C^{i+1}}{10}$, so
		\begin{equation*}
			d(\overline{U}) \ge d \left(1 - \frac{\eps C^{i+1}}{10}\right)  = d \left(1 - \frac{\eps_{i+1}}{10}\right).
		\end{equation*}
		Now, note that $\lambda \log |\overline{U}| \le \lambda \log n \le \min \{ \frac{1}{10},  \eps \} \le \min \{ \frac{1}{10},  \frac{C^{i+1} \eps}{10} \} = \min \{ \frac{1}{10},  \frac{\eps_{i+1}}{10} \}$, where in the last inequality we used $C = 100$. Hence, by Lemma~\ref{lem:packingoneround}, there exists a collection $\cH_{i+1}$ of vertex-disjoint $\lambda$-expanders in $G[\overline{U}]$ with $d(H) \ge d(1 - \eps_{i+1})$, and $\delta(H) \ge d(H)/2$ for every $H \in \cH_{i+1}$ such that, if $V_{i+1} = \bigcup_{H \in \cH_{i+1}} V(H)$, then $|V_{i+1}| = |\bigcup_{H \in \cH_{i+1}} V(H)|\ge |\overline{U}|/4$. Therefore, $n - \sum_{k = 1}^{i+1} |V_k| \le \frac{3}{4}|\overline{U}|$. Since $|\overline{U}| \le (\frac{3}{4})^i n$, we have 
		$ n - \sum_{k = 1}^{i+1} |V_k| \le \frac{3}{4}|\overline{U}| \le (\frac{3}{4})^{i+1}n.$ This shows that $\sum_{k = 1}^{i+1} |V_k| \ge (1-(\frac{3}{4})^{i+1})n$, proving the claim.
	\end{proof}

Let $t = \min \{ i \mid  (\frac{3}{4})^{i+1} \le \frac{1}{(\log n)^{\alpha}}\}$. Then, $(\frac{3}{4})^t \ge \frac{1}{(\log n)^{\alpha}}$, so $(\frac{4}{3})^t \le (\log n)^{\alpha}$ which implies that $t \le \frac{\alpha }{\log (4/3)} \log\log n \le 3 \alpha \log\log n$. Thus, $C^{t+1} \le C^{3 \alpha \log\log n + 1} \le C^{4 \alpha \log\log n} \le 2^{28 \alpha \log\log n}  = (\log n)^{28 \alpha}.$ 

Now consider the collection of $\lambda$-expanders $\cH \coloneqq \cH_1 \cup \ldots \cup \cH_{t+1}$. Every $H \in \cH$ is a $\lambda$-expander with $d(H) \ge d(1 - \eps_{t+1}) = d(1 - \eps C^{t+1}) \ge d(1 - \eps (\log n)^{28 \alpha})$, $\delta(H) \ge d(H)/2$, and by Claim~\ref{claim:coverbyexpanders}, these expanders cover at least $\sum_{k = 1}^{t+1} |V_k| \ge (1 - (\frac{3}{4})^{t+1})n \ge (1 - \frac{1}{(\log n)^{\alpha}})n$ vertices of $G$, proving the lemma.
\end{proof}

\subsection{Refining procedure}
 \label{subsec:refiningthepacking}

To prove Lemma~\ref{lem:expandercoverstrong} we use a refining procedure that gradually improves the expansion of the expanders given by Lemma~\ref{lem:expandercoverweak} without significantly weakening their regularity. This is achieved through repeated applications of Lemma~\ref{lem:expandercoverweak}. The main idea of the refining procedure is described in the proof sketch at the end of Section~\ref{sec:proofsketch}.

\begin{proof}[ of Lemma~\ref{lem:expandercoverstrong}]
	For convenience, let $\beta \coloneqq 28 \alpha$ and $\gamma \coloneqq \frac{C - \beta - 1}{C-1}$. 
	Note that by the assumptions of the lemma, $\gamma \ge 1/2$. Let $n_1 = n$, and for every $i \ge 1$, let $n_{i+1}$ satisfy $\log n_{i+1} = (\log n_{i})^{\gamma}$ (so that $\log n_{i+1} = (\log n)^{\gamma^{i}}$). Let $\eps_0 = (\log n_1)^{-(C - 1)}$, and for every $i \ge 1$, let $\eps_i = (\log n_i)^{-(C-\beta-1)}$ and $\lambda_i = (\log n_i)^{-C}$. 

	Let $\cH_0 \coloneqq \{G\}$. Since $d(G) \ge d(1 - \eps_0)$, and $\lambda_1 \log n_1 = (\log n_1)^{-(C - 1)} = \eps_0 \le \frac{1}{10}$, applying Lemma~\ref{lem:expandercoverweak} with $G, \lambda_{1},$ $\alpha$ and $\eps_{0}$ playing the roles of $G, \lambda,$ $\alpha$ and $\eps$ respectively, we obtain a collection $\cH_1$ of $\lambda_1$-expanders such that, for every $H \in \cH_1$, 
we have $d(H) \ge d(1-\eps_{0}(\log n_1)^{28\alpha}) = d(1-\eps_1)$ and 
$\delta(H) \ge d(H)/2$, and moreover $\sum_{H \in \cH_1} |V(H)| \ge (1-\frac{1}{(\log n)^{\alpha}})n$.

	With $\cH_1$ defined as above, we define a sequence $\cH_2, \cH_3, \ldots$, as follows.  For every $i \ge 1$ such that $\eps_i \le \frac{1}{10}$, assuming that $\cH_i$ was already defined, we do the following for each $H \in \cH_i$.
	\begin{itemize}
		\item[$\mathrm{(a)}_i$] If $|V(H)| \le n_{i+1}$ and $d(H) \ge d(1-\eps_{i})$, then let $\cH_{i+1}(H)$ be a collection of vertex-disjoint $\lambda_{i+1}$-expanders in $H$ satisfying $d(F) \ge d(1-\eps_{i} (\log |V(H)|)^{28 \alpha})$ and $\delta(F) \ge d(F)/2$ for every $F \in \cH_{i+1}(H)$, and moreover, $\sum_{F \in \cH_{i+1}(H)} |V(F)| \ge (1 - \frac{1}{(\log |V(H)|)^{\alpha}}) |V(H)|$ (obtained by applying Lemma~\ref{lem:expandercoverweak} with $H, \lambda_{i+1},$ $\alpha$ and $\eps_{i}$ playing the roles of $G, \lambda,$ $\alpha$ and $\eps$ respectively). Note that Lemma~\ref{lem:expandercoverweak} is indeed applicable because $\lambda_{i+1} \log |V(H)| \le \lambda_{i+1} \log n_{i+1} = (\log n_{i+1})^{-(C-1)} = (\log n_{i})^{-\gamma(C-1)} = (\log n_{i})^{-(C - \beta - 1)} = \eps_i \le \frac{1}{10}$.

		\item[$\mathrm{(b)}_i$] Otherwise, let $\cH_{i+1}(H) = \{H\}$.
	\end{itemize}

	Then, we define $\cH_{i+1} \coloneqq \bigcup_{H \in \cH_i} \cH_{i+1}(H)$. We will now show that the following claim holds.

	\begin{claim}
		\label{claim:refiningcovers}
		For every $i \ge 1$, if $\eps_{i-1} \le \frac{1}{10}$, then the following holds.
		\begin{itemize}
			\item[$\mathrm{(A)}_i$] For every $H \in \cH_{i}$, we have $d(H) \ge d(1 - \eps_{i})$ and $\delta(H) \ge d(H)/2$. 

			\item[$\mathrm{(B)}_i$] $\sum_{H \in \cH_{i}} |V(H)| \ge (1 - \frac{i}{(\log \log n)^{\alpha}})n.$
		\end{itemize}
	\end{claim}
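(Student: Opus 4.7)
The plan is to establish the claim by induction on $i$, with the inductive statement being: if $\eps_{i-1} \le 1/10$, then $(A)_i$ and $(B)_i$ both hold.

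For the base case $i = 1$, the construction of $\cH_1$ directly applies \Cref{lem:expandercoverweak} to $G$ with parameters $\lambda_1 = (\log n)^{-C}$, $\alpha$ and $\eps_0 = (\log n)^{-(C-1)}$; this is valid since $\lambda_1 \log n = \eps_0 \le 1/10$. The conclusion of that lemma immediately gives $(A)_1$ and yields $\sum_{H \in \cH_1}|V(H)| \ge (1 - (\log n)^{-\alpha})n \ge (1 - (\log\log n)^{-\alpha})n$, which is $(B)_1$.

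For the inductive step, assume $(A)_i$ and $(B)_i$ and suppose $\eps_i \le 1/10$ (which forces $\eps_{i-1} \le 1/10$ by monotonicity of $(\eps_j)$). Fix $H \in \cH_{i+1}$ and let $H' \in \cH_i$ be such that $H \in \cH_{i+1}(H')$. In case $\mathrm{(b)}_i$ we have $H = H'$, so $(A)_{i+1}$ is inherited from $(A)_i$ using $\eps_i \le \eps_{i+1}$, a consequence of the fact that the sequence $n_j$ is decreasing while $C - 28\alpha - 1 > 0$ by hypothesis on $C$. In case $\mathrm{(a)}_i$, \Cref{lem:expandercoverweak} is applied to $H'$ and delivers
\[
d(H) \ge d\bigl(1 - \eps_i (\log|V(H')|)^{28\alpha}\bigr), \qquad \delta(H) \ge d(H)/2,
\]
and the required bound $d(H) \ge d(1 - \eps_{i+1})$ then follows from $|V(H')| \le n_{i+1}$ together with the parameter-design identity
\[
\eps_i \cdot (\log n_{i+1})^{28\alpha} = (\log n_i)^{-(C-28\alpha-1)} \cdot (\log n_i)^{28\alpha(C-28\alpha-1)/(C-1)} = (\log n_{i+1})^{-(C-28\alpha-1)} = \eps_{i+1},
\]
which uses $\log n_{i+1} = (\log n_i)^{(C-28\alpha-1)/(C-1)}$.

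For $(B)_{i+1}$, observe that vertices are lost in passing from $\cH_i$ to $\cH_{i+1}$ only through case $\mathrm{(a)}_i$ refinements, and in each such refinement the loss is at most $|V(H')|/(\log|V(H')|)^\alpha$. The crucial observation is the lower bound $|V(H')| \ge \Delta(H') + 1 \ge d(H') \ge d(1 - \eps_i) \ge 9d/10 \ge \log n$, obtained from $(A)_i$, the hypothesis $\eps_i \le 1/10$, and $d \ge 2\log n$. This gives $(\log|V(H')|)^\alpha \ge (\log\log n)^\alpha$, so the total loss over step $i+1$ is at most $n/(\log\log n)^\alpha$; combined with $(B)_i$ this yields $(B)_{i+1}$. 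The main subtlety is the exponent identity connecting $\eps_i$, $n_{i+1}$ and $\eps_{i+1}$, but this is really the guiding principle behind the very choice of the sequences $(n_j)$ and $(\eps_j)$, so once observed it reduces to the short algebraic computation shown above; no other step goes beyond routine verification.
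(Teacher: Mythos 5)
Your proof is correct and follows essentially the same route as the paper's: induction on $i$, with $(A)_{i+1}$ reduced to the exponent identity $\eps_i(\log n_{i+1})^{28\alpha}=\eps_{i+1}$ coming from $\log n_{i+1}=(\log n_i)^{(C-28\alpha-1)/(C-1)}$, and $(B)_{i+1}$ obtained from the lower bound $|V(H')|\ge d(1-\eps_i)\ge \log n$ so that each refinement loses at most a $(\log\log n)^{-\alpha}$ fraction. The only differences are cosmetic (you keep the exponent computation in base $\log n_i$ and account for the loss additively rather than multiplicatively).
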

	\begin{proof}[ of claim]
		We prove the claim by induction. By our choice of $\cH_{1}$ as discussed above, we know that $\mathrm{(A)}_1$ holds, and that $\sum_{H \in \cH_1} |V(H)| \ge (1-\frac{1}{(\log n)^{\alpha}})n \ge (1 - \frac{1}{(\log \log n)^{\alpha}})n$, so $\mathrm{(B)}_1$ holds as well. 
		Now suppose $\mathrm{(A)}_i$ and $\mathrm{(B)}_i$ hold for $i = k$ for some $k \ge 1$, and we will show that they hold for $i = k+1$. Suppose that $\eps_k \le \frac{1}{10}$ (otherwise there is nothing to prove).

		Indeed, for every $H \in \cH_k$ with $|V(H)| > n_{k+1}$, it follows from $\mathrm{(b)}_k$ that $\cH_{k+1}(H) = \{H\}$. So we have $d(H) \ge d(1 - \eps_k) \ge d(1 - \eps_{k+1})$ and $\delta(H) \ge d(H)/2$ by $\mathrm{(A)}_k$. For every $H \in \cH_k$ with $|V(H)| \le n_{k+1}$, it follows from $\mathrm{(a)}_k$ and $\mathrm{(A)}_k$ that $d(F) \ge d(1-\eps_{k} (\log |V(H)|)^{28 \alpha})$ and $\delta(F) \ge d(F)/2$ for every $F \in \cH_{k+1}(H)$. Moreover, notice that we have 
		\begin{align*}
			\eps_{k} (\log |V(H)|)^{28 \alpha} 
			& \le \eps_{k} (\log n_{k+1})^{28 \alpha} \\
			& = (\log n_k)^{-(C-\beta-1)} (\log n_{k+1})^{\beta} \\
			& = (\log n_{k+1})^{-\frac{(C-\beta-1)}{\gamma}} (\log n_{k+1})^{\beta} 
			= (\log n_{k+1})^{-(C-\beta -1)} 
			= \eps_{k+1}.
		\end{align*}
		Therefore, for every $F \in \cH_{k+1}(H)$, we have $d(F) \ge d(1 - \eps_{k+1})$ in both cases. This proves that $\mathrm{(A)}_{k+1}$ holds. 
		Now our goal is to prove $\mathrm{(B)}_{k+1}$.  Note that 
		\begin{equation}
			\label{eq:uncoveredverticesHk+1}
			\sum_{H \in \cH_{k+1}} |V(H)| = \sum_{H \in \cH_{k}} \sum_{F \in \cH_{k+1}(H)} |V(F)| \ge  \sum_{H \in \cH_k} \left(1 - \frac{1}{(\log |V(H)|)^{\alpha}} \right) |V(H)|, 
		\end{equation}
		where, in the last inequality we used $\mathrm{(a)}_k$ in the case when $|V(H)| \le n_{k+1}$, and in the case when $|V(H)| > n_{k+1}$, $\mathrm{(b)}_k$ trivially implies that  $\sum_{F \in \cH_{k+1}(H)} |V(F)| = |V(H)|$.  Now note that, for every $H \in \cH_k$ we have $|V(H)| \ge d(H) \ge d(1 - \eps_k) \ge \frac{9 d }{10} \ge \log n$. Hence, $1 - \frac{1}{(\log |V(H)|)^{\alpha}} \ge 1 - \frac{1}{(\log \log n)^{\alpha}}$. Therefore, by \eqref{eq:uncoveredverticesHk+1}, and our assumption that $\mathrm{(B)}_k$ holds, we have 
		$$ \sum_{H \in \cH_{k+1}} |V(H)| \ge \left(1 - \frac{1}{(\log \log n)^{\alpha}}\right) \left(1 - \frac{k}{(\log \log n)^{\alpha}} \right) n \ge \left(1 - \frac{k+1}{(\log \log n)^{\alpha}}\right) n.$$
		This proves that $\mathrm{(B)}_{k+1}$ holds, and completes the proof of the claim.
	\end{proof}
	Let $t = \max \{ i \mid \log n_i \ge 4 \}$. Then, $4 \ge \log n_{t+1} = (\log n_t)^{\gamma} \ge (\log n_t)^{1/2} \ge 2.$ In particular, $n_{t+1} \le 16$. Since, by the assumptions of Lemma~\ref{lem:expandercoverstrong}, $C \ge \beta + 3$, we have $\eps_t = (\log n_t)^{-(C-\beta-1)} \le 4^{-(C-\beta-1)} \le \frac{1}{10}$. Moreover, since $\eps_{t-1} \le \eps_t$, we also have $\eps_{t-1} \le \frac{1}{10}$, so by $\mathrm{(A)}_t$ of Claim~\ref{claim:refiningcovers}, for every $H \in \cH_{t}$, we have $d(H) \ge d(1 - \eps_{t}) \ge d(1 - \frac{1}{10}) > 16 \ge n_{t+1}$ and $\delta(H) \ge d(H)/2$. Therefore, for every $H \in \cH_t$, we have $|V(H)| \ge d(H) > n_{t+1}$ and thus $\cH_{t+1} = \cH_t$. 
	
    Now let $H \in \cH_t$, and let $j$ be the smallest index such that $H \in \cH_j$. 
	Then, $H$ is a $\lambda_j$-expander with $d(H) \ge d(1 - \eps_j)$.
	We claim that $n_j \ge |V(H)| > n_{j+1}$. Indeed, the upper bound clearly holds if $j = 1$. Otherwise, since $H \notin \cH_{j-1}$, there exists $H' \in \cH_{j-1}$ such that $|V(H')| \le n_j$ and $H \in \cH_j(H')$. Since $H$ is a subgraph of $H'$, it follows that $|V(H)| \le n_j$, as desired.
	For the lower bound, notice that $H \in \cH_j \cap \cH_{j+1}$ (otherwise, $H$ would not be in $\cH_{t}$), showing that $|V(H)| > n_{j+1}$.
	Recall that, as defined in the statement of Lemma~\ref{lem:expandercoverstrong}, $\lambda_H = (\log |V(H)|)^{-\frac{C(C-1)}{C-28 \alpha - 1}}$, and $\eps_H = (\log |V(H)|)^{-(C-28 \alpha - 1)}$.
	Hence,
	\begin{align*}
		\lambda_j 
		= (\log n_j)^{-C} 
		= (\log n_{j+1})^{-C/\gamma} 
		= (\log n_{j+1})^{\frac{-C(C-1)}{C-\beta-1}} 
		\ge (\log |V(H)|)^{\frac{-C(C-1)}{C-\beta-1}} 
		= \lambda_H,
	\end{align*}
	and that 
	\begin{equation*}
		\eps_j = (\log n_j)^{-(C-\beta-1)} \le (\log |V(H)|)^{-(C-\beta-1)} = \eps_H.
	\end{equation*}
	Hence, every $H \in \cH_t$ is a $\lambda_H$-expander with $d(H) \ge d(1 - \eps_H)$ and $\delta(H) \ge d(H)/2$. Finally, since $(\log n)^{\gamma^{t-1}} = \log n_{t} \ge 4$, we have $t \le \frac{\log \log \log n}{\log (1/\gamma)}+1 \le (\log \log \log n)^2$. Thus, by $\mathrm{(B)}_t$ of Claim~\ref{claim:refiningcovers}, we have $\sum_{H \in \cH_{t}} |V(H)| \ge (1 - \frac{t}{(\log \log n)^{\alpha}})n \ge (1 - \frac{(\log \log \log n)^2}{(\log \log n)^{\alpha}})n.$ This shows that $\cH_t$ is a collection of vertex-disjoint subgraphs of $G$ satisfying the desired properties, completing the proof of Lemma~\ref{lem:expandercoverstrong}.
\end{proof}

\section{Connecting vertex pairs through a random vertex subset of an expander}
\label{sec:connectingthroughrandom}

In this section, we prove the following generalization of a lemma from \cite{letzter2024separating}, which allows for connecting pairs of vertices $(x_1, y_1), \ldots, (x_r, y_r)$ in a robust sublinear expander through a random set of vertices using vertex-disjoint paths, if all subsets of the set $\{x_1, \ldots, x_r, y_1, \ldots, y_r\}$ expand well. This is a variant of Theorem~16 in \cite{bucic2022erdos}, which only requires the paths to be edge-disjoint. The proof follows \cite{bucic2022erdos} closely, which, in turn, uses some ideas from Tomon~\cite{tomon2024robust}.

	\begin{lemma} \label{lem:connecting}
		Let $2^{-9} \le \eps < 1$, $c > 0$, $0 < q < 1$, $s \ge \frac{2 (\log n)^{9c+21}}{q^{10}}$, and let $n$ be sufficiently large. Suppose that $G$ is an $n$-vertex $(\eps, c, s)$-expander, and let $V$ be a random subset of $V(G)$ obtained by including each vertex independently with probability $q$. Then, with probability at least $1 - \frac{1}{n}$, the following holds. If $x_1, \ldots, x_r, y_1, \ldots, y_r$ are distinct vertices not contained in $V$ such that $|N(X)| \ge \frac{100 (\log n)^{7c+19}}{q^6}|X|$ for every $X \subseteq \{x_1, \ldots, x_r, y_1, \ldots, y_r\}$, then there is a sequence of paths $P_1, \ldots, P_r$, each of length at most $(\log n)^{c+4}$, such that for each $i \in [r]$, $P_i$ is a path from $x_i$ to $y_i$ with internal vertices in $V$, and the paths $P_1, \ldots, P_r$ are pairwise vertex-disjoint.
	\end{lemma}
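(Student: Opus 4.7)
Proof proposal: I would adapt the BFS-based connecting strategy of Buci\'c–Montgomery~\cite{bucic2022erdos} (refined by Tomon~\cite{tomon2024robust} and already adapted in \cite{letzter2024separating} for the vertex-disjoint setting and in \cite{chakraborti2024edge} for parameter tracking), working with the general parameter $c$ here. The paths $P_1, \ldots, P_r$ will be constructed sequentially: to build $P_i$, I run a BFS from $x_i$ and from $y_i$ whose layers grow inside the random set $V$, while the internal vertices of $P_1, \ldots, P_{i-1}$ together with the remaining endpoints are treated as forbidden.

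\textbf{Step 1: Good events for $V$.} Before any endpoints or paths are revealed, I would expose $V$ and show that with probability at least $1-\tfrac{1}{n}$ the following holds. For every vertex $v$ with $|N(v)| \ge \frac{100(\log n)^{7c+19}}{q^6}$, we have $|N(v) \cap V| \ge \tfrac{q}{2}|N(v)|$; and for every $U \subseteq V(G)$ with $|U|$ above a suitable polylog threshold and every $F \subseteq E(G)$ with $|F| \le s|U|$,
\[
 |N_{G-F}(U) \cap V| \;\ge\; \frac{q\eps}{2(\log n)^c}\,|U|.
\]
Each individual bound follows from Chernoff (\Cref{chernoff}) applied to $|N_{G-F}(U)\cap V|$ (which is a sum of independent indicators of expectation $q \cdot |N_{G-F}(U)| \ge q\eps|U|/(\log n)^c$ by the expander hypothesis); a union bound over an encoding of the relevant $(U,F)$ (each described by a BFS starting vertex together with at most $s|U|$ forbidden edges, giving at most $n^{O(s \log n)}$ descriptions) is absorbed by the superpolynomial Chernoff tail, which is exactly why the hypotheses $s \ge 2(\log n)^{9c+21}/q^{10}$ and $|N(X)|\ge 100(\log n)^{7c+19}|X|/q^6$ are tuned with their specific polylog exponents.

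\textbf{Step 2: Sequential BFS.} Condition on the good event of Step~1 and reveal the endpoints (the event is independent of this choice). Suppose $P_1,\ldots,P_{i-1}$ have been built, each of length at most $(\log n)^{c+4}$. Let $W_i$ consist of the internal vertices of these paths together with $\{x_j,y_j:j\neq i\}$. Define the BFS layers $U_0=\{x_i\}$, $U_{k+1}=U_k\cup\bigl(N(U_k)\cap V\setminus W_i\bigr)$, and analogously from $y_i$, stopping when the two balls intersect or either reaches size $\tfrac{n}{3}$. The first layer already satisfies $|U_1|\ge |N(x_i)\cap V|-|W_i|\ge \frac{50(\log n)^{7c+19}}{q^5} - |W_i|$ by the endpoint hypothesis for $X=\{x_i\}$; the endpoint hypothesis applied to $X=\{x_1,\ldots,y_r\}$ forces $2r\cdot 100(\log n)^{7c+19}/q^6\le n$, so $|W_i|\le r(\log n)^{c+4}$ is negligible compared to the main term and $|U_1|$ already exceeds the polylog threshold of Step~1.

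\textbf{Step 3: Growth and meeting.} For $k\ge 1$, apply the $(\eps,c,s)$-expander property to $U_k$ with $F$ equal to the edges between $U_k$ and $W_i$; the bound $s\ge 2(\log n)^{9c+21}/q^{10}$ is tuned to absorb $|F|$ at every relevant scale, so combining with Step~1 yields $|U_{k+1}|\ge |U_k|\bigl(1+\tfrac{q\eps}{4(\log n)^c}\bigr)$. Hence $|U_k|$ reaches $\tfrac{n}{3}$ after at most $\tfrac{1}{2}(\log n)^{c+4}$ BFS steps. Once both balls exceed $\tfrac{n}{3}$, one more application of the expander property (to the smaller ball, with $|F|$ bounded as before) produces an edge between them; backtracking the two BFS-trees gives the desired $P_i$ from $x_i$ to $y_i$, of length at most $(\log n)^{c+4}$, with internal vertices in $V\setminus W_i$. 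Iterating over $i=1,\ldots,r$ yields the required family of pairwise vertex-disjoint paths.

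\textbf{Main obstacle.} The delicate part is Step~1: we need the random-subset concentration of $|N_{G-F}(U)\cap V|$ to hold \emph{uniformly} over all $(U,F)$-pairs that can conceivably arise in the adaptive sequential BFS for any endpoints consistent with the hypothesis. The resolution is to union-bound over an encoding of BFS-like objects of quasi-polynomial size and to calibrate the polylog exponents in $s$ and in the endpoint bound so that the Chernoff tail dominates the union bound; this is precisely what dictates the specific exponents $7c+19$, $9c+21$, $c+4$ appearing in the statement. Once Step~1 is secured, the remainder of the argument is essentially deterministic and a routine adaptation of the standard template.
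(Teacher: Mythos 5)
Your proposal has two genuine gaps, and both are exactly the points that force the paper's proof to take a different shape. First, the sequential greedy construction in Steps 2--3 does not work. The hypothesis only gives $|N(x_i)| \ge \frac{100(\log n)^{7c+19}}{q^6}$ for an individual endpoint, a polylogarithmic quantity, whereas your forbidden set $W_i$ has size up to roughly $r(\log n)^{c+4}$; since $r$ can be as large as $\frac{nq^6}{200(\log n)^{7c+19}}$ (the only upper bound on $r$ implied by the hypothesis is $|N(\{x_1,\dots,y_r\})| \le n$), the set $W_i$ can be polynomially large and can contain \emph{all} of $N(x_i)$. Your claim that $|W_i|$ is ``negligible compared to the main term'' is therefore false, and the BFS from a later endpoint need not even take its first step. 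This is precisely why the paper only proves the weaker statement (\Cref{lem:hall-condition}) that for every small forbidden vertex set \emph{some} pair $(x_i,y_i)$ can be connected --- and even that requires a halving argument on the collective expansion of subsets of endpoints, not a per-vertex argument --- and then invokes the Aharoni--Haxell hypergraph version of Hall's theorem (\Cref{thm:hall}) to upgrade ``some pair, against every small obstruction'' to a simultaneous system of vertex-disjoint paths. Some device of this kind is unavoidable; a fixed greedy order cannot succeed.

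Second, the uniform concentration in your Step 1 is quantitatively impossible as stated. For a fixed $(U,F)$ the Chernoff tail for $|N_{G-F}(U)\cap V|$ is only $\exp\left(-\Omega\left(\frac{q\eps|U|}{(\log n)^c}\right)\right)$, while the number of forbidden edge sets $F$ with $|F|\le s|U|$ is of order $\exp\left(\Theta(s|U|\log n)\right)$, and $s \ge \frac{2(\log n)^{9c+21}}{q^{10}}$ is a large polylogarithm --- so the union bound over $F$ overwhelms the tail by a huge margin (and there is no quasi-polynomial encoding of the adaptively arising $(U,F)$ pairs). The paper never union-bounds over edge sets: the robustness parameter $s$ is consumed \emph{deterministically} inside the expander property (via \Cref{prop:strong-expansion} and \Cref{prop:stars}), the forbidden objects are small \emph{vertex} sets $Z$, and the probabilistic union bound in \Cref{cor:expansion-random} is restricted to sets $U$ that ``expand well'' (so that the single-event failure probability $\exp(-\Omega(|U|(\log n)^2))$ from \Cref{lem:expansion-random} beats the $n^{2u}$ choices of $(U,Z)$), with general $U$ reduced to this case by passing to a random well-expanding subset. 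Your instinct about which exponents are doing the work is right, but the mechanism that makes the numbers close is the one above, not a direct union bound over robustly forbidden edge sets.
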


	We will introduce the relevant preliminaries in \Cref{subsec:prelims}, and then prove \Cref{lem:connecting} in three steps, as detailed in \Cref{subsec:random-step-one,subsec:expansion-step-two,subsec:expansion-step-three}.

\subsection{Preliminaries} \label{subsec:prelims}

We will use \Cref{prop:strong-expansion} and \Cref{prop:stars}, which show that every vertex set $U$ in an expander (provided $U$ is not too large) either \emph{expands well}, meaning it has a large neighbourhood, or \emph{expands robustly}, meaning that there are many vertices in $N(U)$ with many neighbours in $U$. These two statements are slight variations of propositions from~\cite{bucic2022erdos}. The main differences are that here we do not consider a set of forbidden edges (as we do not need this feature) and that we introduce a new parameter $c$ controlling the rate of expansion of our expanders.

First, we adapt Proposition 12 from \cite{bucic2022erdos}. For any graph $G$, parameter $d$, and $U \subseteq V(G)$, let $N_{G,d}(U) \coloneqq \{v \in V (G) \setminus U : |N_G(v) \cap U| \ge d\}$ i.e., $N_{G,d}(U)$ is the set of vertices in $G$ outside $U$ which have degree at least $d$ in $U$.

	\begin{prop}
		\label{prop:strong-expansion}
		Let $\eps, c > 0$, and let $0 < d \le s$. Suppose that $G$ is an $n$-vertex $(\eps, c, s)$-expander. Then, for every set $U \subseteq V(G)$ with $|U| \le \frac{2n}{3}$, either
		\[
			\text{\textbf{\emph{a)}} }\;\;|N_{G}(U)| \ge \frac{s|U|}{2d},\;\;\;\text{ or }\;\;\;\text{\textbf{\emph{b)}}}\;\; |N_{G,d}(U)|\ge \frac{\eps|U|}{(\log n)^{c}}.
		\]
	\end{prop}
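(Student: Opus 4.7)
The plan is to prove the dichotomy by contradiction: assume option a) fails, meaning $|N_G(U)| < \frac{s|U|}{2d}$, and deduce that option b) must hold. The key idea is to exploit the robustness of the expansion — the parameter $s$ permits us to delete a substantial set of edges before losing expansion — in order to ``kill off'' the neighbors of $U$ that have few back-edges to $U$, so that only the high-degree (into $U$) neighbors remain as witnesses to expansion.

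Concretely, I would introduce the edge set
\[
	F = \{uv \in E(G) : u \in U,\ v \in N_G(U) \setminus N_{G,d}(U)\},
\]
that is, all edges from $U$ to those neighbors that are \emph{not} in $N_{G,d}(U)$. By the definition of $N_{G,d}(U)$, every vertex $v \in N_G(U) \setminus N_{G,d}(U)$ contributes at most $d-1$ edges to $F$, so
\[
	|F| \le (d-1)\,|N_G(U)| < d \cdot \frac{s|U|}{2d} = \frac{s|U|}{2} \le s|U|,
\]
using the assumption that a) fails. This makes $F$ an admissible forbidden edge set for the expansion property, so (assuming $|U| \ge 1$; the case $|U|=0$ is trivial) \Cref{defn:robust-sublinear-expansion} gives
\[
	|N_{G-F}(U)| \ge \frac{\eps}{(\log n)^c}\,|U|.
\]

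To finish, I would argue that $N_{G-F}(U) \subseteq N_{G,d}(U)$. Indeed, any $v \in N_G(U) \setminus N_{G,d}(U)$ has \emph{all} of its edges into $U$ placed in $F$ by construction, so $v$ has no neighbor in $U$ within $G - F$, hence $v \notin N_{G-F}(U)$. Also, no vertex outside $N_G(U) \cup U$ can lie in $N_{G-F}(U)$, since removing edges cannot create new neighbors. Therefore $N_{G-F}(U) \subseteq N_{G,d}(U)$, and combining with the lower bound above yields option b). I do not anticipate significant obstacles here; the only detail requiring care is verifying the bound $|F| \le s|U|$ so that the expansion property is legitimately applicable, and this falls out cleanly from the negation of a).
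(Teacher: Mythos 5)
Your proposal is correct and follows essentially the same argument as the paper: negate a), take $F$ to be the edges from $U$ to its low-degree neighbours, bound $|F| < s|U|/2$, and apply the robust expansion of $G-F$, noting $N_{G-F}(U)\subseteq N_{G,d}(U)$. The paper phrases this as the equality $N_{G-F}(U)=N_{G,d}(U)$ and bounds $|F|$ via $|X|\cdot d$ rather than $(d-1)|N_G(U)|$, but these are cosmetic differences.
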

	\begin{proof}
		Suppose \textbf{a)} is not satisfied, so that $|N_{G}(U)|<\frac{s|U|}{2d}$. Let $X=N_{G}(U)\setminus N_{G,d}(U)$, so that $|X|<\frac{s|U|}{2d}$. Let $F$ be the edges of $G$ between $U$ and $X$, so that $|F| < |X|d\leq s|U|/2$. Note that, by the definition of $F$, we have $N_{G,d}(U)=N_{G-F}(U)$. As $G$ is an $(\eps, c, s)$-expander, we have
		\[
			|N_{G,d}(U)|=|N_{G-F}(U)|\geq \frac{\eps|U|}{(\log n)^{c}},
		\]
		and therefore \textbf{b)} holds, as required. 
	\end{proof}

	Now we adapt Proposition 13 from \cite{bucic2022erdos} which shows that more structure can be found in both outcomes of the above proposition.

	\begin{lemma} \label{prop:stars}
		There is an $n_0$ such that the following holds for all $n\geq n_0$, $2^{-9} < \eps< 1$, $c > 0$, $r, t \geq (\log n)^2$ and $s\geq 20rt$. Let $G$ be an $n$-vertex $(\eps, c, s)$-expander, let $U \subseteq V (G)$ satisfy $|U| \leq 2n/3$. Then, in $G$ we can find either
		\begin{enumerate}[label=\rm{(\alph*)}]
			\item $\frac{|U|}{10r}$ pairwise vertex-disjoint stars of size $t$, whose centers are in $U$ and whose leaves are in $V(G)\setminus U$, or
			\item \label{propstars:findH} a bipartite subgraph $H$ with vertex classes $U$ and $X\subseteq V(G)\setminus U$ such that
				\begin{itemize}
					\item $|X|\geq \frac{\eps |U|}{2 (\log n)^c}$ and
					\item every vertex in $X$ has degree at least $r$ in $H$ and every vertex in $U$ has degree at most $2t$ in $H$.
				\end{itemize}
		\end{enumerate}
	\end{lemma}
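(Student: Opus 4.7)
The plan is to apply Proposition~\ref{prop:strong-expansion} to the set $U$ with $d = r$, which is legitimate since $r \le s$ (using $s \ge 20rt \ge r$). This returns one of two outcomes: either $|N_G(U)| \ge s|U|/(2r) \ge 10t|U|$, or $|N_{G,r}(U)| \ge \eps|U|/(\log n)^c$. We focus on the second outcome (the first will be handled by a completely analogous argument, with the same star-extraction step at its core).

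Set $X_0 := N_{G,r}(U)$, and form the bipartite graph $H_0 \subseteq G[U, X_0]$ by retaining for each $x \in X_0$ exactly $r$ of its neighbours in $U$; so $e(H_0) = r|X_0|$. Split $U$ into $U_{\text{hi}} = \{u : \deg_{H_0}(u) > 2t\}$ and $U_{\text{lo}} = U \setminus U_{\text{hi}}$. A straightforward edge count gives $|U_{\text{hi}}| < r|X_0|/(2t)$. The key dichotomy will be whether the vertices in $X_0$ are, on average, more connected to $U_{\text{lo}}$ or to $U_{\text{hi}}$.

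If at least half of $X_0$ satisfies $|N_{H_0}(x) \cap U_{\text{lo}}| \ge r/2$, then restricting $H_0$ to the edges between these ``good'' vertices of $X_0$ and $U_{\text{lo}}$ produces essentially the bipartite subgraph $H$ required by outcome~(b): every such $x$ has degree at least $r/2$ into $U_{\text{lo}}$, every $u \in U_{\text{lo}}$ has degree at most $2t$, and the size condition $|X| \ge |X_0|/2 \ge \eps|U|/(2(\log n)^c)$ holds. To recover the sharp degree threshold of $r$ on the $X$ side, we would instead apply Proposition~\ref{prop:strong-expansion} with the slightly larger $d = 2r$ (still valid since $2r \le s$), providing a factor-of-two slack that survives the restriction.

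Otherwise, more than half of $X_0$ has at least $r/2$ neighbours in $U_{\text{hi}}$, yielding a dense bipartite subgraph $G[U_{\text{hi}}, X_0]$, and we aim to extract $|U|/(10r)$ vertex-disjoint stars of size $t$ centred in $U_{\text{hi}}$ with leaves in $X_0$, giving outcome~(a). This star-extraction step is the main obstacle: a naive greedy procedure may fail because the neighbourhoods of vertices in $U_{\text{hi}}$ can overlap substantially, so leaves used by early stars can block later ones. To resolve this we plan to set up a defect-Hall argument for a $t$-matching in $G[U_{\text{hi}}, X_0]$, using the robust expansion of $G$ -- either directly through the $(\eps, c, s)$-condition or through a further invocation of Proposition~\ref{prop:strong-expansion} on a carefully chosen subset -- to guarantee that every small $S \subseteq U_{\text{hi}}$ has neighbourhood in $X_0$ of size at least $t|S|$. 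The slack $s \ge 20rt$ is precisely what is needed to absorb any edges deleted during such a Hall-type argument.
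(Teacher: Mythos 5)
Your overall architecture does not match how the two outcomes of the lemma actually arise, and the branch that is supposed to produce outcome (a) has a gap that I do not think can be repaired in the form you propose. You fix $X_0 := N_{G,r}(U)$ at the outset, with only the guarantee $|X_0| \ge \eps|U|/(\log n)^c$, and then in the ``dense into $U_{\mathrm{hi}}$'' sub-case you try to extract $\frac{|U|}{10r}$ pairwise vertex-disjoint stars of size $t$ with centres in $U_{\mathrm{hi}}$ and leaves in $X_0$. This needs $\frac{t|U|}{10r}$ distinct leaves inside $X_0$ and at least $\frac{|U|}{10r}$ centres inside $U_{\mathrm{hi}}$, but neither is available: $|X_0|$ can be as small as $\eps|U|/(\log n)^c$, which is far below $\frac{t|U|}{10r}$ for, say, $r = t = (\log n)^2$ and $c \ge 4$; and your own bound $|U_{\mathrm{hi}}| < r|X_0|/(2t)$ then makes $U_{\mathrm{hi}}$ too small to supply the centres. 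The defect-Hall condition you hope to establish, $|N(S) \cap X_0| \ge t|S|$ for $S \subseteq U_{\mathrm{hi}}$, cannot come from the $(\eps,c,s)$-expansion, which only yields the shrinking factor $\eps/(\log n)^c$; and it fails outright for $S = U_{\mathrm{hi}}$ whenever $t|U_{\mathrm{hi}}| > |X_0|$. The essential point is that outcome (a) requires leaves anywhere in $V(G)\setminus U$, and no argument confined to $N_{G,r}(U)$ can deliver it. (Your handling of the first outcome of Proposition~\ref{prop:strong-expansion}, and the $d = 2r$ fix for the $r/2$ degree loss in the outcome-(b) branch, are fine.)

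The paper's proof inverts the order of operations, and this is what makes it work. It first takes a \emph{maximal} collection of vertex-disjoint stars with $t$ leaves, centres in $U$ and leaves anywhere in $V(G)\setminus U$; if this collection has at least $\frac{|U|}{10r}$ stars we are done, and otherwise maximality gives the global bound $|N_G(U\setminus C)| < 2t|U|$, where $C$ is the set of centres. It then builds $H$ by a greedy pass over \emph{all} of $V(G)\setminus U$ (adding $v$ with $r$ neighbours in $U$ whenever the $2t$-degree cap permits), and only at the end applies Proposition~\ref{prop:strong-expansion} to a large trimmed set $U\setminus B$ (removing $C$ and the $2t$-saturated vertices, which together occupy at most half of $U$). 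The bound $|N_G(U\setminus C)| < 2t|U|$ rules out the large-neighbourhood outcome of the proposition for $U \setminus B$, and every vertex of $N_{G,r}(U\setminus B)$ must already lie in $X$, giving the size bound in (b). If you want to salvage your write-up, you should restructure it along these lines rather than trying to complete the Hall-type step.
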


	\begin{proof}
		Take a maximal collection $\mathcal C$ of pairwise vertex-disjoint stars in $G$ with $t$ leaves, centres in $U$ and leaves outside of $U$. Let $C \subseteq U$ be the set of centres of these stars and $L\subseteq V(G)\setminus U$ be the set consisting of all their leaves. Suppose \textbf{a)} does not hold. Then, we can assume that $|C| \le \frac{|U|}{10r}$ and thus $|L| = |C|\cdot t \le \frac{|U|}{10r}\cdot t$, and, by the maximality of $\mathcal C$, that there is no vertex in $U\setminus C$ with at least $t$ neighbours in $G$ in $V(G)\setminus (U\cup L)$. Thus,
		\begin{equation}\label{eqn:NGW}
			|N_{G}(U\setminus C)|\leq |C|+|L|+|U\setminus C|\cdot t\leq  \frac{|U|}{10 r}+|C|\cdot t +|U\setminus C|\cdot t< 2|U|\cdot t.
		\end{equation}

		We now construct a set $X\subseteq V(G)\setminus U$ and a bipartite subgraph $H$ with vertex classes $U$ and $X$ using the following process, starting with $X_0=\emptyset$ and setting $H_0$ to be the graph with vertex set $U\cup X_0$ and no edges. Let $k=|V(G)\setminus U|$ and label the vertices of $V(G)\setminus U$ arbitrarily as $v_1,\ldots,v_k$. For each $i\geq 1$, if possible, pick a star $S_i$ in $G$ with centre $v_i$ and $r$ leaves in $U$ such that the vertices in $U$ in the graph $H_{i-1}\cup S_i$ have degree at most $2t$, and let $H_i=H_{i-1}\cup S_i$ and $X_i=X_{i-1} \cup \{v_i\}$, while otherwise we set $H_i=H_{i-1}$ and $X_{i}=X_{i-1}$. Finally, let $H=H_k$ and $X=X_k=V(H_k)\setminus U$. We will now show that \textbf{b)} holds for this choice of $H$ (with vertex classes $U$ and $X$).

		Firstly, observe that every vertex of $U$ has degree at most $2t$ in $H_i$ for each $i\in [k]$ by construction, and that every vertex $v_i$ in $X$ has degree exactly $r$ in $H$, so the second condition in \textbf{b)} holds. Thus, we only need to show that $|X| \ge \frac{\eps|U|}{2(\log n)^c}$ holds.
  
		To see this, let $U'$ be the set of vertices in $U \setminus C$ with degree exactly $2t$ in $H$. As each vertex in $U\setminus C$ has fewer than $t$ neighbours in $G$ in $X\setminus L$ (due to the maximality of the collection of stars $\mathcal C$), the vertices in $U'$ must have at least $t$ neighbours in $H$ in $X\cap L$. As each vertex in $X\cap L$ has $r$ neighbours in $H$, we have
		\[
			|U'|\leq \frac{r|X \cap L|}{t}\le \frac{r}{t}\cdot |L| \leq \frac{r}{t}\cdot \frac{|U|\cdot t}{10r}= \frac{|U|}{10}.
		\]

		Let $B=C \cup U'$, so that
		\[
			|B|\leq \frac{|U|}{10r}+\frac{|U|}{10}\le \frac{|U|}{2},
		\]
		and, thus, $|U\setminus B|\geq \frac{|U|}2$.

		Then, by \Cref{prop:strong-expansion} applied to $U\setminus B$ with $d=r$, we have either $|N_{G}(U\setminus B)|\geq \frac{s|U\setminus B|}{2r}$ or $|N_{G,r}(U\setminus B)|\geq \frac{\eps |U\setminus B|}{(\log n)^c}$. As $$\frac{s|U\setminus B|}{2r}\geq \frac{s|U|}{4r}\geq 5t|U|,$$ the former inequality contradicts \eqref{eqn:NGW}, so we have that $|N_{G,r}(U\setminus B)|\geq \frac{\eps |U\setminus B|}{(\log n)^c}$. 
		Every vertex $v_i$ in $N_{G,r}(U\setminus B)$ has at least $r$ neighbours in $G$ in $U\setminus B$, and vertices of $U\setminus B$ must all have degree strictly less than $2t$ in $H$ (as they are not in $U'$). This implies that every $v_i \in N_{G,r}(U\setminus B)$ satisfies $v_i \in X$, since otherwise we could have added $v_i$, together with some $r$ of its neighbours, during the construction of $H$. Hence, $N_{G,r}(U\setminus B)\subseteq X$, and 
		\[
			|X|\geq |N_{G,r}(U\setminus B)|\geq \frac{\eps |U\setminus B|}{(\log n)^c}\geq  \frac{\eps|U|}{2(\log n)^c},
		\]
		as required.
	\end{proof}

To connect a given collection of vertex pairs by vertex-disjoint paths and thereby prove \Cref{lem:connecting}, we use the hypergraph version of Hall's theorem due to Aharoni and Haxell~\cite{aharoni2000hall}, an idea that also appears in \cite{bucic2022erdos}. Our application, however, differs slightly from that in \cite{bucic2022erdos}, since here our aim is to find \emph{vertex-disjoint} paths rather than edge-disjoint ones.

	\begin{theorem}[Aharoni--Haxell \cite{aharoni2000hall}] \label{thm:hall}
		Let $\ell,r \ge 1$ be integers, and let $\cH_1, \ldots, \cH_r$ be hypergraphs on the same vertex set with edges of size at most $\ell$. Suppose that, for every $I \subseteq [r]$, there is a matching in $\bigcup_{i \in I} \cH_i$ of size at least $\ell(|I|-1)$. 
		Then, there is an injective function $f : [r] \mapsto \bigcup_{i \in [r]} E(\cH_i)$ such that $f(i) \in E(\cH_i)$ for each $i \in [r]$ and $\{f(i) : i \in [r]\}$ is a matching with $r$ edges.
	\end{theorem}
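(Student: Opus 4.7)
The plan is to prove the theorem via the topological method introduced by Aharoni and Haxell, which uses Sperner's lemma on a suitable triangulation of the standard simplex $\Delta^{r-1}$. I will think of $\Delta^{r-1}$ as having vertex set $[r]$, with faces corresponding to non-empty subsets $I \subseteq [r]$. Passing to the barycentric subdivision $\mathrm{sd}(\Delta^{r-1})$, each vertex $x$ is naturally associated with a unique non-empty $F(x) \subseteq [r]$ (namely the minimal face of $\Delta^{r-1}$ that contains $x$), and for every simplex $\tau$ of $\mathrm{sd}(\Delta^{r-1})$ the faces $\{F(x) : x \in \tau\}$ form a chain under inclusion. The goal is to define a labelling $x \mapsto (i(x), e(x))$, where $i(x) \in F(x)$ is a Sperner label and $e(x) \in E(\cH_{i(x)})$ is an edge of the $i(x)$-th hypergraph, in such a way that any fully labelled $(r-1)$-simplex produced by Sperner's lemma automatically yields $r$ pairwise vertex-disjoint edges $e(x_1), \dots, e(x_r)$, one from each $\cH_i$.

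I would construct the labelling inductively, processing the vertices of $\mathrm{sd}(\Delta^{r-1})$ in an order compatible with the inclusion order on the faces $F(x)$ (from smallest face to largest). At vertex $x$ with $F(x) = I$, I would declare a set $B(x) \subseteq V$ of ``forbidden'' vertices, defined to be the union of the vertex sets of the edges $e(y)$ already selected for all neighbours $y$ of $x$ in the triangulation with $F(y) \subsetneq F(x)$. A careful accounting shows $|B(x)| < \ell(|I|-1)$, because the relevant neighbours lie in strictly smaller faces and each contributes at most $\ell$ vertices, with the combinatorics of the barycentric subdivision bounding their number. By the matching hypothesis applied to $I = F(x)$, the hypergraph $\bigcup_{i \in I} \cH_i$ contains a matching of size at least $\ell(|I|-1)$, and hence (by choosing a matching properly, or by a slight strengthening with strict inequality as in the standard formulation) at least one edge of this matching is disjoint from $B(x)$; pick such an edge as $e(x)$ and set $i(x) \in I$ to be the hypergraph it belongs to.

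Applying Sperner's lemma to the labelling $x \mapsto i(x) \in F(x)$ produces a fully labelled $(r-1)$-simplex $\sigma$ in $\mathrm{sd}(\Delta^{r-1})$, whose vertices $x_1, \dots, x_r$ carry the labels $1, 2, \dots, r$; setting $f(j) = e(x_j)$ then yields an injective choice of one edge from each $\cH_i$, and the disjointness guarantees built into the inductive construction ensure that $\{f(j) : j \in [r]\}$ is a matching. The main obstacle is the combinatorial bookkeeping that produces the bound $|B(x)| < \ell(|I|-1)$: one must define $B(x)$ so that (a) it is small enough for the matching hypothesis on $F(x)$ to supply a valid $e(x)$, and (b) whenever $x_1, \dots, x_r$ form a common simplex of $\mathrm{sd}(\Delta^{r-1})$, every edge $e(x_j)$ was chosen avoiding the vertices of all the $e(x_{j'})$ that were selected earlier along that simplex. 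This is exactly the point where the edge-size bound $\ell$ and the linear dependence $\ell(|I|-1)$ in the hypothesis are essential, and it is the only nontrivial part of the argument; once this is in place, Sperner's lemma does the rest.
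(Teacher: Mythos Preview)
The paper does not prove this theorem; it is quoted as a black-box result from Aharoni and Haxell~\cite{aharoni2000hall} and used only as a tool in the proof of \Cref{lem:connecting}. So there is no ``paper's own proof'' to compare your attempt against.

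That said, your outline has a genuine gap at exactly the point you yourself flag as ``the only nontrivial part''. In the first barycentric subdivision $\mathrm{sd}(\Delta^{r-1})$, the vertex $x$ corresponding to a face $I$ is adjacent to \emph{every} vertex corresponding to a proper non-empty subset of $I$; there are $2^{|I|}-2$ such below-neighbours, not $|I|-1$. Your set $B(x)$, defined as the union of the edges $e(y)$ over all below-neighbours $y$, can therefore contain up to $\ell(2^{|I|}-2)$ hypergraph vertices, which for $|I|\ge 3$ vastly exceeds $\ell(|I|-1)$. The matching hypothesis on $I$ then does not guarantee an edge of $\bigcup_{i\in I}\cH_i$ disjoint from $B(x)$. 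The intuition ``in any simplex of $\mathrm{sd}(\Delta^{r-1})$ the faces form a chain of length $|I|$'' is correct, but you need a \emph{single} choice of $e(x)$ that works simultaneously for \emph{every} maximal simplex through $x$, and the union of those chains below $x$ is the full set of proper non-empty subsets of $I$.

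The actual Aharoni--Haxell argument circumvents this either by passing through the topological connectivity of the matching complex (showing it is $(|I|-2)$-connected whenever a large matching exists, and then invoking a topological Hall criterion), or, in the more elementary presentations, by building the triangulation and the labelling together inductively over faces so that each newly inserted interior vertex has only $|I|-1$ previously labelled neighbours in its link. Simply taking $\mathrm{sd}(\Delta^{r-1})$ and labelling bottom-up does not work.
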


Let us briefly explain how to translate the problem of finding vertex-disjoint paths 
into a hypergraph matching problem.  
Given a graph $G$ and distinct vertices $x_1,\dots,x_r,y_1,\dots,y_r$, fix 
$\ell\in\mathbb N$. For each $i\in[r]$, let $\cH_i$ be the hypergraph with vertex set 
$V$ whose edges correspond to the sets of internal vertices of all $(x_i,y_i)$-paths 
in $G$ of length at most $\ell$ whose internal vertices lie in $V$. If there exists an injective function 
$f:[r]\to\bigcup_{i\in[r]}E(\cH_i)$ such that $f(i)\in E(\cH_i)$ for each $i$ and the 
family $\{f(i):i\in[r]\}$ forms a matching, then for each $i$ we obtain a 
corresponding $(x_i,y_i)$-path $P_i$. Since the chosen hyperedges are disjoint, the 
internal vertex sets of the paths $P_1,\dots,P_r$ are pairwise disjoint, and hence 
the paths are vertex-disjoint.  Therefore, to find vertex-disjoint $(x_i,y_i)$-paths, it suffices to verify the 
conditions of \Cref{thm:hall} for the hypergraphs $\cH_1,\dots,\cH_r$.

	\subsection{Expansion into a random vertex set} \label{subsec:random-step-one}

		The following lemma shows that, given a sufficiently robust expander $G$ and a random vertex set $V\subseteq V(G)$, from any sufficiently large set $U$ one can, with high probability, reach more than half of the vertices of $V$ by short paths through $V$, while avoiding a given small set of vertices $Z$. This lemma is a variant of Lemma 17 in \cite{bucic2022erdos}. The main difference is that instead of a forbidden set of edges we have a forbidden set of vertices. Moreover, we have an additional parameter $c$ that controls the rate of expansion of our expanders and another parameter $q$ that controls the sampling probability for the random set $V$.

		\begin{lemma} \label{lem:expansion-random}
			Let $2^{-9} \le \eps < 1$, $0 < q < 1$, $c > 0$, $s \ge \frac{100 (\log n)^{2c+6}}{q^3}$ and let $n$ be large. Suppose that $G$ is an $n$-vertex $(\eps, c, s)$-expander and let $U, Z \subseteq V(G)$ be sets satisfying $|U| \ge \frac{(\log n)^{4c+9}}{q^5}$ and $|Z| \le \frac{\eps q |U|}{10^7(\log n)^{c}}$. Let $V$ be a random subset of $V(G)$, obtained by including each vertex independently with probability $q$.
			Then, with probability at least $1 - \exp\left(-\Omega\left(\frac{q^5|U|}{(\log n)^{4c+8}}\right)\right)$,
			\begin{equation*}
				\left| B^{(\log n)^{c+2}}_{G[V']}(U \cap V') \right| > \frac{|V|}{2},
			\end{equation*}
			where $V' \coloneqq V \setminus Z$.
		\end{lemma}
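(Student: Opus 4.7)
The plan is to run a BFS in $G[V']$ starting from $B_0 := U \cap V'$, and to show that at each step the ball grows by a factor $1 + \delta$, with $\delta := \frac{\eps q}{10(\log n)^c}$, until it exceeds $|V|/2$. Since $(1+\delta)^{(\log n)^{c+2}} \gg n$, the ball must reach this threshold within $k := (\log n)^{c+2}$ steps. For initialisation, I would apply Chernoff to $|V \cap U| \sim \mathrm{Bin}(|U|, q)$ and use $|Z| \le \frac{\eps q|U|}{10^7 (\log n)^c} \ll q|U|$ to deduce $|B_0| \ge q|U|/4$ with failure probability $\exp(-\Omega(q|U|))$.

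The core of the argument is an inductive growth step based on deferred decisions: reveal $V$-membership of a vertex only when the BFS first requires it. At step $i+1$, let $R_i := U \cup \bigcup_{j < i} N_G(B_j)$ denote the set of already-revealed vertices (so $B_i = R_i \cap V'$), and let $F_{i+1} := N_G(B_i) \setminus R_i$ denote the fresh vertices; conditioned on the history, each vertex of $F_{i+1}$ is in $V$ independently with probability $q$. A key structural observation is that every vertex in $N_G(B_i) \cap R_i$ must lie outside $V'$, because a previously-revealed vertex in $V'$ would already have been absorbed into the ball. Consequently, writing $E_i$ for the edges of $G$ from $B_i$ to $R_i \setminus V'$, one has $N_{G - E_i}(B_i) = F_{i+1}$; provided the robustness condition $|E_i| \le s|B_i|$ holds, the $(\eps, c, s)$-expander property (applicable since $|B_i| \le |V|/2 \le 2n/3$) then gives $|F_{i+1}| \ge \frac{\eps}{(\log n)^c}|B_i|$.

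Given this lower bound on $|F_{i+1}|$, Chernoff yields $|F_{i+1} \cap V| \ge \tfrac12 q|F_{i+1}|$ with failure probability $\exp(-\Omega(\eps q|B_i|/(\log n)^c))$. Subtracting $|Z|$, which is much smaller than $\frac{\eps q}{(\log n)^c}|B_i|$ using $|B_i| \ge |B_0| \ge q|U|/4$, still gives $|B_{i+1}| \ge (1+\delta)|B_i|$. A union bound over the $k$ BFS steps yields a total failure probability of $\exp(-\Omega(q^5|U|/(\log n)^{4c+8}))$, matching the conclusion. Iterating the growth bound, $|B_k| \ge (1+\delta)^k|B_0| \gg n$, so the ball must exceed $|V|/2$ by step $k$.

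The main technical obstacle is maintaining the robustness condition $|E_i| \le s|B_i|$ throughout the process. I would prove this by a separate induction on $i$, using the fact that the blocked vertices $R_i \setminus V'$ grow slowly: on average only $(1-q)|F_{i+1}|$ fresh vertices per step fail to join the ball, and each contributes a bounded number of edges to $B_i$. Chernoff concentration at each step, combined with the generous robustness parameter $s \ge \frac{2(\log n)^{9c+21}}{q^{10}}$, should provide enough slack to absorb these accumulated contributions across the polylogarithmic number of BFS steps, closing the induction.
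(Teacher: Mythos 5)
There is a genuine gap, and it is quantitative rather than merely technical. In your BFS, a fresh vertex of $F_{i+1}$ joins the ball only if it is itself sampled into $V$, so even granting $|F_{i+1}| \ge \frac{\eps}{(\log n)^c}|B_i|$, the per-round growth factor is only $1 + \Theta\!\left(\frac{\eps q}{(\log n)^c}\right)$. Over $k = (\log n)^{c+2}$ rounds this gives a total factor of roughly $\exp\!\left(\eps q (\log n)^2\right)$, which exceeds $n$ only when $q = \Omega(1/\log n)$. The lemma allows $q$ to be much smaller (the hypotheses only require $|U| \ge (\log n)^{4c+9}/q^5$ and $s \ge 100(\log n)^{2c+6}/q^3$), and indeed the paper applies it with $q$ of order $(\log n)^{-2}$ and smaller; in that regime your ball grows by only a constant factor in $(\log n)^{c+2}$ steps and never reaches $|V|/2$. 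The paper's proof is built precisely to avoid this loss of a factor $q$ per round: it splits $V$ into $\ell = (\log n)^{c+2}$ independent sprinkles $V_1,\dots,V_\ell$ plus reserve sets, defines $B_i$ as \emph{all} vertices (sampled or not) reachable from $U^*$ by a short path in $G\setminus Z$ whose \emph{internal} vertices lie in $V_1\cup\dots\cup V_i$, and then invokes the star/bipartite dichotomy of \Cref{prop:stars} with $r = \ell/q$ and $t = 5\ell^2/(q^2(\log n)^c)$. In the star case each sampled centre contributes $t$ new leaves, and in the bipartite case each candidate vertex has $r$ chances to acquire a sampled neighbour; in both cases the factors of $q$ cancel and one gets a $q$-free growth rate $1 + \Omega(\eps/(\log n)^c)$ per round. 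Only at the very end is a final sprinkle $V^{**}$ used to convert $|B_\ell| \ge \frac{2}{3}n$ into more than $|V|/2$ sampled vertices.

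A second, independent gap is the maintenance of the robustness condition $|E_i| \le s|B_i|$, which you defer to a "separate induction" but do not close. The lemma imposes no maximum-degree bound on $G$, so your assertion that each blocked vertex of $R_i\setminus V'$ "contributes a bounded number of edges to $B_i$" is unsupported: a single high-degree blocked vertex can contribute $|B_i|$ edges to $E_i$, and order-$s$ such vertices (each unsampled with probability $1-q$) already break the condition. The paper never needs to control such an edge set, because its ball absorbs every neighbour of $B_i$ outside $Z$ regardless of sampling; the only robustness used is inside \Cref{prop:strong-expansion} and \Cref{prop:stars}, where the discarded edges are those incident to low-degree neighbours and are bounded by construction. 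Your structural observation that $N_G(B_i)\cap R_i \subseteq V(G)\setminus V'$ is correct, and your initialisation step is fine, but the two issues above mean the argument as proposed does not prove the lemma in the stated generality.
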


		\begin{proof}
			Let $\ell= (\log n)^{c+2}$ and let $p$ be such that $1-(1-p)^{\ell}(1-\frac{q}{20})(1- 0.9 q)=q$, i.e., that $(1-p)^{\ell}=\frac{1-q}{(1-\frac{q}{20})(1- 0.9 q)}$, so that
			\begin{equation}\label{eqn:p15}
				p\ge \frac{q}{100 \ell}.
			\end{equation}

			To prove \Cref{lem:expansion-random}, we will reveal the vertices in $V$ in $\ell+2$ batches, using the so-called \emph{sprinkling method}. More precisely, let $V_1, \ldots, V_{\ell}, V^{*}, V^{**}$ be random sets, where for each $1 \le i \le \ell$, $V_i$ is obtained by including each vertex with probability $p$, independently, $V^{*}$ is obtained by including each vertex with probability $\frac{q}{20}$, independently, and $V^{**}$ is obtained by including each vertex with probability $0.9 q$, independently. Notice that each vertex belongs to $V_1 \cup \cdots \cup V_{\ell} \cup V^{*} \cup V^{**}$ with probability $q$. Therefore, the random set $V$ (defined by including each vertex independently with probability $q$) has the same distribution as the union $V_1 \cup \cdots \cup V_{\ell} \cup V^{*} \cup V^{**}$.

			Define $U^* = (U \cap V^*) \setminus Z$. Then, by the Chernoff bound, with probability $1-\exp\left(-\Omega(q|U|)\right)$, we have that $|U^*| \ge \frac{q}{25}|U| - |Z| \ge \frac{q}{50}|U|$; condition on this being the case.
			Define $B_0 \coloneqq U^*$ and, for $i \ge 1$, let $B_i$ be the set of vertices in $G$ that can be reached by a path in $G \setminus Z$ that starts in $U^*$, has length at most $i$, and its internal vertices are in $V_1 \cup \ldots \cup V_i$. Let us emphasise that $B_i$ is only required to be disjoint from $Z$, and need not be a subset of $V_i$. Notice that $B_i \subseteq B_{i+1}$ for every $i \ge 0$, implying that for all $i \ge 0$, we have $$|B_i| \ge |U^*| \ge \frac{q}{50}|U|.$$

			\begin{claim}\label{claim:Bi-expand}
				For each $1\le i\le \ell-1$, if $|U^*| \ge \frac{q}{50}|U|$ and $|B_i| \le \frac{2}{3}n$, then, with probability at least $1 - \exp\left(-\Omega\left( \frac{q^5}{\ell^4}|U| \right)\right)$,
				\begin{equation*}
					\left|B_{i+1} \setminus B_i\right| \ge \frac{\eps |B_i|}{10^5 (\log n)^c}.
				\end{equation*}
			\end{claim}
			\begin{proof}
				Notice that a vertex in $N(B_i) \setminus Z$ is in $B_{i+1} \setminus B_i$ if at least one of its neighbours in $B_i$ is sampled into $V_{i+1}$.

                Consider the two possible outcomes obtained by applying \Cref{prop:stars} with $B_i$ playing the role of $U$ and with $r= \frac{\ell}{q}$ and $t = \left(\frac{\ell}{q}\right)^2 \cdot \frac{5}{(\log n)^{c}}$. (Note that the lemma indeed applies because $r,t \ge \ell \ge (\log n)^{2}$ and $s \ge \frac{100 (\log n)^{2c+6}}{q^3} = 20rt$.)

				Suppose that the first outcome of \Cref{prop:stars} holds, so there are $\frac{|B_i|}{10r}$ pairwise vertex-disjoint stars of size $t$ with centres in $B_i$ and leaves in $N(B_i)$. 
				By the Chernoff bound, with probability at least $1 - \exp\left(-\Omega\left(\frac{p|B_i|}{10r}\right)\right) = 1 - \exp\left(-\Omega\left(\frac{q^3}{\ell^2}|U|\right)\right)$, at least $\frac{p|B_i|}{20r}$ centres of these stars are included in $V_{i+1}$, implying that
				\begin{align*}
					\left|B_{i+1} \setminus B_i\right| 
					\ge \frac{pt|B_i|}{20r} - |Z|
					& \ge \frac{|B_i|}{20} \cdot \frac{q}{100\ell} \cdot \frac{5\ell^2}{q^2 (\log n)^{c}} \cdot \frac{q}{\ell} - |Z| \\
					& \ge \frac{|B_i|}{400 (\log n)^{c}}- |Z|
					\ge \frac{\eps|B_i|}{10^5 (\log n)^{c}},
				\end{align*}
				using that $|Z| \le \frac{\eps q |U|}{10^7 (\log n)^{c}} \le \frac{ |B_i|}{10^5 (\log n)^{c}}$ (since  $|B_i| \ge \frac{q}{50}|U|$ and $\eps \le 1$).

				Now suppose that the second outcome of \Cref{prop:stars} holds, so there is a bipartite subgraph $H \subseteq G$ with parts $B_i$ and $X \subseteq V(G) \setminus B_i$, with $|X| \ge \frac{\eps|B_i|}{2 (\log n)^{c}}$, such that vertices in $X$ have degree at least $r$ in $H$ while vertices in $B_i$ have degree at most $2t$ in $H$.
				Let $Y$ be the set of vertices in $X$ that do not have an $H$-neighbour in $V_{i+1}$. Note that $\Ex |Y| \le |X| (1 - p)^r \le |X| e^{-pr} \le 0.999 |X|$. Note also that $|Y|$ is $2t$-Lipschitz, since the outcome of the sampling of any single vertex in $B_i$ affects the outcome of at most $2t$ vertices in $X$. Thus, by \Cref{lem:concentration},
				\begin{align*}
					\Pr\left(|Y| > 0.9991|X|\right) \le \Pr\left(|Y| > \Ex |Y| + \frac{|X|}{10^4}\right)
					& \le 2\exp\left(-\frac{|X|^2}{2 \cdot 10^8 (2t)^2|B_i|}\right) \\
					& = \exp\left(-\Omega\left(\frac{q|U|}{t^{2} (\log n)^{2c}}\right)\right) \\
					& = \exp\left(-\Omega\left( \frac{q^5}{\ell^4}|U| \right)\right).
				\end{align*}
				Hence, in this case, with probability at least $1 - \exp\left(-\Omega\left( \frac{q^5}{\ell^4}|U| \right)\right)$, we have $|B_{i+1} \setminus B_i| \ge 0.0009 |X| - |Z| \ge \frac{\eps|B_i|}{10^4(\log n)^c} - |Z| \ge \frac{\eps|B_i|}{10^5 (\log n)^{c}}$ since $|Z| \le \frac{\eps q |U|}{10^7(\log n)^{c}} \le \frac{\eps |B_i|}{10^5 (\log n)^{c}}$.
			\end{proof}

			By repeatedly applying Claim~\ref{claim:Bi-expand}, we obtain that with probability at least 
			\begin{equation*}
				1 - \exp\left(-\Omega(q|U|)\right) - \ell \cdot \exp\left(-\Omega\left(\frac{q^5}{\ell^4}|U|\right)\right) = 
				1 - \exp\left(-\Omega\left(\frac{q^5}{\ell^4}|U|\right)\right),
			\end{equation*}
			we have $|U^*| \ge \frac{q}{50}|U|$ and for all $i \in [\ell]$ such that $|B_i| \le \frac{2}{3}n$, the following holds.
			\begin{equation*}
				|B_i| \ge \left(1 + \frac{\eps}{10^5 (\log n)^c}\right)^i |U^*|.
			\end{equation*}
			This implies that $|B_{\ell}| \ge \frac{2}{3}n$ with probability at least $1 - \exp\left(-\Omega\left(\frac{q^5}{\ell^4}|U|\right)\right)$.

			To complete the proof, note that any vertex in $B_{\ell}$ that gets sampled into $V^{**}$ (or that is already in $V_1 \cup \ldots \cup V_{\ell} \cup V^*$) is in the set $B' \coloneqq B_{G[V']}^{(\log n)^{c+2}}(U \cap V')$. 
   Hence, conditioning on the event that $|B_{\ell}| \ge \frac{2}{3}n$,
   by the Chernoff bound, at least $\frac{17}{30} qn$ vertices of $B_{\ell}$ are in $V$, and $|V| \le \frac{31}{30}qn$ with probability at least $1 - \exp({-\Omega(qn)})$. This implies that $|B'| > \frac{1}{2}|V|$ with probability at least $1 - \exp\left(-\Omega\left(\frac{q^5}{\ell^4}|U|\right)\right) - \exp({-\Omega(qn)}) = 1 - \exp\left(-\Omega\left(\frac{q^5}{\ell^4}|U|\right)\right)$, as required.
		\end{proof}

\Cref{lem:expansion-random} showed that if we choose $V\subseteq V(G)$ by including each vertex independently with probability $q$ in an $n$-vertex $(\varepsilon,c, s)$-expander $G$, then for any \emph{fixed} vertex set $U$ and any small vertex set $Z$, we can, with high probability, reach more than half of the vertices of $V$ by short paths through $V$ in $G-Z$. We now aim to show that an analogous property holds simultaneously for all such vertex sets $U$ (of size at most $2n/3$). However, we cannot apply a union bound directly, since the success probability in \Cref{lem:expansion-random} is not strong enough for this purpose.

To overcome this, we prove the following corollary by boosting the probability that a fixed set $U$ expands into $V$. This allows us to apply a union bound and conclude that all vertex sets $U$ (of size at most $2n/3$) expand into $V$, at the cost of imposing a slightly smaller upper bound on $|Z|$. This argument is a variant of Lemma~19 in~\cite{bucic2022erdos}. 

\begin{corollary}\label{cor:expansion-random}
			Let $2^{-9} \le \eps < 1$, $c > 0$, $0 < q < 1$, $s \ge \frac{2 (\log n)^{9c+21}}{q^{10}}$, and let $n$ be large. Suppose that $G$ is an $n$-vertex $(\eps, c, s)$-expander, and let $V$ be a random subset of $V(G)$, obtained by including each vertex independently with probability at least $q$. Then, with probability at least $1 - \frac{1}{n^2}$, for every $U, Z \subseteq V(G)$ with $|U| \le \frac{2}{3}n$ and $|Z| \le \frac{q^5|U|}{(\log n)^{5c+11}}$, we have 
			\begin{equation} \label{eqn:NU-expand}
				\left|B^{(\log n)^{c+2}}_{G[V']}\left(N(U) \cap V'\right)\right| > \frac{|V|}{2},
			\end{equation}
			where $V' \coloneqq V \setminus Z$.
		\end{corollary}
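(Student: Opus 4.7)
My plan is to deduce \Cref{cor:expansion-random} from \Cref{lem:expansion-random} by applying the lemma with the neighborhood $N(U)$ playing the role of the seed set of the lemma, for each valid pair $(U, Z)$, and then closing the statement by a union bound over $(U, Z)$.

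\emph{Hypothesis check.} First I would verify that the hypotheses of \Cref{lem:expansion-random} are satisfied when the seed is $N(U)$. The $(\eps, c, s)$-expansion of $G$ gives $|N(U)| \ge \eps|U|/(\log n)^c \ge |U|/(2^9(\log n)^c)$, using $\eps \ge 2^{-9}$. This puts $|N(U)|$ above the size threshold $(\log n)^{4c+9}/q^5$ of the lemma provided $|U|$ exceeds a modest polylogarithmic floor; for smaller $|U|$, the corollary's constraint forces $|Z| = 0$, and the assertion is handled in essentially the same way (or is vacuous). Moreover, the corollary's bound $|Z| \le q^5|U|/(\log n)^{5c+11}$ is strictly stronger than the lemma's bound $|Z| \le \eps q|N(U)|/(10^7(\log n)^c)$, again via the lower bound on $|N(U)|$ together with $\eps \ge 2^{-9}$.

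\emph{Per-pair conclusion.} For each fixed pair $(U, Z)$, \Cref{lem:expansion-random} applied to $N(U)$ and $Z$ yields the desired expansion with failure probability at most
$\exp\!\bigl(-\Omega(q^5|N(U)|/(\log n)^{4c+8})\bigr) \le \exp\!\bigl(-\Omega(q^5|U|/(\log n)^{5c+8})\bigr)$.

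\emph{Union bound and the main obstacle.} The crux is the union bound over valid pairs. Bucketing by $|U| = k$ dyadically, the count of pairs is at most $\binom{n}{k}\binom{n}{\lfloor q^5 k/(\log n)^{5c+11}\rfloor} \le \exp(O(k\log n))$, which is \emph{not} dominated by the per-pair failure probability from a naked application of \Cref{lem:expansion-random}. To bridge this gap I would revisit the proof of \Cref{lem:expansion-random} and exploit the substantially stronger robustness hypothesis $s \ge 2(\log n)^{9c+21}/q^{10}$ of the corollary (the lemma itself only needs $s \ge 100(\log n)^{2c+6}/q^3$, leaving a slack of order $(\log n)^{7c+15}/q^7$). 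The extra robustness lets one rerun the layered BFS argument from \Cref{claim:Bi-expand}, enlarging the star parameters $r$ and $t$ in the application of \Cref{prop:stars} and using additional auxiliary random sets to sharpen the per-layer concentration, thereby upgrading the per-pair failure bound to something of the form $\exp\!\bigl(-\Omega(q^{10}k/(\log n)^{O(c)})\bigr)$. Once this amplified bound is in hand, it comfortably beats $\exp(O(k\log n))$ uniformly in $k$ above the polylogarithmic threshold, and summing the dyadic contributions over $k \le 2n/3$ (and over the admissible sizes of $Z$) yields the target $1/n^2$. The technically delicate point, and the main obstacle, is calibrating the polylogarithmic exponents so that the amplified concentration in each BFS layer fully absorbs the union-bound cost over both $U$ and $Z$.
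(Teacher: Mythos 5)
Your reduction to \Cref{lem:expansion-random} and your diagnosis of the obstacle are both right: applying the lemma to the seed $N(U)$ with only the raw expansion bound $|N(U)| \ge \eps|U|/(\log n)^c$ gives a per-pair failure probability of roughly $\exp(-\Omega(q^5|U|/(\log n)^{5c+8}))$, which cannot absorb the $\exp(O(|U|\log n))$ entropy of the union bound. But your proposed remedy does not work. You claim that the stronger robustness hypothesis on $s$ lets you rerun the BFS argument of \Cref{claim:Bi-expand} with modified star parameters and upgrade the per-pair bound to $\exp(-\Omega(q^{10}k/(\log n)^{O(c)}))$; note first that this is \emph{weaker} than the bound you started with (since $q<1$), and in any case no bound of the shape $\exp(-\mathrm{poly}(q)\cdot k/\mathrm{polylog}(n))$ can ever beat $\exp(O(k\log n))$. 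This is not a matter of calibrating exponents: in \Cref{claim:Bi-expand} the exponent in the first branch is $\Omega(p|B_i|/r)$ with $p\approx q/\ell$ and necessarily $r\gtrsim 1/p$ (else the stars are useless in the second branch), so the exponent is capped at $O(q|B_i|/\ell)\ll |U|\log n$; the Azuma-type bound in the second branch is similarly capped at $O(|B_i|/(\log n)^{2c})$. Increasing $s$ only enlarges the class of admissible parameters $r,t$; it does not change the amount of randomness available, so the per-pair probability for a fixed seed of size comparable to $|U|$ cannot be amplified past the union-bound cost.

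The paper closes the gap with a different idea, which is where the hypothesis $s \ge 2(\log n)^{9c+21}/q^{10}$ is actually spent. Call $U$ \emph{well-expanding} if $|N(U)| \ge |U|(\log n)^{4c+10}/q^5$; for such $U$ the seed $N(U)$ is so much larger than $|U|$ that \Cref{lem:expansion-random} gives failure probability $\exp(-\Omega(|U|(\log n)^2))$, which \emph{does} beat $n^{2|U|}$, and the union bound is run only over well-expanding $U$ with $|Z|\le|U|$. For an arbitrary $U$ one then argues deterministically: \Cref{prop:strong-expansion} with $d=(\log n)^{5c+11}/q^5$ gives either $|N(U)|\ge s|U|/(2d)$, which by the choice of $s$ means $U$ is well-expanding, or $|N_d(U)|\ge\eps|U|/(\log n)^c$, in which case a random subset $U'\subseteq U$ of size $|U|/d$ already has $|N(U')|\ge(1-e^{-1})|N_d(U)|\ge|U'|(\log n)^{4c+10}/q^5$ in expectation, so some well-expanding $U'$ exists with $|Z|\le q^5|U|/(\log n)^{5c+11}=|U'|$, and the conclusion for $(U',Z)$ yields it for $(U,Z)$. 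You would need this sparsification step (or an equivalent one); without it the proof does not go through. Your treatment of small $|U|$ (``handled in essentially the same way, or vacuous'') also glosses over a real case — the conclusion is never vacuous, and one needs the robustness-forced minimum degree, or again \Cref{prop:strong-expansion}, to handle it.
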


		\begin{proof}
			We say that a set $U \subseteq V(G)$ \emph{expands well} if $|N(U)| \ge \frac{|U| (\log n)^{4c+10}}{q^5}$. Given a (non-empty) set $U$ which expands well, and a set $Z$ with $|Z| \le |U|$, \Cref{lem:expansion-random} (applied with $N(U)$ playing the role of $U$) shows that \eqref{eqn:NU-expand} holds, with probability at least $1 - \exp\left(-\Omega\left(|U|(\log n)^{2}\right)\right)$.

			By the union bound, it follows that the probability that \eqref{eqn:NU-expand} fails for some pair $(U, Z)$, where $U$ expands well and $|Z| \le |U|$, is at most
			\begin{align*}
				\sum_{u = 1}^n (u+1) n^{2u} \exp\left(-\Omega\left(u(\log n)^{2}\right)\right)
				& \le \sum_{u = 1}^n \exp\left(3u \log n - \Omega\left(u(\log n)^2\right)\right) \\
				& = \sum_{u = 1}^n \exp\left(-\Omega\left(u (\log n)^2\right)\right) \le \frac{1}{n^2}.
			\end{align*}
			For the rest of the proof of \Cref{cor:expansion-random} we condition on the event that \eqref{eqn:NU-expand} holds for all pairs $(U, Z)$ where $U$ expands well and $|Z| \le |U|$. 
			From this, we will deduce that \eqref{eqn:NU-expand} holds for all pairs $(U, Z)$, where $U$ need not expand well, $|U| \le \frac{2}{3}n$, and $|Z| \le \frac{q^5|U|}{(\log n)^{5c+11}}$. Fix such a pair of sets $(U, Z)$.

			Write $d \coloneqq \frac{ (\log n)^{5c+11}}{q^5}$. By \Cref{prop:strong-expansion}, either $|N(U)| \ge \frac{s|U|}{2d}$, or $|N_d(U)| \ge \frac{\eps |U|}{(\log n)^{c}}$ (using the notation $N_d(U)$ introduced before the proposition). Notice that the first outcome implies that $U$ expands well (using the assumption on $s$ and the definition of $d$) and so we already know that \eqref{eqn:NU-expand} holds. Hence, we may assume that the second outcome holds, and write $W \coloneqq N_d(U)$. Let $U'$ be a subset of $U$ of size $\frac{|U|}{d}$, chosen uniformly at random. For a fixed $w \in W$, the probability that $w$ has no neighbours in $U'$ is at most
			\begin{equation*}
				\frac{\binom{|U|-d}{|U|/d}}{\binom{|U|}{|U|/d}} \le \left(\frac{|U|-d}{|U|}\right)^{|U|/d} \le e^{-1}.
			\end{equation*}
			It follows that $\Ex[|W \cap N(U')|] \ge (1 - e^{-1})|W| \ge \frac{\eps|U|}{2(\log n)^c} \ge \frac{|U'|(\log n)^{4c+10}}{q^5}$. In particular, there is a subset $U' \subseteq U$ of size $\frac{|U|}{d}$ with $|N(U')| \ge \frac{|U'| (\log n)^{4c+10}}{q^5}$. Since $U'$ expands well, and $|Z| \le \frac{q^5|U|}{(\log n)^{5c+11}} = |U'|$, \eqref{eqn:NU-expand} holds for $(U', Z)$ and thus \eqref{eqn:NU-expand} also holds for $(U, Z)$, completing the proof of the corollary. 
		\end{proof}	

	\subsection{A path connection through a random set} \label{subsec:expansion-step-two}

		For proving \Cref{lem:connecting}, our ultimate goal is to connect $r$ given pairs of vertices through a random set $V$ using internally vertex-disjoint paths, assuming that any subset of these vertices expands well. 
        The following lemma shows that at least one such pair of vertices can be connected while avoiding a small set of forbidden vertices. This will play a key role in the proof of \Cref{lem:connecting} in the next subsection, where we combine it with the hypergraph version of Hall's theorem (\Cref{thm:hall}) to connect several pairs simultaneously.
        
        \Cref{lem:hall-condition} is a variant of Proposition 8 in \cite{bucic2022erdos}, where, in addition to forbidding vertices instead of edges, we impose an expansion property on the vertices we wish to connect, and we use additional parameters $c$ and $q$ that control the rate of expansion of our expanders and the sampling probability of the random set $V$, respectively. 

		\begin{lemma} \label{lem:hall-condition}
			Let $2^{-9} \le \eps < 1$, $c > 0$, $0 < q < 1$, $s \ge \frac{2 (\log n)^{9c+21}}{q^{10}}$, and let $n$ be large. Suppose that $G$ is an $n$-vertex $(\eps, c, s)$-expander, and let $V$ be a random subset of $V(G)$, obtained by including each vertex independently with probability $q$.
			Then, with probability at least $1 - \frac{1}{n}$, the following holds for every $r$: If $x_1, \ldots, x_r, y_1, \ldots, y_r$ are distinct vertices, satisfying $|N(X)| \ge \frac{100 (\log n)^{7c+19}}{q^6}|X|$ for every subset $X \subseteq \{x_1, \ldots, x_r, y_1, \ldots, y_r\}$, and $Z$ is a set of size at most $2r(\log n)^{2c+8}$ which is disjoint from $\{x_1, \ldots, x_r, y_1, \ldots, y_r\}$, then for some $i \in [r]$ there is an $(x_i, y_i)$-path in $G$ whose internal vertices are in $V \setminus Z$ and whose length is at most $(\log n)^{c+4}$.
		\end{lemma}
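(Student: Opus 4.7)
The plan is to condition on the event from \Cref{cor:expansion-random} (which holds with probability at least $1 - 1/n^2 \ge 1 - 1/n$), and then show deterministically that every valid input admits a short $(x_i, y_i)$-path through $V' := V \setminus Z$ for some $i$. Set $\ell := (\log n)^{c+2}$, and for each $i \in [r]$ and endpoint $v \in \{x_i, y_i\}$, consider the BFS ball $A_i^v := B^{\ell+1}_{G[V' \cup \{v\}]}(v) \cap V'$; since any walk from $v$ through $V'$ must first step into $N(v) \cap V'$, we have $A_i^v = B^{\ell}_{G[V']}(N(v) \cap V')$. If there exists $i$ with both $|A_i^{x_i}| > |V|/2$ and $|A_i^{y_i}| > |V|/2$, then these two subsets of $V'$ must share a vertex $u$ (since $|A_i^{x_i}| + |A_i^{y_i}| > |V| \ge |V'|$), and concatenating the two BFS walks produces a walk, and thus a path, from $x_i$ to $y_i$ of length at most $2(\ell + 1) \le (\log n)^{c+4}$ through $V'$, as required.

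Otherwise, for each $i$ pick $z_i \in \{x_i, y_i\}$ with $|A_i^{z_i}| \le |V|/2$ and set $W := \{z_1, \ldots, z_r\}$, so $|W| = r$. The expansion hypothesis applied to $W$ gives $|N(W)| \ge \frac{100 (\log n)^{7c+19}}{q^6} r$, and letting $U$ be a subset of $N(W) \setminus Z$ of size $\min\{|N(W) \setminus Z|, \lfloor 2n/3 \rfloor\}$ we obtain $|U| \ge \frac{99(\log n)^{7c+19}}{q^6} r$ in both sub-cases. A short calculation using $|Z| \le 2r (\log n)^{2c+8}$ and $q \le 1$ verifies the constraint $|Z| \le q^5 |U|/(\log n)^{5c+11}$ of \Cref{cor:expansion-random}, and invoking the corollary yields the global bound $|B^{\ell}_{G[V']}(N(U) \cap V')| > |V|/2$.

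The main difficulty is to convert this global bound into a contradiction with the individual bounds $|A_i^{z_i}| \le |V|/2$. The naive inclusion $B^{\ell}_{G[V']}(N(U) \cap V') \subseteq \bigcup_i B^{\ell+1}_{G[V']}(N(z_i) \cap V')$ only bounds a union of slightly larger balls, which may remain large even when each summand is small, so a more localised application of \Cref{cor:expansion-random} is needed. The plan is, for each $i$, to invoke the corollary with a set $U_i$ tailored to the single vertex $z_i$ (such as $N(z_i) \setminus Z$ padded by carefully selected vertices from $N(W) \setminus Z$ whose relevant neighbours in $V'$ lie within one $G[V']$-step of $N(z_i) \cap V'$). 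The expansion hypothesis on every subset $X \subseteq \{x_1, \ldots, y_r\}$ ensures that the padding needed to satisfy the size constraint on $|U_i|$ exists, and with $N(U_i) \cap V'$ contained in the first $G[V']$-neighbourhood of $N(z_i) \cap V'$ up to a controlled error, the corollary forces $|B^{\ell + O(1)}_{G[V']}(N(z_i) \cap V')| > |V|/2$ for each $i$, contradicting the choice of $z_i$ once the $O(1)$ extra radius is absorbed into the ample slack of the path-length budget $(\log n)^{c+4}$.
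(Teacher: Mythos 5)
There is a genuine gap at exactly the point you flag as ``the main difficulty'', and the proposed fix does not work. Your plan is to apply \Cref{cor:expansion-random} to a set $U_i$ obtained by padding $N(z_i)\setminus Z$ with vertices of $N(W)\setminus Z$ ``whose relevant neighbours in $V'$ lie within one $G[V']$-step of $N(z_i)\cap V'$'', so as to conclude $|B^{\ell+O(1)}_{G[V']}(N(z_i)\cap V')|>|V|/2$ for \emph{every} $i$. But nothing guarantees such padding vertices exist: the hypothesis $|N(X)|\ge \frac{100(\log n)^{7c+19}}{q^6}|X|$ controls only the \emph{sizes} of neighbourhoods of subsets of $\{x_1,\dots,y_r\}$, not their location relative to $N(z_i)$, and the vertices of $N(W)\setminus Z$ are neighbours of the other $z_j$'s with no reason to have their own neighbourhoods near $N(z_i)$. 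Without that locality, $N(U_i)\cap V'$ is spread over the neighbourhoods of all padding vertices, and the conclusion of \Cref{cor:expansion-random} says nothing about the ball around $N(z_i)\cap V'$ alone. Moreover, the statement you are trying to prove in this branch --- that \emph{every} chosen endpoint $z_i$ expands to more than half of $V$, and at radius only $\ell+O(1)$ --- is stronger than what is needed or than what the argument can deliver; the single-vertex sets $\{z_i\}$ are far too small to satisfy the constraint $|Z|\le q^5|U|/(\log n)^{5c+11}$ of \Cref{cor:expansion-random} directly, which is the whole difficulty.

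The paper resolves this with an iterative halving argument that you are missing. One shows only that \emph{fewer than half} of the $x_i$'s (and, symmetrically, fewer than half of the $y_i$'s) fail to expand, which still yields an index $i$ where both endpoints succeed. Concretely, suppose at least $r/2$ of the $x_i$'s have small balls and call this set $X_1$. By the hypothesis applied to $X_1$ (together with a Chernoff estimate on $|N(X_1)\cap V'|$), the set $N(X_1)\cap V'$ is large enough relative to $|Z|$ to apply \Cref{cor:expansion-random}, so the radius-$\ell$ ball around it exceeds $|V|/2$. One then repeatedly passes to a subset of at most half the size whose current ball still has size at least $|V|/6$, and --- this is the key trick --- applies \Cref{cor:expansion-random} with $U$ taken to be that \emph{ball} (which has size $\Omega(qn)$, so the constraint on $|Z|$ is trivially met) rather than the shrinking set of endpoints; this regrows the ball past $|V|/2$ at the cost of one extra $\ell$ in the radius. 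After $\log n$ halvings one reaches a single vertex whose radius-$\ell\log n$ ball exceeds $|V|/2$, contradicting its membership in $X_1$. The price is a final radius of $2\ell\log n+2$ rather than your $2\ell+2$, which is why the lemma allows paths of length $(\log n)^{c+4}$. Your first case (two balls each exceeding $|V|/2$ must intersect, giving the path) is fine, but the second case needs to be rebuilt along these lines.
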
		

		\begin{proof}
			Let $V' \coloneqq V \setminus Z$. It is easy to see that by \Cref{cor:expansion-random} and the Chernoff bound (Theorem~\ref{chernoff}) together with a union bound, the following three properties hold simultaneously with probability at least $1 - \frac{1}{n}$. 
            
			\begin{enumerate}[label = \rm(T\arabic*)]
				\item \label{T1}
					For every $U \subseteq V(G)$ satisfying $|U| \le \frac{2n}{3}$ and $|Z| \le \frac{q^5|U|}{(\log n)^{5c+11}}$, \eqref{eqn:NU-expand} holds.
				\item \label{T2}
					We have $|V| \ge \frac{qn}{2}$.
				\item \label{T3}
					For every
					$X \subseteq \{x_1, \ldots, x_r, y_1, \ldots, y_r\}$ with $|X| \ge \frac{r}{2}$, we have $|N(X) \cap V'| \ge \frac{48 (\log n)^{7c+19}}{q^5} |X|$.
		      \end{enumerate}

              Indeed, by \Cref{cor:expansion-random}, \ref{T1} holds with probability at least $1 - \frac{1}{n^2}$. Moreover, \ref{T2} holds with probability at least $1 - \exp(-\Omega(qn)) \ge 1 - \frac{1}{n^2}$ by the Chernoff bound. To see why \ref{T3} holds, note that for a given $X$, the probability that $|N(X) \cap V| \ge q|N(X)|/2 \ge \frac{50 (\log n)^{7c+19}}{q^5}|X|$ is at least $1 - \exp(-\Omega(q|N(X)|)) \ge 1 - \exp(-\Omega((\log n)^{7c+19}/q^5)|X|)$. Hence by a union bound, for all $X \subseteq \{x_1, \ldots, x_r, y_1, \ldots, y_r\}$ with $|X| \ge r/2$, we have  $|N(X) \cap V'| \ge \frac{q|N(X)|}{2}-|Z| \ge \frac{50 (\log n)^{7c+19}}{q^5}|X| - |Z| \ge \frac{48 (\log n)^{7c+19}}{q^5}|X|$ with probability at least $1 - \frac{1}{n^2}$.

			Fix an outcome of $V$ such that the properties \ref{T1}--\ref{T3} hold simultaneously.
			Write $\ell \coloneqq (\log n)^{c+2}+1$.
			For a set of vertices $X$ and integer $d \ge 1$, define $R^d(X) \coloneqq B^d_{G[V']}(N(X) \cap V')$. Let $X_1$ be the set of vertices $x$ in $\{x_1, \ldots, x_r\}$ satisfying $|R^{\ell \log n}(x)| \le \frac{|V|}{2}$. 
			\begin{claim}
            \label{claim:X1atmostr2}
				$|X_1| < \frac{r}{2}$.
			\end{claim}
			\begin{proof}
				Suppose that $|X_1| \ge \frac{r}{2}$. We will first show that there is a sequence $(X_i)_{i \ge 1}$, such that for every $i \ge 1$, $X_{i+1} \subseteq X_{i}$, $|X_{i+1}| \le \max\{1, \frac{|X_i|}{2}\}$, and 
				\begin{equation} \label{eqn:R}
					\left|R^{i\ell}(X_i)\right| > \frac{|V|}{2}. 
				\end{equation}

				Indeed, notice that $|N(X_1) \cap V'| \ge \frac{48 (\log n)^{7c+19}}{q^5} |X_1| \ge \frac{r}{2} \cdot \frac{4 (\log n)^{7c+19}}{q^5} \ge \frac{(\log n)^{5c+11}}{q^5}|Z|$, by \ref{T3} and since $|X_1| \ge \frac{r}{2}$ and $|Z| \le 2r (\log n)^{2c+8}$. Thus, by \ref{T1}, $|R^{(\log n)^{c+2}}(N(X_1) \cap V')| > \frac{|V|}{2}$, showing that $|R^{\ell}(X_1)| > \frac{|V|}{2}$, proving that \eqref{eqn:R} holds for $i = 1$. (Notice that \eqref{eqn:NU-expand} is only applicable here if $|N(X_1) \cap V'| \le \frac{2n}{3}$, but if this fails then \eqref{eqn:R} holds trivially for $i = 1$.)	

				Now suppose that $X_1 \supseteq \ldots \supseteq X_i$ is a sequence, such that for all $j \in [i]$, $|X_{j}| \le \max\{1, \frac{|X_{j-1}|}{2}\}$ and \eqref{eqn:R} holds. If $|X_i| = 1$ we take $X_{i+1} = X_i$ (which clearly satisfies the required properties). Otherwise, by partitioning $X_i$ into at most three sets of size at most $\frac{|X_i|}{2}$, there is a subset $X_{i+1} \subseteq X_i$ of size at most $\frac{|X_i|}{2}$ satisfying $|R^{i\ell}(X_{i+1})| \ge \frac{|V|}{6}$. 
				We will show that $X_{i+1}$ satisfies \eqref{eqn:R}. Indeed, consider the set $U \coloneqq R^{i\ell}(X_{i+1})$. 
				Then, \eqref{eqn:R} trivially holds if $|U| \ge \frac{2n}{3}$, so suppose otherwise.
				We have $|U| \ge \frac{|V|}{6} \ge \frac{qn}{12} \ge \frac{q}{12} \cdot r \cdot \frac{100 (\log n)^{7c+19}}{q^6} \ge \frac{(\log n)^{5c+11}}{q^5}|Z|$, where the third inequality follows from the assumption that $|N(\{x_1, \ldots, x_r\})| \ge r \cdot \frac{100(\log n)^{7c+19}}{q^6}$ (and by the trivial inequality $n \ge |N(\{x_1, \ldots, x_r\})|$). Thus, by \ref{T1},
				\begin{equation*}
					\left|B_{G[V']}^{(\log n)^{c+2}}\left(N(U) \cap V'\right)\right|
					> \frac{|V|}{2}.
				\end{equation*}
				Notice that
				\begin{equation*}
					B^{(\log n)^{c+2}}_{G[V']}\left(N(U) \cap V'\right) 
					\subseteq B_{G[V']}^{\ell}(U)
					= R^{(i+1) \ell}(X_{i+1}),
				\end{equation*}
				so $X_{i+1}$ has the required properties.
				This completes the proof of the existence of a sequence $(X_i)_{i \ge 1}$ with the desired properties. 

				Fix such a sequence, and take $i \coloneqq \log n$. Then, $|X_i| \le \max\{1, 2^{-\log n}|X_1|\} = 1$. This means $|R^{\ell \log n}(x)| > \frac{|V|}{2}$ for the single vertex $x$ in $X_i$, contradicting the choice of $X_1$, and completing the proof of the claim.
			\end{proof}
			Take $Y_1$ to be the set of vertices $y \in \{y_1, \ldots, y_r\}$ satisfying $|R^{\ell \log n}(y)| \le \frac{|V|}{2}$. Then, analogously to \Cref{claim:X1atmostr2}, $|Y_1| < \frac{r}{2}$.
			Hence, there exists $i \in [r]$ such that $|R^{\ell \log n}(x_i)|, |R^{\ell \log n}(y_i)| > \frac{|V|}{2}$, showing that, with probability at least $1 - \frac{1}{n}$, there is an $(x_i,y_i)$-path of length at most $2\ell \log n + 2 \le (\log n)^{c+4}$, with internal vertices in $V'$, completing the proof of \Cref{lem:hall-condition}.
		\end{proof}

	\subsection{Many vertex-disjoint path connections through a random set} \label{subsec:expansion-step-three}

		Finally, we are ready to prove \Cref{lem:connecting}, which states that given a sufficiently robust expander $G$ and a random subset of vertices $V$, then with high probability for any set $\{x_1, \ldots, x_r, y_1, \ldots, y_r\}$ of vertices outside of $V$, whose subsets all expand well, the pairs $(x_i, y_i)$, $i \in [r]$, can be connected using vertex-disjoint paths through $V$.

		\begin{proof}[ of \Cref{lem:connecting}]
			Fix an outcome of $V$ for which the conclusion of \Cref{lem:hall-condition} holds. For each $i \in [r]$, let $\cH_i$ be the hypergraph on the vertex set $V$ where each edge is the set of internal vertices of $P$, for all $(x_i, y_i)$-paths $P$ of length at most $\ell \coloneqq (\log n)^{c+4}$ with internal vertices in $V$.

			We will apply \Cref{thm:hall}. Fix a subset $I \subseteq [r]$. We wish to show that there is a matching of size at least $\ell(|I|-1)$ in $\cH' \coloneqq \bigcup_{i \in I} \cH_i$. Suppose that no such matching exists. Let $\cM'$ be a maximal matching in $\cH'$, and let $Z$ be the set of vertices in $\cM'$. Then, $|\cM'| < \ell(|I|-1)$, every edge in $\cH'$ intersects $Z$, and $|Z| \le \ell^2(|I|-1) \le |I|(\log n)^{2c+8}$. Hence, by applying \Cref{lem:hall-condition} (with $I$ playing the role of $[r]$) we obtain that for some $i \in I$ there is an $(x_i, y_i)$-path $P$ of length at most $\ell$ whose internal vertices are in $V \setminus Z$. But this means that $V(P) \setminus \{x_i, y_i\}$ is an edge in $\cH'$ that does not intersect $Z$, a contradiction.

			Therefore, the assumptions in \Cref{thm:hall} are satisfied, showing that there is a matching $\cM$ of size $r$ in $\bigcup_{i \in [r]}\cH_i$, such that the $i$-th edge of the matching is in $\cH_i$. Let $P_i$ be the path corresponding to the $i$-th edge in $\cM$. Then, for each $i \in [r]$, $P_i$ is an $(x_i, y_i)$-path with internal vertices in $V$. The internal vertex sets of the paths $P_1, \ldots, P_r$ are pairwise disjoint and are disjoint from $\{x_1, \ldots, x_r, y_1, \ldots, y_r\}$, as it is assumed that the vertices in this set are not contained in $V$, thereby proving the lemma.
		\end{proof}

\section{Finding an almost-spanning \texorpdfstring{$F$}{F}-subdivision in a nearly regular expander}
\label{sec:almostspanningFsubdivisioninexpander}

In this section we prove our second key lemma (Lemma~\ref{lem:almostspannningtopological}) which shows that any sufficiently regular expander contains an \emph{almost-spanning} subdivision of any given (non-empty) graph $F$.

\begin{lemma}
	\label{lem:almostspannningtopological}
	Let $c, \eps, d, s, n$ be parameters such that $c > 0$ is fixed, $n$ is sufficiently large, $0 < \eps \le \frac{1}{(\log n)^{4}}$, $d \ge (\log n)^{10c+51}$ and $s \ge \frac{d}{4 (\log n)^{c}}$.
	Let $F$ be a non-empty graph on at most $\frac{\sqrt{d}}{(\log n)^{2}}$ vertices, and let $H$ be an $n$-vertex $(\frac{1}{8}, c, s)$-expander such that $\Delta(H) \le d$, $d(H) \ge d(1 - \eps)$ and $\delta(H) \ge \frac{d(H)}{2}$. Then, $H$ contains a subdivision of $F$ covering all but at most $\frac{n}{\log n}$ vertices of $H$.
\end{lemma}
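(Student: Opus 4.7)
The strategy mirrors Steps 2--3 of the outline in \Cref{sec:proofsketch}, carried out inside the single expander $H$. First, I would randomly partition $V(H) = V_0 \cup R \cup X_1 \cup \cdots \cup X_t$ where $|V_0| = \Theta(n/\log n)$ is a connection reservoir, $R$ is a random subset that will host a short $F$-subdivision, and $X_1, \ldots, X_t$ have roughly equal size with $t$ a small polynomial in $\log n$. Using Chernoff bounds and the Lipschitz concentration inequality (\Cref{lem:concentration}), one verifies that with positive probability: (i) every vertex $v$ sees close to $d_H(v)/t$ neighbours in each $X_i$, so $H[X_i, X_{i+1}]$ is very nearly bi-regular; (ii) the induced subgraph $H[R]$ inherits the $(\tfrac{1}{8}, c, \cdot)$-expander property with only slightly weaker parameters; and (iii) in the spirit of \Cref{cor:expansion-random}, every not-too-small subset of $V(H) \setminus V_0$ expands well into $V_0$.

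Next, I construct a short $F$-subdivision $T$ inside $H[R]$, using classical subdivision theorems in sublinear expanders (Koml\'os--Szemer\'edi, Bollob\'as--Thomason) together with \Cref{lem:connecting} for routing the edges of $F$; the bound $|V(F)| \le \sqrt{d}/(\log n)^2$ and $e(F) \le d/(\log n)^4$ guarantee that $T$ uses only a tiny fraction of the vertices of $H$, with edges realized by internally vertex-disjoint paths of length at most $(\log n)^{c+4}$. Then I build long paths through $V(H) \setminus (V_0 \cup R)$: for each $1 \le i \le t-1$, Vizing's theorem applied to $H[X_i, X_{i+1}]$ produces a matching $M_i$ covering all but $O(\eps |X_i|)$ vertices of $X_i$ (using the near-bi-regularity established above). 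The connected components of $M_1 \cup \cdots \cup M_{t-1}$ that traverse every $X_i$ are vertex-disjoint paths $P_1, \ldots, P_r$ whose union misses only $O(\eps t \cdot n)$ vertices of $V(H) \setminus (V_0 \cup R)$; the hypothesis $\eps \le (\log n)^{-4}$ together with $t = O((\log n)^3)$ keeps this loss well below $n/\log n$.

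Finally, I connect $P_1, \ldots, P_r$ into a single long path through $V_0$ and splice this path into $T$ in place of one of $T$'s subdivision paths. The connection is done iteratively: in each round, greedily realize as many vertex-disjoint length-two connecting paths through $V_0$ as possible between pairs of leaves of the current paths; when this process stalls, a structural argument identifies a subset $S$ of the remaining leaves that expands exceptionally well into $V_0$, after which \Cref{lem:connecting} joins the pairs with endpoints in $S$ via short vertex-disjoint paths through $V_0$, halving the number of remaining pairs. After $\Theta(\log n)$ rounds all the $P_i$'s are linked, and a final application of \Cref{lem:connecting} reroutes one subdivision path of $T$ through the resulting concatenation, yielding the desired almost-spanning $F$-subdivision.

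The main obstacle is the last phase: \Cref{lem:connecting} demands that the leaves in $S$ expand into $V_0$ by a factor of $(\log n)^{\Theta(c)}$, which ties together the choices of $t$, $|V_0|$ and $r$, and the iterative procedure must be arranged so that these expansion requirements hold throughout all $\Theta(\log n)$ rounds. The hypothesis $d \ge (\log n)^{10c+51}$ is calibrated so that $s \ge d/(4(\log n)^c)$ remains large enough to support all the repeated applications of \Cref{lem:connecting}, while $\eps \le (\log n)^{-4}$ supplies the regularity slack needed for the matchings $M_i$ to be nearly perfect.
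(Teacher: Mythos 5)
Your proposal follows essentially the same route as the paper's proof: the same random partition $V_0 \cup R \cup X_1 \cup \cdots \cup X_t$ with $t = \Theta((\log n)^3)$, the Vizing-based matchings producing the paths $P_1,\ldots,P_r$, the iterative connection through $V_0$ alternating greedy length-two links with applications of \Cref{lem:connecting} to a well-expanding leaf set, and the final splicing of the long path into a subdivision built in $H[R]$ via \Cref{thm:BT}. The only cosmetic difference is that the paper obtains the $K_f$-subdivision in $H[R]$ directly from its large average degree rather than routing edges of $F$ with \Cref{lem:connecting}.
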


Lemma~\ref{lem:almostspannningtopological} implies that every sufficiently regular expander contains a nearly Hamilton cycle. Indeed, by letting $F$ be a triangle in the lemma, we obtain that $H$ contains a cycle $C$ covering all but at most $\frac{n}{\log n}$ vertices of $H$. 
This statement is of independent interest and is likely to have other applications. 

For instance, we can use it to prove that nearly all vertices of any $d$-regular graph of order $n$, with $d \ge (\log n)^{130}$, can be covered by at most $\frac{n}{d+1}$ vertex-disjoint cycles.    
Recently, Montgomery, M\"uyesser, Pokrovskiy and Sudakov \cite{montgomery2024approximate} proved a similar result, showing that nearly all vertices of every $d$-regular graph of order $n$ can be covered by at most $\frac{n}{d+1}$ vertex-disjoint paths, thus making progress on a conjecture of Magnant and Martin~\cite{magnant2009note} (which states that all vertices can be covered by the same number of vertex-disjoint paths). Their result is stronger than ours in that it applies for all $d$, but ours is also slightly stronger in that it finds vertex-disjoint cycles rather than paths.

Additionally, combining \Cref{lem:almostspannningtopological} with \Cref{cor:findexpander}, 
we recover a recent result of Dragani\'c, Methuku, Munh\'a Correia, and Sudakov~\cite{draganic2023cycles} 
on finding a cycle with many chords, albeit with a slightly larger logarithmic factor 
in the number of edges required to guarantee such a cycle. We elaborate on both applications in \Cref{sec:application:chords}.
 
\noindent In the rest of this section, we prove Lemma~\ref{lem:almostspannningtopological}. Let us first provide a brief outline of the structure of the proof. We start by taking a random partition of the vertex set of our $(\frac{1}{8}, c, s)$-expander $H$ into sets $V_0, X_1, \ldots, X_t, R$, where the sizes of these random sets are chosen carefully in Section~\ref{subsec:setupparameters}. In particular, we ensure that $X \coloneqq \bigcup_{i = 1}^t X_i$ contains nearly all vertices of $H$ and the size of $V_0$ is sufficiently larger than the sizes of the sets $X_1, \ldots, X_t, R$. In Section~\ref{subsec:finding a linear forest}, we show that, with high probability, there exists a small collection of paths $P_1, \dots, P_r$ contained in $X$, with leaves in $X_1 \cup X_t$, that covers nearly all vertices of $H$. Then, in Section~\ref{subsec:joiningthepaths}, we iteratively connect these paths through $V_0$ to construct a nearly Hamilton path $P$. More precisely, in each iteration, we join a constant proportion of the paths $P_1, \dots, P_r$ via $V_0$ using the following strategy: first, we greedily connect as many paths as possible through $V_0$ using vertex-disjoint paths of length two. When this is no longer possible, we show that there is a small subset $S$ of the leaves of $P_1, \dots, P_r$ that expands well into $V_0$, allowing us to apply \Cref{lem:connecting} to connect the paths whose leaves lie in $S$ via $V_0$ (see \Cref{claim:connectingverticesofY}). Finally, in Section~\ref{sec:findingFsubdivisioncontainingP}, we find a subdivision of $F$ in $H$ that contains the nearly Hamilton path $P$, yielding the desired almost spanning $F$-subdivision in $H$ (see Figure~\ref{fig:almostspanningsubdivision}).

\begin{figure}[h]
    \centering
	\includegraphics[width= \textwidth]{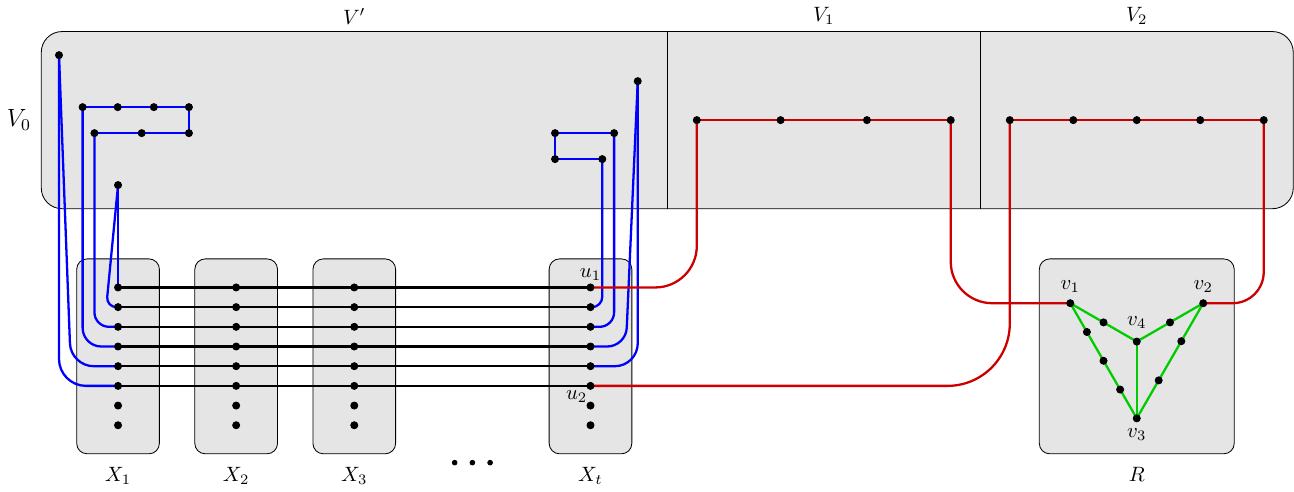}
    \caption{The figure shows how to construct an almost spanning $F$-subdivision in $H$ (when $F = K_4$).}
\label{fig:almostspanningsubdivision}
\end{figure}

\subsection{Setting up the parameters}
\label{subsec:setupparameters}

Throughout the section, we work with parameters $c,\eps,d,s,n$ satisfying the conditions
from Lemma~\ref{lem:almostspannningtopological}, namely
\[
c>0,\quad n \text{ is sufficiently large},\quad 
0<\eps \le (\log n)^{-4},\quad 
d \ge (\log n)^{10c+51},\quad \text{ and } 
\quad s \ge \frac{d}{4(\log n)^c}.
\]

We let $H$ be an $n$-vertex $(\frac{1}{8}, c, s)$-expander with $s \ge \frac{d}{4 (\log n)^c}$ such that $\Delta(H) \le d$, $d(H) \ge d(1 - \eps)$ and $\delta(H) \ge \frac{d(H)}{2}$. Moreover, throughout this section, we let

\begin{equation*}
	q_1 \coloneqq \frac{1}{3\log n}, \quad 
	q_2 \coloneqq \frac{6}{(\log n)^3}, \quad 
	\eps_0 \coloneqq \frac{1}{(\log n)^4}, \quad
	t \coloneqq \frac{1}{6} (\log n)^3 - \frac{1}{18} (\log n)^2-1.
\end{equation*}

We make some quick observations that will be used in the rest of the proof. Note that $d(H)  \ge d(1 - \eps) \ge d(1 - \eps_0)$ since $\eps \le \frac{1}{(\log n)^4} = \eps_0 $.
Also note that $q_1 + (t+1)q_2 = 1$, that $q_2 \le \frac{q_1}{4000 \log n}$ and that $t \le \frac{1}{q_2}$. Moreover, $d \ge (\log n)^{10 c + 51} \ge \frac{8(\log n)^{10c +21}}{(2q_2)^{10}}$, $\eps_0 \ge 2d^{-1/4}$, and $\frac{\eps_0}{q_2} = \frac{1}{6\log n}$.

Let us define a random partition $\{V_0, X_1, \ldots, X_t, R\}$ of $V(H)$ as follows. 
Independently, for each vertex $v\in V(H)$, the probability that $v$ is included in $V_0$ is $q_1$, the probability that $v$ is included in $X_i$  is $q_2$ for each $i \in [t]$, and the probability that $v$ is included in $R$ is also $q_2$. (Notice that these probabilities sum to $q_1 + (t+1)q_2 = 1$.) For convenience, let $X \coloneqq \bigcup_{i = 1}^t X_i$, so that $V(H) = V_0 \cup X \cup R$. 

\subsection{Finding vertex-disjoint paths \texorpdfstring{$P_1, \ldots, P_r$}{P1, \ldots, Pr} covering most of the vertices of \texorpdfstring{$X$}{X}}
\label{subsec:finding a linear forest}

	In the next claim, we show that, with high probability, there exists a small collection of paths in $X$, with endpoints in $X_1\cup X_t$, that together cover almost all vertices of $H$. We achieve this by finding large matchings in the bipartite subgraphs $H[X_i,X_{i+1}]$ for each $1\le i\le t-1$, which are then combined to form a large linear forest.  The idea of ``gluing'' together matchings to construct a large linear forest is a well-known technique. Variants of this approach appear, for example, in the work of Kelmans, Mubayi and Sudakov~\cite{KelmansMubayiSudakov2001} on tree packings as well as more recent works including the random Hall--Paige paper of M{\"u}yesser and Pokrovskiy~\cite{MuyesserPokrovskiy2025RandomHallPaige} and the approximate path decompositions of regular graphs by Montgomery, M{\"u}yesser, Pokrovskiy and Sudakov~\cite{montgomery2024approximate}.

    In Section~\ref{subsec:joiningthepaths}, we will join these paths to form a nearly Hamilton path in $H$.

	\begin{claim}
		\label{claim:findingpaths}
		With probability at least $1 - \exp \left(-\Omega(\log n)^2\right),$ there exists a collection of vertex-disjoint paths $P_1, \ldots, P_r$ satisfying the following properties.
		\begin{enumerate}[label = \rm(P\arabic*)]
			\item \label{itm:findpaths-1} 
				$\bigcup_{i \in [r]}V(P_i) \subseteq X$.
			\item \label{itm:findpaths-2}
				For $i \in [r]$, the leaves of $P_i$ are in $X_1 \cup X_t$.
			\item \label{itm:findpaths-3}
				$|X \setminus \bigcup_{i = 1 }^r V(P_i)| \le \frac{1}{2\log n} |X|$.
		\end{enumerate}
	\end{claim}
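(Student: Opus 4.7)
The plan is to extract, for each $i \in [t-1]$, a near-perfect matching $M_i$ in the bipartite graph $H[X_i, X_{i+1}]$, take the union $M = M_1 \cup \cdots \cup M_{t-1}$, and let the desired paths be the connected components of $M$ that run from $X_1$ all the way to $X_t$. The main tools will be K\"onig's edge-colouring theorem and the concentration inequalities from \Cref{sec:conc}.

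First I would fix a small relative error $\eps_c \coloneqq \eps_0/2$ and, by Chernoff's bound (applied to $|X_i| \sim \mathrm{Bin}(n, q_2)$ and to the degree of each fixed vertex in $H[X_i, X_{i+1}]$, which is dominated by $\mathrm{Bin}(d, q_2)$) and \Cref{lem:concentration} (applied to $e_H(X_i, X_{i+1})$, viewed as a $d$-Lipschitz function of the vertex-to-part assignment since reassigning a vertex affects at most $\Delta(H) \le d$ of its edges), show that with probability at least $1 - \exp(-\Omega((\log n)^2))$, the following hold simultaneously for every $i \in [t-1]$:
$$|X_i| = (1 \pm \eps_c) q_2 n, \qquad e_H(X_i, X_{i+1}) \ge (1 - \eps_c) q_2^2 n d(H), \qquad \Delta(H[X_i, X_{i+1}]) \le (1 + \eps_c) q_2 d.$$
The tightest constraint comes from the middle bound, whose failure probability is $\exp(-\Omega(\eps_c^2 q_2^4 n)) = \exp(-\Omega(n / (\log n)^{20}))$, which easily absorbs the union bound over $i$. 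K\"onig's theorem applied to each $H[X_i, X_{i+1}]$ then yields a matching $M_i$ with
$$|M_i| \;\ge\; \frac{e_H(X_i, X_{i+1})}{\Delta(H[X_i, X_{i+1}])} \;\ge\; \frac{(1-\eps_c)(1-\eps) q_2^2 n d}{(1 + \eps_c) q_2 d} \;\ge\; q_2 n (1 - 2\eps_c - \eps),$$
so at most $(3\eps_c + \eps) q_2 n$ vertices of each $X_i$ are $M_i$-unmatched, and the same holds for $X_{i+1}$.

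Next I would analyse $M = M_1 \cup \cdots \cup M_{t-1}$. Every vertex has degree at most $2$ in $M$, and $M$ is acyclic: in any putative cycle, a vertex of minimum $X$-index $k$ would need both of its incident $M$-edges to lie in $M_{k-1}$ and $M_k$, yielding a neighbour in $X_{k-1}$ and contradicting the minimality of $k$ (while $k = 1$ would force degree at most $1$). Hence $M$ is a disjoint union of paths, and moreover each such path is monotone in the $X$-index. I would take $P_1, \ldots, P_r$ to be exactly the components with one leaf in $X_1$ and one in $X_t$; these satisfy \ref{itm:findpaths-1} and \ref{itm:findpaths-2} by construction. For \ref{itm:findpaths-3}, every $v \in X \setminus \bigcup_j V(P_j)$ lies on an incomplete monotone path whose forward or backward chain terminates early at some $M_j$-unmatched endpoint $w$; charging $v$ to $w$ (whose own chain contains at most $t$ vertices) gives
$$\Bigl|X \setminus \bigcup_j V(P_j)\Bigr| \;\le\; (3\eps_c + \eps) q_2 n \cdot t \cdot t \;\le\; \frac{5\eps_0 t}{2(1-\eps_c)} |X| \;\le\; \frac{|X|}{2\log n},$$
using $\eps \le \eps_0$, $\eps_c = \eps_0 / 2$, and $t \le (\log n)^3 / 6$, which make the prefactor at most $5/(12 \log n) < 1/(2 \log n)$.

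The only real obstacle is parameter calibration: the per-matching deficit $O(\eps_c + \eps)$ is amplified by a factor of $t$ when passing to uncovered vertices, so one needs $(\eps_c + \eps)\, t = O(1/\log n)$. This is precisely what forces the hypothesis $\eps \le (\log n)^{-4}$ in \Cref{lem:almostspannningtopological} and the choice $t \approx (\log n)^3/6$ in \Cref{subsec:setupparameters}; everything else is standard concentration and a short combinatorial charging argument.
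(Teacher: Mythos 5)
Your proposal follows essentially the same route as the paper: the same three concentration statements (sizes of the $X_i$, edge counts $e_H(X_i,X_{i+1})$ via the Lipschitz/martingale bound, and maximum degree of $H[X_i,X_{i+1}]$ via Chernoff), a near-perfect matching $M_i$ in each $H[X_i,X_{i+1}]$ extracted from an edge-colouring bound (you use K\H{o}nig where the paper uses Vizing -- immaterial here), and the observation that $M_1\cup\dots\cup M_{t-1}$ is a union of monotone paths of which you keep those running from $X_1$ to $X_t$. The only genuine difference is the final bookkeeping: the paper iteratively restricts, setting $M_1'=M_1$ and letting $M_i'$ be the edges of $M_i$ meeting $M_{i-1}'$, so that $|M_{t-1}'|\ge |X_1|-2\eps_0|X|$ counts the complete paths directly, whereas you bound the uncovered vertices by a charging argument onto unmatched "defect" vertices.

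That charging step is where your calculation is off by a factor of $2$, and with your choice $\eps_c=\eps_0/2$ this actually breaks the final inequality. Each matching $M_j$ leaves up to $(3\eps_c+\eps)q_2n$ unmatched vertices on \emph{each} side ($X_j$ and $X_{j+1}$), so the pool of possible charge targets $w$ has size up to $2(t-1)(3\eps_c+\eps)q_2n$, not $(3\eps_c+\eps)q_2n\cdot t$; your bound then becomes roughly $5\eps_0 t\,|X|\approx \frac{5}{6\log n}|X|$, which exceeds the target $\frac{1}{2\log n}|X|$. This is easily repaired in either of two ways: (i) count level by level -- for each fixed $k$, an uncovered $v\in X_k$ has either a broken backward chain terminating at a distinct $M_{a-1}$-unmatched vertex of some $X_a$ with $a>1$, or a broken forward chain terminating at a distinct $M_b$-unmatched vertex of some $X_b$ with $b<t$, giving at most $(t-1)(3\eps_c+\eps)q_2n$ uncovered vertices in $X_k$ and hence $t(t-1)(3\eps_c+\eps)q_2n$ in total, which is exactly your claimed bound; or (ii) simply take $\eps_c$ smaller (say $\eps_0/100$), after which even the doubled bound is at most $\frac{1}{3\log n}|X|+o(\frac{|X|}{\log n})$. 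With either fix the argument is complete and correct.
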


	\begin{proof}[ of claim]
		Let us first show that the following three properties hold simultaneously with probability $1 - \exp\left(-\Omega\left((\log n)^{2}\right)\right)$.
		\begin{enumerate}[label = \rm(X\arabic*)]
			\item \label{X1item}
				$e_H(X_i, X_{i+1}) \ge d(1 - \eps)n q_2^2 - d q_2^2 n^{2/3}$ for all $i \in [t-1]$. 
			\item \label{X2item}
				$|X_i| = (1 \pm n^{-1/3})q_2 n $ for each $i \in [t]$.
			\item \label{X3item}
				$\Delta(H[X_i, X_{i+1}]) \le q_2 d(1+d^{-1/3})$ for all $i \in [t-1]$.
		\end{enumerate}
		To show this, it suffices to prove that each of the properties \ref{X1item}--\ref{X3item} holds with probability $1 - \exp\left(-\Omega\left((\log n)^{2}\right)\right)$.

		For \ref{X1item}, note that for any $i \in [t-1]$, we have $\mathbb{E}[e_H(X_i, X_{i+1})] = e(H) \cdot 2q_2^2 \ge d(1 - \eps)n \cdot q_2^2$ since for any edge $uv \in E(H)$, the probability that $u \in X_i, v \in X_{i+1}$ or $v \in X_i, u \in X_{i+1}$ is $2q_2^2$. Now note that changing the outcome of whether a certain vertex $v \in V(H)$ belongs to $X_i$, $X_{i+1}$, or neither, changes $e_H(X_i, X_{i+1})$ by at most $\Delta(H) \le d$, so $e_H(X_i, X_{i+1})$ is $d$-Lipschitz. Hence, by Lemma~\ref{lem:concentration}, $$\Pr\left[e_H(X_i, X_{i+1}) < d(1 - \eps)n q_2^2 - d q_2^2 n^{2/3}\right] \le 2 \exp \left(-\Omega(q_2^4n^{1/3})\right).$$
		Therefore, by the union bound, since $t \le n$, \ref{X1item} holds with probability at least $1 - 2t \exp \left(-\Omega(q_2^4n^{1/3})\right) \ge 1 - \exp \left(-\Omega(\log n)^2\right)$, as desired. 

		For \ref{X2item}, note that we have $\mathbb{E}[|X_i|]= q_2 n$ for each $i \in [t]$. Then, by the Chernoff bound (Theorem~\ref{chernoff}), the probability that $|X_i| =  (1 \pm n^{-1/3})q_2 n$ is at least $1 - \exp \left(-\Omega(q_2 n^{1/3})\right)$. Again, by the union bound, since $t \le n$, \ref{X2item} holds with probability at least $1 - t \exp \left(-\Omega(q_2 n^{1/3})\right) \ge 1 - \exp \left(-\Omega(\log n)^2\right)$, as desired. 

		Finally, for \ref{X3item}, note that since $|N_H(v)| \le d$ for any $v \in V(H)$, we have $\mathbb{E}[|N_H(v) \cap X_i|] \le q_2 d$. Hence, by the Chernoff bound (Theorem~\ref{chernoff}), for any $v \in V(H)$, $|N_H(v) \cap X_i| \le q_2 d(1+d^{-1/3})$ with probability at least $ 1 - \exp \left(-\Omega(\log n)^2\right)$. Therefore, by the union bound, for every $v \in V(H)$ and $i \in [t]$, $|N_H(v) \cap X_i| \le q_2 d(1+d^{-1/3})$ with probability at least $1 - t n \exp \left(-\Omega(\log n)^2\right) \ge 1 - \exp \left(-\Omega(\log n)^2\right)$, implying that \ref{X3item} holds with the required probability. This shows that \ref{X1item}--\ref{X3item} hold simultaneously with probability $1 - \exp\left(-\Omega\left((\log n)^{2}\right)\right)$, as desired.

		We now assume that \ref{X1item}--\ref{X3item} hold, and use this to deduce that the paths $P_1, \ldots, P_r$ satisfying \ref{itm:findpaths-1}--\ref{itm:findpaths-3} exist with the required probability.
		By Vizing's theorem, for every $i \in [t-1]$, there is a matching $M_i$ in $H[X_i, X_{i+1}]$ satisfying
		\begin{align*}
			|M_i| 
			\ge \frac{e_H(X_i, X_{i+1})}{\Delta(H[X_i, X_{i+1}])+1} 
			& \ge \frac{d(1 - \eps)n q_2^2 - d q_2^2 n^{2/3}}{q_2 d(1+d^{-1/4})} \\[.5em]
			& \ge q_2n(1 - \eps-n^{-1/3})(1-d^{-1/4}) \\[.5em]
			& \ge \frac{|X_i|}{(1 + n^{-1/3})} \cdot (1 - \eps-n^{-1/3})(1-d^{-1/4}) \\[.5em]
			& \ge |X_i|(1 - n^{-1/3})(1 - \eps - n^{-1/3})(1 - d^{-1/4}) \\
			& \ge |X_i|(1 - \eps - 2n^{-1/3} - d^{-1/4}) \\
			& \ge |X_i|(1 - \eps_0 - 2d^{-1/4}) 
			\ge |X_i|(1 - 2\eps_0).
		\end{align*}

		Let $M'_1 \coloneqq M_1$, and for each $2 \le i \le t-1$, let $M'_i \subseteq M_i$ be the subset of edges of $M_i$ which are incident to an edge of $M'_{i-1}$. Then, for each $2 \le i \le t-1$, since $|M_i| \ge |X_i|(1 - 2\eps_0),$ we have $|M'_i| \ge |M'_{i-1}|-2\eps_0 |X_{i}|$. This implies that $|M'_{t-1}| \ge |M_{1}|-2\eps_0 (\sum_{j=2}^{t-1}|X_{j}|) \ge |X_{1}|-2\eps_0 (\sum_{j=1}^{t-1}|X_{j}|) \ge |X_{1}|-2\eps_0 |X|$. Now, since every edge of $M'_{i}$ is incident to an edge of $M'_{i-1}$ for every $2 \le i \le t-1$, it follows that each edge of $M'_{t-1}$ lies on a path from $X_1$ to $X_t$ in $M_1 \cup \ldots \cup M_{t-1}$ which contains exactly one vertex from each $X_j$ (for $j \in [t]$), such that the paths are vertex-disjoint. Let us denote these paths by $P_1, \ldots, P_r$, where $r \coloneqq |M'_{t-1}|$. Then, it is easy to see that \ref{itm:findpaths-1} and \ref{itm:findpaths-2} hold.
		For \ref{itm:findpaths-3}, notice that since \ref{X2item} holds, $t|X_1| \ge t(1 - n^{-1/3})q_2n = t(1 + n^{-1/3})q_2n \cdot \frac{1 - n^{-1/3}}{1 + n^{-1/3}} \ge |X|(1 - 2n^{-1/3}) \ge |X|(1 - \eps_0)$.
		Thus, $\sum_{i = 1}^r|V(P_i)| = rt \ge (|X_{1}|-2\eps_0 |X|)t \ge |X|(1 - 3 \eps_0 t)$. Hence, $|X \setminus \bigcup_{i = 1 }^r V(P_i)| \le 3 \eps_0 t |X| \le \frac{3 \eps_0}{q_2} |X| = \frac{1}{2\log n} |X|,$ as desired. This proves the claim.
	\end{proof}

\subsection{Iteratively connecting the paths \texorpdfstring{$P_1, \ldots, P_r$}{P1, \ldots, Pr} through \texorpdfstring{$V_0$}{V0} to construct a nearly Hamilton path}
\label{subsec:joiningthepaths}
    In this subsection, our aim is to construct a nearly Hamilton path in $H$ with high probability. 
	We do this in Claim~\ref{claim:almostspanningpath} by repeatedly applying the following claim which shows that if $V$ and $W$ are two random sets, with $W$ only slightly smaller than $V$, then with high probability one can find vertex-disjoint paths (with internal vertices in $V$) that connect a positive proportion of the vertices in any given subset of $W$. Note that, as discussed in Section~\ref{sec:proofsketch}, although the task becomes easier when $W$ is significantly smaller than $V$ (depending on $c$), it is crucial to our argument that $W$ is allowed to be only slightly smaller than $V$.

	\begin{claim}
		\label{claim:connectingverticesofY}
		Let $0 < p_1 , p_2 < 1$ such that $p_2 \le \frac{p_1}{100}$, let $d \ge \frac{8 (\log n)^{10c +21}}{p_2^{10}}$. Let $V, W \subseteq V(H)$ be disjoint random sets such that, independently, for each vertex $v \in V(H)$, the probability that $v$ is included in $V$ is $p_1$, and the probability that $v$ is included in $W$ is $p_2$. Then, with probability at least $1 - \frac{2}{n}$, for every $Y \subseteq W$ of size $k \ge 2$, there are at least $\frac{k}{10}$ pairwise vertex-disjoint paths of length at most $(\log n)^{c+4}$ whose internal vertices belong to $V$ and whose endpoints lie in $Y$. 
	\end{claim}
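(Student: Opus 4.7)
The plan is a two-phase strategy: a greedy step that pairs up vertices of $Y$ using length-$2$ paths through $V$, followed by an application of \Cref{lem:connecting} to produce any still-missing pairs through longer paths in $V$.

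First I will establish that with probability at least $1 - 2/n$ the following good event holds: \textbf{(a)} the conclusion of \Cref{lem:connecting} applies to $(H, V)$ with $q = p_1$; the required bound $s \ge 2(\log n)^{9c+21}/p_1^{10}$ follows from $s \ge d/(4(\log n)^c)$, $d \ge 8(\log n)^{10c+21}/p_2^{10}$ and $p_2 \le p_1$; \textbf{(b)} $|V| \le 2p_1 n$ and $|W| \le 2p_2 n$, via Chernoff; \textbf{(c)} for every $y \in V(H)$, $|N_H(y) \cap V| \ge p_1 d/8$, which follows from $\delta(H) \ge d(H)/2 \ge d/4$, the Chernoff bound applied to each $y$, and a union bound, using that $p_1 d$ dominates any fixed polynomial of $\log n$.

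Now fix any $Y \subseteq W$ with $|Y| = k \ge 2$. Greedily build a maximal collection of vertex-disjoint length-$2$ paths $y\text{--}v\text{--}y'$ with $y, y' \in Y$ distinct and $v \in V$. Let $m$ be the number of such paths, $U \subseteq V$ the set of used middle vertices (so $|U| = m$), and $S \subseteq Y$ the set of unpaired vertices (so $|S| = k - 2m$). If $m \ge k/10$ we are done. Otherwise $m < k/10$ and $|S| > 4k/5$. The maximality of the greedy forces $|N(v) \cap S| \le 1$ for every $v \in V \setminus U$, so the sets $\{N_H(y) \cap (V \setminus U) : y \in S\}$ are pairwise disjoint. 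Together with $|N(y) \cap (V \setminus U)| \ge p_1 d/8 - m$ (from (c) and $|U| = m$), this gives the key expansion estimate
\[
   |N_H(X)| \ \ge\ \sum_{x \in X}|N(x) \cap (V \setminus U)| \ \ge\ |X|\bigl(p_1 d/8 - m\bigr) \qquad \text{for every } X \subseteq S.
\]
I then arbitrarily pair up $2(k/10 - m)$ vertices of $S$ into $k/10 - m$ pairs and apply \Cref{lem:connecting} (with $G = H$, random set $V$ and $q = p_1$) to join them by internally vertex-disjoint paths of length at most $(\log n)^{c+4}$ through $V$; combined with the $m$ greedy paths this yields the required $k/10$ paths.

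The main obstacle is verifying the expansion hypothesis $|N_H(X)| \ge \frac{100(\log n)^{7c+19}}{p_1^6}|X|$ of \Cref{lem:connecting}. By the displayed estimate this reduces to $p_1 d/8 - m \ge \frac{100(\log n)^{7c+19}}{p_1^6}$. From $m < k/10 \le |W|/10 \le p_2 n/5 \le p_1 n/500$ (using $p_2 \le p_1/100$) and $p_1 d/8 \ge 100(\log n)^{10c+21}/p_2^9$ (from the hypothesis on $d$), a direct comparison handles the case in which $d$ is large enough relative to $n$. The delicate case is when greedy terminates with $m$ close to $p_1 d/8$; here one leverages the termination identity $(k - 2m)(p_1 d/8 - m) \le |V|$, obtained by double-counting edges between $S$ and $V \setminus U$, to force $k$ into a regime where $k/10 - m$ is small enough for the disjointness estimate to still yield the needed expansion. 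I expect the careful bookkeeping to cover every regime of $m, k, d, n, p_1, p_2$ simultaneously to be the most delicate part of the argument.
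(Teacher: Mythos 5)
Your overall architecture matches the paper's: a maximal greedy collection of length-two paths through $V$, followed by an application of \Cref{lem:connecting} to the leftover vertices. However, there is a genuine gap in how you verify the expansion hypothesis of \Cref{lem:connecting} for the unpaired set $S$. Your per-vertex lower bound $|N_H(y)\cap(V\setminus U)|\ge p_1d/8-m$ is obtained by subtracting the entire set $U$ of used middle vertices, and it becomes vacuous as soon as $m\ge p_1d/8$. This regime genuinely occurs: $m$ is only bounded by $k/10\le |W|/10\approx p_2n/10$, and in the intended applications $d$ is polylogarithmic in $n$, so $p_2 n\gg p_1 d$. Your fallback via the ``termination identity'' $(k-2m)(p_1d/8-m)\le|V|$ cannot rescue this, because when $p_1d/8-m\le 0$ the identity is trivially satisfied and imposes no constraint on $k$ whatsoever; there is no regime to force $k$ into. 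Note also that shrinking the number of pairs you hand to \Cref{lem:connecting} does not help, since its hypothesis is a per-subset expansion condition on the chosen endpoints, and a vertex $y\in S$ whose entire $V$-neighbourhood lies inside $U$ can destroy it no matter how few pairs you request.

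The missing ingredient is an upper bound, holding with high probability, of the form $|N_H(v)\cap W|\le 2p_2d$ for every $v\in V$ (this is where $p_2\le p_1/100$ does its real work). With it, one counts edges between $U$ and $S$: the set $Y''$ of vertices of $S$ having at least $p_1d/30$ neighbours in $U$ satisfies $|Y''|\cdot\frac{p_1d}{30}\le e_H(Y'',U)\le |U|\cdot 2p_2d\le \frac{k}{10}\cdot 2p_2d$, hence $|Y''|\le 6k\,p_2/p_1\le k/10$. Discarding $Y''$ leaves at least $7k/10$ vertices, each retaining at least $p_1d/5-p_1d/30=p_1d/6$ neighbours in $V\setminus U$; these neighbourhoods are pairwise disjoint by maximality of the greedy collection, which yields $|N_H(Z)|\ge |Z|\,p_1d/6$ for every subset $Z$ of the surviving vertices, uniformly in $m$. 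This is the step your proposal needs and currently lacks.
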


	\begin{proof}[ of claim]
		We claim that the following three properties hold simultaneously with probability at least $1 - \frac{2}{n}$.
		\begin{enumerate}[label=(SP\arabic*)]
			\item \label{claim7-1}
				For any vertex $v \in V$, we have $|N_H(v) \cap W| \le 2 p_2 d$.
			\item \label{claim7-2}
				For any vertex $v \in W$, we have $|N_H(v) \cap V| \ge \frac{p_1 d}{5}$.
			\item \label{claim7-3}
				If $x_1, \ldots, x_r, y_1, \ldots, y_r \in V(H) \setminus V$ are distinct vertices such that every subset $X \subseteq \{x_1, \ldots, x_r, y_1, \ldots, y_r\}$ satisfies $|N_H(X)| \ge \frac{100 (\log n)^{7c+19}}{p_1^6} |X|$, then there are pairwise vertex-disjoint paths $Q_1, \ldots, Q_r$ of length at most $(\log n)^{c+4}$ with internal vertices in $V$, such that $Q_i$ is a path joining $x_i$ and $y_i$.
		\end{enumerate}
		Indeed, since $\Delta(H) \le d$, for any vertex $v \in V$, we have $\mathbb{E}[|N_H(v) \cap W|] \le p_2 d$, so by the Chernoff bound (Theorem~\ref{chernoff}), we have $|N_H(v) \cap W| \le 2 p_2 d$ with probability at least $1 - \exp(-\Omega(p_2d)) \ge 1 - \exp(-\Omega(\log n)^2)$. Therefore, by the union bound, \ref{claim7-1} holds with probability at least $1 - \exp(-\Omega(\log n)^2)$.
		Since $\delta(H) \ge \frac{d(H)}{2} \ge \frac{d}{4}$, for any $v \in W$, we have $\mathbb{E}[|N_H(v) \cap V|] \ge \frac{p_1 d}{4}$, so by the Chernoff bound, we have $|N_H(v) \cap V| \ge \frac{p_1 d}{5}$ with probability at least $1 - \exp(-\Omega(p_1d)) \ge 1 - \exp(-\Omega(\log n)^2)$. Therefore, by the union bound, \ref{claim7-2} holds with probability at least $1 - \exp(-\Omega(\log n)^2)$.
		For \ref{claim7-3}, notice that $H$ is a $(\frac{1}{8}, c, s)$-expander with $s \ge \frac{d}{4 (\log n)^c} \ge \frac{2 (\log n)^{9c+21}}{p_2^{10}} \ge \frac{2 (\log n)^{9c+21}}{p_1^{10}}$, so we can apply \Cref{lem:connecting} with $H, p_1$ playing the roles of $G, q$ respectively. Thus, \ref{claim7-3} holds with probability at least $1 - \frac{1}{n}$.
		By the union bound, \ref{claim7-1}--\ref{claim7-3} hold simultaneously, with probability at least $1 - \frac{2}{n}$.

		In the rest of the proof of this claim, we assume that \ref{claim7-1}--\ref{claim7-3} hold and prove that for every $Y \subseteq W$ of size $k \ge 2$, there are at least $\frac{k}{10}$ pairwise vertex-disjoint paths of length at most $(\log n)^{c+4}$ whose internal vertices are in $V$ and whose leaves are in $Y$. Indeed, let $Y \subseteq W$ be a set of size $k \ge 2$.
		Let $\mathcal{C}$ be a maximal collection of vertex-disjoint paths in $H$ of the form $abc$ with $b \in V$ and $a, c \in Y$. If $|\mathcal C| \ge k/10$, then $\mathcal{C}$ is the collection of vertex-disjoint paths required by the claim. So we may suppose $|\mathcal C| < k/10$. 

		Let $Y' \subseteq Y$ be the set of vertices in $Y$ which are not contained in any of the paths in $\mathcal{C}$. Then, $|Y'| \ge |Y| - 2|\mathcal{C}| \ge |Y| - \frac{k}{5} = \frac{4k}{5}$. 
		Let $Y''$ be the set of vertices in $Y'$ which have at least $\frac{p_1 d}{30}$ neighbours (in $H$) in the set $S \coloneqq \{ b \mid abc \in \mathcal{C} \}$. Note that $S \subseteq V$. We claim that $|Y''| \le \frac{k}{10}$. Indeed,  for every $v \in V$, $|N_H(v) \cap W| \le 2 p_2 d$ by \ref{claim7-1} and $Y'' \subseteq W$, so we have $|N_H(v) \cap Y''| \le 2 p_2 d$. Thus, 
		\begin{equation*}
			|Y''|\cdot \frac{p_1 d}{30} \le e_H(Y'',S) \le |S| \cdot 2p_2 d = |\mathcal{C}|\cdot 2p_2 d \le \frac{k}{10} \cdot 2p_2 d.
		\end{equation*}
		Hence, using that $p_2 \le p_1/100$, we have
		$|Y''| \le 6 k \cdot \frac{p_2}{p_1} \le \frac{k}{10},$ as desired. 
  
  Let $Y^* \coloneqq Y' \setminus Y''$. Then, since $|Y'| \ge \frac{4k}{5}$ and $|Y''| \le \frac{k}{10}$, we have $|Y^*| \ge \frac{4k}{5} - \frac{k}{10} \ge \frac{7 k}{10}$, and for every vertex $v \in Y^*$, we have $|N_H(v) \cap S| < \frac{p_1 d}{30}$ by the choice of $Y^*$. Thus, for every $v \in Y^* \subseteq W$, we have $|N_H(v) \cap (V \setminus S)| \ge \frac{p_1 d}{5} - \frac{p_1 d}{30} = \frac{p_1d}{6}$ since $|N_H(v) \cap V| \ge \frac{p_1 d}{5}$ by \ref{claim7-2}. Moreover, by the maximality of the collection $\mathcal{C}$, for any two distinct vertices $u, v \in Y^*$, we have $(N_H(u) \cap (V \setminus S)) \cap (N_H(v) \cap (V \setminus S)) = \emptyset$. In particular, this implies that for any $Z \subseteq Y^*$, we have $|N_H(Z)| \ge |Z|\frac{p_1 d}{6}$. 

	Let $Y^*_{\mathrm{even}} \subseteq Y^*$ be a subset of size at least $|Y^*|-1$ such that $|Y^*_{\mathrm{even}}|$ is even.
		Note that for every $Z \subseteq Y^*_{\mathrm{even}}$, we have $|N_H(Z)| \ge |Z|\frac{p_1 d}{6} \ge |Z| \frac{100 (\log n)^{7c+19}}{p_1^6}$. 
		Thus, using \ref{claim7-3}, with $Y^*_{\mathrm{even}}$ playing the role of $\{x_1, \ldots, x_r, y_1, \ldots, y_r\}$, we obtain $r = \frac{|Y^*_{\mathrm{even}}|}{2} \ge \frac{|Y^*|-1}{2} \ge \frac{k}{10}$ pairwise vertex-disjoint paths of length at most $(\log n)^{c+4}$ whose internal vertices are in $V$, and whose leaves are in $Y^* \subseteq Y$. 
		This proves that the claim holds with probability at least $1 - \frac{2}{n}$, as desired.
	\end{proof}
 
    Recall that $\{V_0, X_1, \ldots, X_t, R\}$ is a random partition of $V(H)$, and $X = \bigcup_{i = 1}^t X_i$. In the next claim we repeatedly apply Claim~\ref{claim:connectingverticesofY} to join the paths $P_1, \ldots, P_r$ (guaranteed by \Cref{claim:findingpaths})  through the set $V_0$ to obtain a nearly Hamilton path in $H$ with leaves in $X_1 \cup X_t$.
 
	\begin{claim}
		\label{claim:almostspanningpath}
		Let $V'$ be a random subset of $V_0$ obtained by including each vertex of $V_0$ in $V'$ with probability $1/2$. 
		Then, with probability at least $1 - \frac{21 \log n}{n}$, there is a path $P$ in $H$ such that $V(P) \subseteq V' \cup X$, the leaves of $P$ are contained in $X_1 \cup X_t$, and $|X - V(P)| \le \frac{1}{2\log n}|X|$. 
	\end{claim}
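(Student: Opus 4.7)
The plan is to use Claim~\ref{claim:findingpaths} to produce an initial collection of vertex-disjoint paths covering almost all of $X$ with leaves in $X_1\cup X_t$, and then iteratively merge these paths into a single path by routing through progressively fresh random subsets of $V'$, each time invoking Claim~\ref{claim:connectingverticesofY}.

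First apply Claim~\ref{claim:findingpaths} to obtain paths $P_1,\dots,P_r$ in $X$ satisfying \ref{itm:findpaths-1}--\ref{itm:findpaths-3}; this fails with probability $\exp(-\Omega((\log n)^2))\le 1/n$, so condition on success. Each $P_j$ has one leaf in $X_1$ and one in $X_t$. Next, set $T\coloneqq 10\log n$ and refine the partition as follows: independently, each vertex $v\in V_0$ is placed in $V'_i$ with probability $1/(2T)$ for every $i\in[T]$, and in $V_0\setminus V'$ with probability $1/2$. Then $V'=V'_1\cup\cdots\cup V'_T$ matches the distribution in the statement. Overall, each vertex of $V(H)$ lies in $V'_i$ independently with probability $p_1\coloneqq q_1/(2T)=1/(60(\log n)^2)$, and independently lies in $W\coloneqq X_1\cup X_t$ with probability $p_2\coloneqq 2q_2=12/(\log n)^3$. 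For large $n$ one checks $p_2\le p_1/100$ and $d\ge 8(\log n)^{10c+21}/p_2^{10}$, so Claim~\ref{claim:connectingverticesofY} applies to $(V'_i,W)$ for every $i\in[T]$. By a union bound, with probability at least $1-2T/n=1-20\log n/n$, for every $i\in[T]$ and every $Y\subseteq W$ with $|Y|=k\ge 2$ there exist at least $k/10$ pairwise vertex-disjoint paths of length at most $(\log n)^{c+4}$ whose interiors lie in $V'_i$ and whose endpoints lie in $Y$. Together with the earlier event, the total failure probability is at most $21\log n/n$.

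Condition on these events and iteratively merge. Set $\mathcal{Q}_0\coloneqq\{P_1,\dots,P_r\}$ and, for $i=0,1,\dots$, as long as $|\mathcal{Q}_i|\ge 2$ choose $Y_i\subseteq X_1\cup X_t$ consisting of exactly one endpoint of each path in $\mathcal{Q}_i$ (so $|Y_i|=|\mathcal{Q}_i|$); apply the good event for $V'_{i+1}$ to obtain $\ell_i\ge\lceil|\mathcal{Q}_i|/10\rceil\ge 1$ pairwise vertex-disjoint connecting paths with interiors in $V'_{i+1}$ and endpoints in $Y_i$, and use them to merge the corresponding $\ell_i$ pairs of paths, obtaining $\mathcal{Q}_{i+1}$ with $|\mathcal{Q}_{i+1}|=|\mathcal{Q}_i|-\ell_i$. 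Since each path of $\mathcal{Q}_i$ contributes just one vertex to $Y_i$, every connecting path links two \emph{distinct} paths of $\mathcal{Q}_i$, so the connections form a matching on $\mathcal{Q}_i$ and no cycles arise. The merged collection $\mathcal{Q}_{i+1}$ is genuinely a collection of pairwise vertex-disjoint paths because the interiors are in the fresh set $V'_{i+1}$, disjoint from $X$ and from all previously used $V'_{i'}$; moreover, the leaves of each merged path are (still-unused) endpoints of original $P_j$'s, hence lie in $X_1\cup X_t$, so the choice of $Y_{i+1}$ remains legitimate in subsequent steps. The recursion $|\mathcal{Q}_{i+1}|\le 9|\mathcal{Q}_i|/10$ (while $|\mathcal{Q}_i|\ge 10$) and $|\mathcal{Q}_{i+1}|\le|\mathcal{Q}_i|-1$ (otherwise), combined with $r\le n$, yields $|\mathcal{Q}_T|=1$ after $T=10\log n$ iterations. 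Letting $P$ be the unique path in $\mathcal{Q}_T$, we get $V(P)\subseteq X\cup V'_1\cup\cdots\cup V'_T=X\cup V'$, the leaves of $P$ lie in $X_1\cup X_t$, and $\bigcup_j V(P_j)\subseteq V(P)$ together with Claim~\ref{claim:findingpaths} gives $|X\setminus V(P)|\le|X|/(2\log n)$.

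The main obstacle is making sure that repeated uses of Claim~\ref{claim:connectingverticesofY} neither conflict with one another (by reusing internal vertices or leaves) nor introduce cycles. Splitting $V_0$ into $T$ independent random parts $V'_1,\dots,V'_T$ simultaneously secures the probabilistic statement for every iteration via a union bound and guarantees that successive batches of connecting paths have disjoint interiors; choosing exactly one endpoint per current path at each step forces the new connections to be a matching on $\mathcal{Q}_i$, which keeps the updated collection a collection of vertex-disjoint paths with leaves still in $X_1\cup X_t$ and preserves the applicability of Claim~\ref{claim:connectingverticesofY} in all subsequent iterations.
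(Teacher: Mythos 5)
Your proposal is correct and follows essentially the same route as the paper's proof: the same splitting of $V'$ into $10\log n$ independent random pieces, the same application of \Cref{claim:connectingverticesofY} with $W = X_1\cup X_t$, $p_1 = q_1/(20\log n)$ and $p_2 = 2q_2$, the same union bound yielding $1 - \frac{21\log n}{n}$, and the same iterative merging in which exactly one endpoint per current path is selected to guarantee the connections form a matching. The only cosmetic difference is that you track the collection of paths $\mathcal{Q}_i$ directly rather than a linear forest and its leaf count $t_i$.
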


	\begin{proof}
		Since every vertex $v \in V(H)$ belongs to $V_0$ with probability $q_1$, every vertex $v \in V(H)$ is included in $V'$ with probability $\frac{q_1}{2}$. Let $\ell \coloneqq 10 \log n$ and let $q_3 \coloneqq \frac{q_1}{2\ell}$. Note that since $q_2 \le \frac{q_1}{4000 \log n} = \frac{q_1}{400 \ell}$, we have $q_2 \le \frac{q_3}{200}$. 
		Let $V' \coloneqq \bigcup_{i = 1}^{\ell} V'_i$ be a random partition of $V'$ so that every vertex $v \in V(H)$ is included in $V'_i$ with probability $q_3$, for every $i \in [\ell]$. 
		In the rest of the proof of the claim, we condition on the existence of paths $P_1, \ldots, P_r$ satisfying \ref{itm:findpaths-1}--\ref{itm:findpaths-3} of \Cref{claim:findingpaths}, and on the conclusion of \Cref{claim:connectingverticesofY} holding with $V_i', X_1 \cup X_t, q_3, 2q_2$, playing the roles of $V, W, p_1, p_2$, respectively, for each $i \in [\ell]$. (Note that \Cref{claim:connectingverticesofY} is applied $\ell$ times in the latter statement using that $2q_2 \le \frac{q_3}{100}$ and $d \ge \frac{8 (\log n)^{10c+21}}{(2q_2)^{10}}$.) 
		Indeed, the former statement holds with probability at least $1 - \exp\left(-\Omega\left((\log n)^{2}\right)\right)$, and the latter statement holds with probability at least $1 - \frac{2\ell}{n}$, so both statements hold simultaneously with probability at least $1 - \frac{21 \log n}{n}$.

		Let $P_1, \ldots, P_r$ be paths satisfying \ref{itm:findpaths-1}--\ref{itm:findpaths-3} of \Cref{claim:findingpaths}, and let $T_0$ be the linear forest whose components are $P_1, \ldots, P_r$.
		We will iteratively construct linear forests $T_1, \ldots, T_{\ell}$ such that the number of leaves, say $t_i$, in $T_i$ decreases quickly as $i$ grows. More precisely, we claim that there is a sequence of linear forests $T_1, \ldots, T_{\ell}$ such that for every $i \in \{1, \ldots, \ell\}$, $T_i$ satisfies the following properties.
  
		\begin{enumerate}[label = \rm(F\arabic*)]
			\item \label{itm:almostspanningpath-1}
				$V(T_i) \subseteq V(T_0) \cup V'_1 \cup \ldots \cup V'_i$, 
			\item \label{itm:almostspanningpath-2}
				The number of leaves $t_i$ in $T_i$ satisfies $2 \le t_{i} \le \max\{2, \frac{9}{10} t_{i-1}\}$, and
			\item \label{itm:almostspanningpath-3}
				the leaves of $T_i$ are contained in $X_1 \cup X_t$.
		\end{enumerate}
		To prove this, we use induction on $i$. For $i = 0$, note that \ref{itm:almostspanningpath-1} and \ref{itm:almostspanningpath-2} are vacuously true, and \ref{itm:almostspanningpath-3} holds due to the choice of the paths $P_1, \ldots, P_r$. Now, let us assume that there is a sequence of linear forests $T_0, \ldots, T_j$ such that $T_j$ satisfies \ref{itm:almostspanningpath-1}--\ref{itm:almostspanningpath-3}, and show that there is a linear forest $T_{j+1}$ satisfying \ref{itm:almostspanningpath-1}--\ref{itm:almostspanningpath-3}.

		If $t_j = 2$, define $T_{j+1} = T_j$; it is then easy to check that $T_{j+1}$ satisfies \ref{itm:almostspanningpath-1}--\ref{itm:almostspanningpath-3}. So we may assume that $t_j > 2$. Then, in fact, $t_j \ge 4$ as the number of leaves in a linear forest is always even. 
		Now, let $Y$ be a set of leaves in $T_j$ obtained by taking exactly one leaf from each path of $T_j$, so that $|Y| = t_j/2 \ge 2$. Note that since $T_j$ satisfies \ref{itm:almostspanningpath-3}, $Y \subseteq X_1 \cup X_t$. 
		Thus, by the assumption that the conclusion of \Cref{claim:connectingverticesofY} holds (with $V_{j+1}', X_1 \cup X_t, q_3, 2q_2$ playing the roles of $V, W, p_1, p_2$, respectively), there is a collection $\mathcal P$ of at least $\frac{t_j/2}{10} = \frac{t_j}{20}$ vertex-disjoint paths of length at most $(\log n)^{c+4}$ whose internal vertices are contained in $V'_{j+1}$, and whose leaves are contained in $Y$. Let $T_{j+1}$ be the linear forest obtained by adding the edges of the paths in $\mathcal P$ to $T_j$. Note that since each path in $\mathcal P$ joins two different paths in $T_j$, it reduces the number of leaves in $T_j$ by exactly $2$, so we have $t_{j+1} = t_j - 2|\mathcal P| \le \max\{2, t_j - 2(t_j/20)\} = \max\{2 ,  9t_j/10\}$, showing that $T_{j+1}$ satisfies \ref{itm:almostspanningpath-2}. Since the set of leaves of $T_{j+1}$ is a subset of the set of leaves in $T_j$ and $T_j$ satisfies \ref{itm:almostspanningpath-3}, it follows that $T_{j+1}$ satisfies \ref{itm:almostspanningpath-3}. Finally, since $V(T_{j+1}) \subseteq V(T_j) \cup V'_{j+1} \subseteq V(T_0) \cup V'_1 \cup \ldots \cup V'_{j+1}$, $T_{j+1}$ satisfies \ref{itm:almostspanningpath-1}. This shows that there is a linear forest $T_{j+1}$ satisfying \ref{itm:almostspanningpath-1}--\ref{itm:almostspanningpath-3}, as desired. 

		Hence, by repeatedly using \ref{itm:almostspanningpath-2} for $1 \le i \le \ell$, we obtain that $2 \le t_{\ell} \le \max\{2, (\frac{9}{10})^{\ell} \, t_0\}$. Since $\ell = 10 \log n$, we have $(\frac{9}{10})^{\ell} \, t_0 \le e^{-\frac{\ell}{10}} \, t_0 \le e^{-\log n} \, n = 1$. So $t_{\ell} = 2$. Therefore, the linear forest $T_{\ell}$ is actually a path. Moreover, $V(T_{\ell}) \subseteq V(T_0) \cup V'_1 \cup \ldots \cup V'_{\ell} \subseteq V' \cup X$ by \ref{itm:almostspanningpath-1}, and the leaves of $T_{\ell}$ are contained in $X_1 \cup X_t$ by \ref{itm:almostspanningpath-3}. Finally, $|X - V(T_{\ell})| \le |X - \bigcup_{i = 1 }^r V(P_i)|\le \frac{1}{2 \log n} |X|$ since $\bigcup_{i = 1 }^r V(P_i) = V(T_0) \subseteq V(T_{\ell})$ and $P_1, \ldots, P_r$ satisfy \ref{itm:findpaths-3}. This shows that $T_{\ell}$ is a path $P$ as required by the claim. 
	\end{proof}

\subsection{Finding an \texorpdfstring{$F$}{F}-subdivision in \texorpdfstring{$H$}{H} containing the nearly Hamilton path \texorpdfstring{$P$}{P}}
\label{sec:findingFsubdivisioncontainingP}
	In this subsection we complete the proof of Lemma~\ref{lem:almostspannningtopological} by finding a subdivision of $F$ that contains the nearly Hamilton path $P$ guaranteed by Claim~\ref{claim:almostspanningpath} with high probability. In fact, we will find a copy of a subdivision of the complete subgraph $K_f$ with $f \coloneqq |V(F)|$ such that a path $P'$ containing the nearly Hamilton path $P$ is one of the $\binom{f}{2}$ paths defining this copy. This is indeed sufficient to prove Lemma~\ref{lem:almostspannningtopological} because $F$ is contained in $K_f$ and we may assume that $P'$ joins two vertices which are adjacent in $F$. To that end, we will need the following well-known result.

	\begin{theorem}[Bollob\'as-Thomason~\cite{bollobas1998proof}, Koml\'os-Szemer\'edi~\cite{komlos1996topological}]
		\label{thm:BT}
		Let $p$ be a positive integer. Then, every graph with average degree at least $512 p^2$ contains a subdivision of the complete graph $K_p$. 
	\end{theorem}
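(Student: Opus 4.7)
The plan is to proceed in three steps, following the classical strategy for topological clique results, and reduce the problem to known tools on connectivity and linkage.

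First, I would reduce to a subgraph of high minimum degree. Starting from $G$ with $d(G) \ge 512 p^2$, I would iteratively delete any vertex whose current degree falls below $256 p^2$. Each such deletion decreases the edge count by less than $256 p^2$ and the vertex count by $1$, so the inequality $2 e(G) \ge 256 p^2 \cdot |V(G)|$ is preserved throughout the process. Since the graph is finite, the procedure terminates with a non-empty subgraph $G'$ of minimum degree at least $256 p^2$.

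Next, I would pass to a highly connected subgraph by invoking Mader's theorem, which states that every graph of minimum degree at least $4k$ contains a $(k+1)$-vertex-connected subgraph. Applied to $G'$ with $k = 64 p^2$, this yields a subgraph $G'' \subseteq G'$ that is $(64p^2+1)$-connected, and in particular $\binom{p}{2}\cdot 10$-connected. Finally, inside $G''$ I would pick any $p$ vertices $v_1, \ldots, v_p$ as branch vertices, and for each ordered pair $(i,j)$ with $i \ne j$ pick a distinct neighbour $u_{ij} \in N_{G''}(v_i)$ so that all $p + p(p-1) = p^2$ chosen vertices are distinct; this is possible because $\delta(G'') \ge 64 p^2 \ge p^2$. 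I would then invoke the Thomas--Wollan linkage theorem, which states that any $10k$-connected graph is $k$-linked: since $G''$ is $10 \binom{p}{2}$-connected, it is $\binom{p}{2}$-linked, so there exist internally vertex-disjoint paths $P_{ij}$ joining $u_{ij}$ to $u_{ji}$ for all unordered pairs $\{i,j\} \subseteq [p]$. Appending the edges $v_i u_{ij}$ and $u_{ji} v_j$ to each $P_{ij}$ yields $\binom{p}{2}$ internally vertex-disjoint paths between all pairs of branch vertices, which together constitute the desired $K_p$-subdivision.

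The main obstacle is that the Thomas--Wollan linkage theorem is a deep and relatively recent result that would be used as a black box here. A more self-contained alternative would replace the last step with a direct greedy construction: process the pairs $(i,j)$ in some order and build each connecting path by a breadth-first search in $G'$ after removing all branch vertices, all reserved ports and the internal vertices of previously constructed paths. In that version, one must ensure that the BFS balls from the two endpoints meet before the minimum degree in the residual graph is eroded below what is needed for further expansion; the $O(p^2)$ scale of the minimum-degree hypothesis (and the factor of $512$ in particular) is exactly what provides the slack to absorb the $\binom{p}{2} \cdot L$ vertices used up after building paths of length at most $L$.
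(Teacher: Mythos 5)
The paper does not prove this theorem; it is imported verbatim from the literature, so there is no in-paper argument to compare against. Your proposal is essentially a reconstruction of the Bollob\'as--Thomason proof (one of the two cited sources): clean to large minimum degree, pass to a highly connected subgraph via Mader's theorem, choose branch vertices and ports, and finish with a linkage theorem — except that you substitute the later Thomas--Wollan bound ($10k$-connected implies $k$-linked) for their original $22k$ version. The Koml\'os--Szemer\'edi proof is genuinely different and runs through sublinear expanders, i.e.\ the very machinery this paper develops. Your constants comfortably fit: minimum degree $256p^2$ yields a $(64p^2+1)$-connected subgraph, far above the $10\binom{p}{2}$ needed. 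There is no circularity in invoking Thomas--Wollan, since linkage theorems of this type rest on extremal results for minors, not topological minors.

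Three points need repair. First, and most substantively: a $k$-linkage only guarantees that the paths $P_{ij}$ are pairwise vertex-disjoint; nothing stops $P_{ij}$ from passing through a branch vertex $v_m$ with $m \notin \{i,j\}$, which destroys the subdivision. The standard fix is to choose the ports and apply the linkage theorem inside $G'' - \{v_1,\ldots,v_p\}$, which is still $(64p^2+1-p)$-connected and hence still $\binom{p}{2}$-linked. (The ports themselves are not a problem: each $u_{kl}$ is an endpoint of its own linkage path, so disjointness already keeps it off every other path.) Second, the invariant in your first step is off by a factor of two: deleting a vertex of degree below $256p^2$ preserves $e(G) \ge 256p^2|V(G)|$ (average degree at least $512p^2$), not $2e(G) \ge 256p^2|V(G)|$ as written; with the correct invariant, $e < |V|^2/2$ forces $|V| > 512p^2$, so the process cannot empty the graph and terminates with minimum degree at least $256p^2$. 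Third, the ``self-contained'' greedy BFS alternative does not work and should not be presented as a viable fallback: in a graph of minimum degree $\Theta(p^2)$ the BFS balls around two prescribed vertices need not meet after a bounded number of steps (consider a long path of cliques), and overcoming exactly this obstacle is what requires either the expander machinery of Koml\'os--Szemer\'edi or the linkage machinery you invoke. With the first two fixes, the linkage route is a correct proof.
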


	We are now ready to put everything together to complete the proof of Lemma~\ref{lem:almostspannningtopological}. 

\begin{proof}[ of Lemma~\ref{lem:almostspannningtopological}]
Let $V_0 \coloneqq V' \cup V_1 \cup V_2$ be a random partition of $V_0$ such that each $v \in V_0$ is independently included in $V'$ with probability $1/2$, in $V_1$ with probability $1/4$, and in $V_2$ with probability $1/4$.
		We claim that with probability at least $1 - \frac{22\log n}{n}$ the following five properties hold simultaneously.
		\begin{enumerate}[label = \rm(S\arabic*)]
			\item \label{itm:almostspanningsubdivision-4}
				For every $v \in R$, we have $|N_H(v) \cap R| \ge \frac{q_2 d}{5}$.
			\item \label{itm:almostspanningsubdivision-5}
				$|V_0| \le 1.01 q_1n$.
			\item \label{itm:almostspanningsubdivision-6}
				$|R| \le 1.01 q_2n$.
			\item \label{itm:almostspanningsubdivision-7}
				There is a path $P$ in $H$ such that $V(P) \subseteq V' \cup X$, the leaves of $P$, denoted $u_1, u_2$, are contained in $X_1 \cup X_t$, and $|X \setminus V(P)| \le \frac{1}{2\log n}|X| \le \frac{n}{2\log n}$.
			\item \label{itm:almostspanningsubdivision-8}
    For every pair of distinct vertices $u, v \subseteq X_1 \cup X_t \cup R$, there is a path in $H$ joining the vertices $u$ and $v$ whose internal vertices are in $V_1$, and there is another path in $H$ joining the vertices $u$ and $v$ whose internal vertices are in $V_2$. 
		\end{enumerate}
		Indeed, for \ref{itm:almostspanningsubdivision-4}, recall that every vertex $v \in V(H)$ is included in $R$ with probability $q_2$, and that $\delta(H) \ge \frac{d(H)}{2} \ge \frac{d}{4}$.
		Therefore, for any given vertex $v \in R$, we have $\mathbb{E}[|N_H(v) \cap R|] \ge \frac{q_2 d}{4}$. Thus, \ref{itm:almostspanningsubdivision-4} holds with probability at least $1 - \exp\left(-\Omega\left((\log n)^{2}\right)\right)$, by the Chernoff bound and the union bound.
        Since each vertex $v \in V(H)$ is included in in $V_0$ with probability $q_1$, and in $R$ with probability $q_2$, \ref{itm:almostspanningsubdivision-5} and \ref{itm:almostspanningsubdivision-6} hold with probability at least $1 - \exp\left(-\Omega\left((\log n)^{2}\right)\right)$, by the Chernoff bound. 
		Moreover, by \Cref{claim:almostspanningpath}, \ref{itm:almostspanningsubdivision-7} holds with probability at least $1 - \frac{21\log n}{n}$. Finally, \ref{itm:almostspanningsubdivision-8} holds with probability at least $1 - \frac{4}{n}$, by applying \Cref{claim:connectingverticesofY} twice (with $k = 2$ and $V_i, X_1 \cup X_t \cup R, \frac{q_1}{4}, 3q_2$ playing the roles of $V, W, p_1, p_2$, respectively, for $i \in [2]$) using that $3q_2 \le \frac{q_1}{400}$ and $d \ge \frac{8 (\log n)^{10c+21}}{(3q_2)^{10}}$. This shows that the properties \ref{itm:almostspanningsubdivision-4}--\ref{itm:almostspanningsubdivision-8} hold simultaneously with probability at least at least $1 - \frac{22\log n}{n}$, as desired.

		To prove Lemma~\ref{lem:almostspannningtopological}, we now condition on the properties \ref{itm:almostspanningsubdivision-4}--\ref{itm:almostspanningsubdivision-8} holding, and show how to find a subdivision of $F$ covering all but at most $\frac{n}{\log n}$ vertices of $H$.

		Let $f$ denote the number of vertices of $F$.
		By \ref{itm:almostspanningsubdivision-4}, the average degree of $H[R]$ is at least $\frac{q_2 d}{5} \ge \frac{d}{(\log n)^3} \ge \frac{512 d}{(\log n)^4}$. Therefore, by Theorem~\ref{thm:BT}, $H[R]$ contains a subdivision $K$ of the complete graph of order $\frac{\sqrt{d}}{(\log n)^2} \ge f$. In other words, there are $f$ vertices $v_1, \ldots, v_f \in V(K) \subseteq R$, and $\binom{f}{2}$ paths $P_{i,j}$ in $H[R]$, for $1 \le i < j \le f$, such that $P_{i,j}$ joins $v_i$ and $v_j$, and the interiors of the paths $P_{i,j}$ are pairwise vertex-disjoint and disjoint of $\{v_1, \ldots, v_f\}$. 

		Recall that, by \ref{itm:almostspanningsubdivision-7}, there is a path $P$ in $H$ joining vertices $u_1, u_2 \in X_1 \cup X_t$ such that $V(P) \subseteq V' \cup X$ and $|X \setminus V(P)| \le \frac{n}{2\log n}$. Our plan is to replace the path $P_{1,2}$ joining $v_1$ and $v_2$ in the subdivision $K$ with a path that contains $P$ as a subpath. By \ref{itm:almostspanningsubdivision-8}, we know that there is a path $P'_{u_1v_1}$ in $H$ joining the vertices $u_1$ and $v_1$ whose internal vertices are in $V_1$, and there is a path $P'_{u_2v_2}$ in $H$ joining the vertices $u_2$ and $v_2$ whose internal vertices are in $V_2$. Now let $P'_{1,2} \coloneqq P \cup P'_{u_1v_1} \cup P'_{u_2v_2}$. Observe that $P'_{1,2}$ is a path in $H$ joining $v_1$ and $v_2$, and $V(P'_{1,2}) \cap R = \{ v_1, v_2 \}$ since $V(P) \subseteq V' \cup X$ by \ref{itm:almostspanningsubdivision-7}. Hence, $(V(P'_{1,2}) \setminus \{ v_1, v_2 \}) \cap V(P_{i,j}) = \emptyset$ for every $1 \le i < j \le f$. 

		Let $K'$ be the subgraph of $H$ obtained by replacing the path $P_{1,2}$ in the subdivision $K$ with the path $P'_{1,2}$ (see Figure~\ref{fig:almostspanningsubdivision}). 
		Then, $K'$ is also a subdivision of the complete graph of order $\frac{\sqrt{d}}{(\log n)^2} \ge f$. 
		It is easy to see that (using that $F$ is non-empty) by omitting some of the paths $P_{i,j}$ if necessary, but keeping the path $P_{1,2}'$, we can obtain a subdivision $F'$ of $F$, we have
		\begin{align} \label{eqn:Kprime}
			\begin{split}
				|V(H) \setminus V(F')| 
				\le |V_0|+|X \setminus V(P)|+ |R| 
				&\le 1.01 q_1 n + \frac{n}{2\log n} + 1.01 q_2 n \\ 
				&= \frac{1.01n}{3\log n} + \frac{n}{2 \log n} + \frac{6 \cdot 1.01 n}{(\log n)^{3}} \le \frac{n}{\log n},
			\end{split}
		\end{align}
		where we used that the omitted paths are contained in $R$ and \ref{itm:almostspanningsubdivision-5}--\ref{itm:almostspanningsubdivision-7} for the second inequality. 
		This completes the proof of Lemma~\ref{lem:almostspannningtopological}.         
\end{proof}

\section{Applications}
\label{sec:application:chords}

In this section, we apply the ideas developed in the previous sections to prove our main result and present two further applications of our methods to other problems, each described in a separate short subsection. The latter two applications rely on the fact that every sufficiently regular expander with large enough average degree contains a nearly Hamilton cycle—a simple consequence of our methods (obtained by letting $F$ be a triangle in \Cref{lem:almostspannningtopological}).

\subsection{Packing a regular graph with \texorpdfstring{$F$}{F}-subdivisions}
\label{sec:proofofmainresult}

	In this subsection we prove our main result (Theorem~\ref{thm:packingsubdivisons}) by combining \Cref{cor:expandercoverstrong} and Lemma~\ref{lem:almostspannningtopological}. The former lemma shows that one can cover almost all vertices of any regular graph with sufficiently regular expanders, and the latter lemma guarantees an almost-spanning $F$-subdivision  within each of these expanders.

	\begin{proof}[ of Theorem~\ref{thm:packingsubdivisons}]
        Let $G$ be a $d$-regular graph of order $n$ with $d \ge (\log n)^{130}$. First, we plan to apply \Cref{cor:expandercoverstrong} to $G$. To that end, let $\alpha = \frac{1}{28}$, $C = 6$, so that $c = \frac{C(C-1)}{C - 28 \alpha-1} = 7.5$ and $C - 28 \alpha-1 = 4$. Let $\eps = (\log n)^{-5}$ so that $0 < \eps = (\log n)^{-(C-1)}$. Since $G$ is $d$-regular, we have $d(G) = d \ge d(1 - \eps)$ and $\Delta(G) \le d$. Moreover, note that $\frac{C - 28 \alpha-1}{C-1} = \frac{4}{5} \ge \frac{1}{2}$, and $C \ge 28 \alpha + 3 = 4$. Hence, by \Cref{cor:expandercoverstrong}, there is a collection $\cH$ of vertex-disjoint subgraphs of $G$ such that every $H \in \cH$ is a $(\frac{1}{8}, c, s_H)$-expander satisfying $d(H) \ge d(1 - \eps_H)$ and $\delta(H) \ge \frac{d(H)}{2}$, where $s_H = \frac{d}{4 (\log |V(H)|)^{c}}$, and $\eps_H = (\log |V(H)|)^{-4}$. Moreover, 
		\begin{equation}
			\label{eq:coverexp}
			\sum_{H \in \cH} |V(H)| \ge \left(1 - \frac{(\log \log \log n)^2}{(\log \log n)^{1/28}}\right)n.
		\end{equation}

		Let $H \in \cH$ be an arbitrary member of the collection $\cH$. Since $G$ is $d$-regular and $H$ is a subgraph of $G$, we have $\Delta(H) \le d$. Hence, by applying Lemma~\ref{lem:almostspannningtopological} to $H$ with $\eps_H$ and $s_H$ playing the roles of $\eps$ and $s$, respectively, we obtain a copy $K_H$ of an $F$-subdivision in $H$ such that 
		\begin{align}
			\label{eq:covertopolo}
			\begin{split}
				|V(K_H)| 
				\ge \left(1 - \frac{1}{\log |V(H)|}\right) |V(H)| 
				& \ge \left(1 - \frac{1}{\log d(H)}\right) |V(H)|  \\
				& \ge \left(1 - \frac{1}{\log \log n}\right) |V(H)|.
			\end{split}
		\end{align}
		Note that Lemma~\ref{lem:almostspannningtopological} is indeed applicable because $d \ge (\log n)^{130} \ge (\log |V(H)|)^{10 c + 51}$. Now consider the collection $\{ K_H \mid H \in \cH \}$ of vertex-disjoint copies of $F$-subdivisions in $G$. By \eqref{eq:coverexp} and \eqref{eq:covertopolo}, we have
		\begin{equation*}
			\sum_{H \in \cH} |V(K_H)| \ge \left(1 - \frac{1}{\log \log n}\right)\left(1 - \frac{(\log \log \log n)^2}{(\log \log n)^{1/28}}\right)n \ge \left(1 - \frac{1}{(\log \log n)^{1/30}}\right)n.
		\end{equation*}
		Hence $\{ K_H \mid H \in \cH \}$ is the desired $\TF$-packing in $G$, covering all but at most $\frac{n}{(\log \log n)^{1/30}}$ vertices of $G$. This completes the proof of Theorem~\ref{thm:packingsubdivisons}.
	\end{proof}

	\begin{rem}
		Note that the proof of Theorem~\ref{thm:packingsubdivisons} actually shows that it suffices for a graph to be nearly regular (rather than regular) to find a $\TF$-packing covering almost all of its vertices. More precisely, it shows that 
		any graph $G$ such that $d(G) \ge d(1 -\frac{1}{(\log n)^{5}})$, $\Delta(G) \le d$ and $d \ge (\log n)^{130}$ contains a $\TF$-packing which covers all but at most $\frac{n}{(\log \log n)^{1/30}}$ vertices of $G$. 
	\end{rem}

\subsection{Cycle partitions of regular graphs}
\label{subsec:cyclepartitions}

Our next application concerns the well-known conjecture of Magnant and Martin~\cite{magnant2009note}, asserting that every $d$-regular graph contains a collection of at most $\frac{n}{d+1}$ pairwise vertex-disjoint paths that cover all vertices. We establish an asymptotic version of this conjecture in a stronger form for sufficiently large $d$, allowing for $o(n)$ uncovered vertices but covering the vertices with cycles instead of paths.

\begin{theorem} \label{thm:cycle-partition}
	Let $G$ be a $d$-regular graph of order $n$, where $n$ is large enough and $d \ge (\log n)^{130}$.
	Then, there is a collection of at most $\frac{n}{d+1}$ vertex-disjoint cycles covering all but at most $O\left(\frac{n}{(\log \log n)^{1/30}}\right)$ vertices of $G$.
\end{theorem}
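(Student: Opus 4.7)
The plan is to apply Corollary~\ref{cor:expandercoverstrong} and Lemma~\ref{lem:almostspannningtopological} in sequence, exactly as in the proof of Theorem~\ref{thm:packingsubdivisons}, but with the graph $F$ taken to be a triangle $K_3$. Specifically, applying Corollary~\ref{cor:expandercoverstrong} with $\alpha = 1/28$ and $C = 6$ yields a collection $\cH$ of vertex-disjoint $(\tfrac{1}{8}, 7.5, s_H)$-expanders covering all but at most $O(n/(\log\log n)^{1/30})$ vertices of $G$, where each $H \in \cH$ satisfies $d(H) \ge d(1-\eps_H)$, $\Delta(H) \le d$, $\delta(H) \ge d(H)/2$, and $\eps_H = (\log |V(H)|)^{-4}$. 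Since every subdivision of $K_3$ is a cycle, Lemma~\ref{lem:almostspannningtopological} applied to each $H$ (with $F = K_3$, which clearly satisfies $|V(F)| = 3 \le \sqrt{d}/(\log n)^2$) produces a cycle $C_H$ in $H$ of length at least $|V(H)| - |V(H)|/\log|V(H)|$. The family $\{C_H\}_{H \in \cH}$ is then the candidate packing.

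For the uncovered count: since $\delta(H) \ge d(1-\eps_H)/2 \ge d/4$, we have $|V(H)| \ge d/4+1$ for every $H \in \cH$, so $\log|V(H)| = \Omega(\log d) = \Omega(\log\log n)$. Hence the total number of vertices of $\bigcup_H V(H)$ missed by the cycles is at most $\sum_H |V(H)|/\log|V(H)| = O(n/\log\log n)$, which, together with the $O(n/(\log\log n)^{1/30})$ vertices not covered by $\cH$ at all, yields an overall uncovered count of $O(n/(\log\log n)^{1/30})$, as required.

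The main (and essentially the only) delicate point is to ensure that $|\cH| \le n/(d+1)$. Using $|V(H)| \ge \Delta(H) + 1 \ge d(H) + 1 \ge d(1-\eps_H) + 1$ together with $\eps_H = O(1/(\log\log n)^4)$, one obtains
\[
    |\cH| \;\le\; \frac{n}{d(1-\eps_H)+1} \;\le\; \frac{n}{d+1} + O\!\left(\frac{n}{(d+1)(\log\log n)^4}\right),
\]
so $|\cH|$ may exceed $n/(d+1)$ by at most $O(n/((d+1)(\log\log n)^4))$ cycles. I would handle this by discarding this many smallest expanders in $\cH$ (and their cycles). By averaging, the total number of vertices in the discarded expanders is at most the number discarded times the average size $n/|\cH| = O(d+1)$, which is $O(n/(\log\log n)^4) = o(n/(\log\log n)^{1/30})$, comfortably absorbed into the uncovered slack. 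The remaining at most $n/(d+1)$ cycles then cover all but $O(n/(\log\log n)^{1/30})$ vertices of $G$, completing the proof. The only computations requiring care are the two elementary estimates above — the slight overshoot of $|\cH|$ beyond $n/(d+1)$ and the fact that $\log|V(H)|$ is large enough to make $\sum_H |V(H)|/\log|V(H)|$ negligible — both of which follow immediately from the lower bound $|V(H)| = \Omega(d)$ furnished by $\delta(H) \ge d/4$.
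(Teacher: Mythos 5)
Your proposal is correct and follows essentially the same route as the paper: cover almost all of $G$ with nearly regular expanders via \Cref{cor:expandercoverstrong}, extract a nearly Hamilton cycle from each via \Cref{lem:almostspannningtopological} with $F = K_3$, and then trim to at most $\frac{n}{d+1}$ cycles. The only (cosmetic) difference is in the trimming step: you discard the smallest excess expanders and bound the loss by the valid averaging observation that the $k$ smallest members of a family have average size at most the overall average, whereas the paper instead keeps an arbitrary subcollection of $\frac{n}{d+1}$ cycles and uses the lower bound $|C_H| \ge (1 - \frac{1}{\log\log n})d$ to show these alone already cover $n - O(\frac{n}{\log\log n})$ vertices; both arguments work.
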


Recently, Montgomery, M\"uyesser, Pokrovskiy, and Sudakov~\cite{montgomery2024approximate} proved a result closely related to \Cref{thm:cycle-partition}. 
Their theorem is stronger in that it does not impose any lower bound on $d$, 
but it yields a slightly weaker conclusion, producing a collection of paths rather than cycles. 
Moreover, their approach does not rely on sublinear expanders and is therefore quite different from the methods used in our work.

\begin{proof}[ of \Cref{thm:cycle-partition}]
	Write $C \coloneqq 6$ and $c \coloneqq \frac{C(C-1)}{C-2} = 7.5$. We apply \Cref{cor:expandercoverstrong} to the graph $G$ with parameters $\alpha = \frac{1}{28}$, $\eps = 0$, $n$, $d$, $C$ and $c$. Let $\cH$ be the resulting collection of expanders guaranteed by the lemma, so that every $H \in \cH$ is a $(\frac{1}{8}, c, s_H)$-expander satisfying $d(H) \ge (1 - \eps_H) d$ and $\delta(H) \ge d(H)/2$, where $s_H \coloneqq \frac{d}{4 (\log |V(H)|)^{c}}$ and $\eps_H \coloneqq (\log |V(H)|)^{-(C - 2)} = (\log |V(H)|)^{-4}$, and, moreover, $\sum_{H \in \cH}|V(H)| \ge (1 - \frac{(\log \log \log n)^2}{(\log \log n)^{1/28}})n$. Note that $d \ge (\log n)^{130} \ge (\log |V(H)|)^{10c+51}$.

	Next, we apply \Cref{lem:almostspannningtopological} to each $H \in \cH$, with $c$, $\eps_H$, $d$, $s_H$, and $|V(H)|$ taking the roles of $c$, $\eps$, $d$, $s$, and $n$, respectively (noting that these parameters satisfy the lemma's requirements), and with $F$ being a triangle. For each $H \in \cH$, this yields a cycle $C_H$ in $H$ that covers all but at most $\frac{|V(H)|}{\log |V(H)|}$ vertices of $H$, meaning that $|C_H| \ge (1 - \frac{1}{\log |V(H)|})|V(H)|$. Since $|V(H)| \ge d(H) \ge (1 - \eps_H)d = (1 - \frac{1}{(\log |V(H)|)^4})d$, this implies that $|C_H| \ge (1 - \frac{1}{\log |V(H)|} - \frac{1}{(\log |V(H)|)^4})d \ge (1 - \frac{1}{\log \log n})d$ (where in the last inequality we used that $\log |V(H)| \ge \log (d/2) \ge 2\log \log n$). Then, $\cC \coloneqq \{C_H : H \in \cH\}$ is a collection of vertex-disjoint cycles of length at least $(1 - \frac{1}{\log \log n})d$, covering all but at most $O\left(\frac{n}{(\log \log n)^{1/30}}\right)$ vertices of $G$ as shown by the following.
	\begin{align} \label{eqn:small-remainder}
		\begin{split}
		\sum_{H \in \cH}\frac{|V(H)|}{\log |V(H)|} + \left(n - \sum_{H \in \cH} |V(H)|\right)
		& \le \sum_{H \in \cH}\frac{|V(H)|}{\log \log n} + \frac{n (\log \log \log n)^2}{(\log \log n)^{1/28}} \\
		& \le n \left(\frac{1}{\log \log n} + \frac{(\log \log \log n)^2}{(\log \log n)^{1/28}}\right)
		= O\left(\frac{n}{(\log \log n)^{1/30}}\right).
		\end{split}
	\end{align}
	Let $\cC' \subseteq \cC$ be a subcollection of cycles consisting of $\min\{\frac{n}{d+1}, |\cC|\}$ cycles in $\cC$. We claim that $\cC'$ covers all but $O(\frac{n}{(\log \log n)^{1/30}})$ vertices in $G$. Indeed, this is clear from the choice of $\cC$ and \eqref{eqn:small-remainder} if $|\cC'| = |\cC|$ (i.e.\ if $\cC' = \cC$). Otherwise, $\cC'$ is a collection of $\frac{n}{d+1}$ vertex-disjoint cycles of length at least $(1 - \frac{1}{\log \log n})d$, implying that the cycles in $\cC'$ cover at least $(1 - \frac{1}{\log \log n}) d \cdot \frac{n}{d+1} \ge (1 - \frac{2}{\log \log n}) n = n - O(\frac{n}{\log \log n})$ vertices of $G$, as desired.
\end{proof}

\subsection{Cycles with many chords}
\label{subsec:cyclewithmanychords}

	Our last application is about finding a cycle with many chords, addressing an old question of Chen, Erd\H{o}s and Staton~\cite{chen1996proof}. 

\begin{corollary}
\label{cor:chords}
If $n$ is sufficiently large, then every $n$-vertex graph with at least $n(\log n)^{130}$ edges contains a cycle $C$ with at least $|C|$ chords.
\end{corollary}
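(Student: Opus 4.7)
The plan is to apply our tools for finding an expander subgraph with good regularity and then a nearly Hamilton cycle inside it, and finally to exploit the minimum degree condition on the expander to count many chords.

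First I would apply \Cref{cor:findexpander} to $G$. Set $C = 6$ so that $c = \frac{C(C-1)}{C-2} = 7.5$, and choose $d = (\log n)^{126}$, which is the bound required by \Cref{lem:almostspannningtopological}'s hypothesis $d \ge (\log n)^{10c+51}$. Since $e(G) \ge n (\log n)^{130}$, we have $d(G) \ge 2(\log n)^{130} \ge \gamma d \log n$ for large $n$, so \Cref{cor:findexpander} produces a subgraph $H \subseteq G$ which is a $(\tfrac18, c, s)$-expander with $\Delta(H) \le d$, $d(H) \ge d(1 - \mu)$ and $\delta(H) \ge d(H)/2$, where $\mu = (\log |V(H)|)^{-4}$ and $s = d/(4(\log |V(H)|)^c)$. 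Note that $|V(H)| \ge \delta(H) \ge d/3$, so $\log |V(H)| \le \log n$ and hence $(\log |V(H)|)^{10c+51} \le d$.

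Next, I would apply \Cref{lem:almostspannningtopological} to $H$ with $F = K_3$ (so that $|V(F)| = 3 \le \sqrt{d}/(\log |V(H)|)^2$), and with $|V(H)|$, $\mu$, $s$, $d$ playing the roles of $n$, $\eps$, $s$, $d$. A subdivision of $K_3$ is simply a cycle, so this yields a cycle $C \subseteq H$ with
\begin{equation*}
  |V(C)| \ge |V(H)|\left(1 - \tfrac{1}{\log |V(H)|}\right).
\end{equation*}
Let $m \coloneqq |V(H)|$, $U \coloneqq V(H) \setminus V(C)$, so that $|U| \le m/\log m$.

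Finally, I would count chords of $C$ in $H$ (which are also chords in $G$). Since $d(H) \ge d(1 - \mu)$ we have $\sum_{v \in V(H)} d_H(v) \ge md(1-\mu)$, and since $\Delta(H) \le d$, summing only over $V(C)$ loses at most $|U| \cdot d$, so
\begin{equation*}
  \sum_{v \in V(C)} d_H(v) \ge md(1 - \mu) - |U| d \ge md\bigl(1 - \mu - \tfrac{1}{\log m}\bigr).
\end{equation*}
The edges from $V(C)$ to $U$ number at most $|U| d \le md/\log m$, so
\begin{equation*}
  2 e_H(V(C)) \ge md\bigl(1 - \mu - \tfrac{2}{\log m}\bigr).
\end{equation*}
Since $d$ is polylogarithmically large while $\mu + 2/\log m = o(1)$, the number of chords of $C$ in $G$ is
\begin{equation*}
  e_H(V(C)) - |V(C)| \ge \frac{md}{3} - m \ge m \ge |V(C)|,
\end{equation*}
as required. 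I do not expect any real obstacle here beyond bookkeeping: the heavy lifting is done by \Cref{cor:findexpander} and \Cref{lem:almostspannningtopological}, and the chord count is a short degree calculation using $\delta(H) \ge d(H)/2$ together with the fact that the cycle $C$ misses only a $1/\log m$ fraction of the vertices of $H$.
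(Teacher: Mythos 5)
Your proposal is correct and follows essentially the same route as the paper: apply \Cref{cor:findexpander} with $C=6$, $d=(\log n)^{126}$ to get a nearly regular expander $H$, then \Cref{lem:almostspannningtopological} with $F=K_3$ to get a cycle missing at most a $1/\log m$ fraction of $V(H)$, and finally a degree count showing $e_H(V(C)) \ge 2|C|$. The bookkeeping matches the paper's, so nothing further is needed.
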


This provides an alternative proof of a recent result of Dragani\'c, Methuku, Munh\'a Correia, 
and Sudakov~\cite{draganic2023cycles}, who obtained a stronger bound with a smaller 
polylogarithmic factor. Specifically, they showed that $\Omega(n(\log n)^8)$ edges suffice to 
guarantee the existence of a cycle $C$ with at least $|C|$ chords. 
While their argument also relies on sublinear expansion, it is based on self-avoiding 
random walks and is therefore quite different from the approach used here.

We now prove \Cref{cor:chords} by combining \Cref{cor:findexpander}, which shows the existence of a nearly regular expander in a graph with sufficiently large average degree, with \Cref{lem:almostspannningtopological}, which finds a nearly Hamilton cycle within such an expander.

\begin{proof}[ of Corollary~\ref{cor:chords}]
	Let $G$ be an $n$-vertex graph with at least $n(\log n)^{130}$ edges. 
	Applying \Cref{cor:findexpander}, with $C = 6$ and $d = (\log n)^{126}$, we obtain a subgraph $H \subseteq G$ which is a $(\frac{1}{8}, c, s)$-expander satisfying $\Delta(H) \le d$, $d(H) \ge (1 - \mu)d$ and $\delta(H) \ge d(H)/2$, where $c = 7.5$, $s \coloneqq \frac{d}{4 (\log m)^c}$, $\mu = (\log m)^{-4}$,  and $m \coloneqq |V(H)|$.

	Applying Lemma~\ref{lem:almostspannningtopological} to $H$ with $K_3$ playing the role of $F$, we obtain a cycle $C$ in $H$ such that 
	\begin{equation*}
		|V(H) \setminus V(C)| \le \frac{m}{\log m}.
	\end{equation*}
	(Note that Lemma~\ref{lem:almostspannningtopological} is indeed applicable here because $d = (\log n)^{126} = (\log n)^{10c + 51} \ge (\log m)^{10 c + 51}$.) Hence, at most $\frac{dm}{\log m}$ edges of $H$ are incident to vertices of $H$ not contained in $C$. This implies that the number of edges spanned by $C$ is at least
	\begin{equation*}
		\frac{d(1-\mu)m}{2} - \frac{dm}{\log m} 
		\ge  \frac{dm}{4} \ge  2|C|.
	\end{equation*}
	Here, we rely on the fact that $m$ is large, which follows from $m \ge d(H) \ge (1 - \mu)d = \Omega((\log n)^{126})$ and the assumption that $n$ is sufficiently large.
	This shows that $C$ is a cycle with at least $|C|$ chords, completing the proof of Corollary~\ref{cor:chords}.
\end{proof}

\section{Concluding remarks}

One very useful feature of \Cref{cor:expandercoverstrong} is that it allows us to extract a robust sublinear expander from a regular graph while retaining quantitative control over its regularity, albeit at the expense of weakening its expansion. More precisely, it shows that for any $c_1 \ge 2$ there exists a constant $c_2>0$ such that every $d$-regular graph $G$ of sufficiently large degree (at least polylogarithmic in the number of vertices) contains a sublinear expander $H$ with average degree at least $d\bigl(1-\frac{1}{(\log |V(H)|)^{c_1}}\bigr)$ and expansion factor $\frac{1}{(\log |V(H)|)^{c_2}}$. Thus, by choosing $c_1$ large, we may enforce strong regularity properties for $H$. However, this comes at the cost of increasing $c_2$, and hence weakening the expansion of the resulting expander. From the perspective of potential applications, it would be particularly interesting to determine whether one can obtain significantly stronger expansion—say, of order $\frac{1}{(\log |V(H)|)^{2}}$—while still maintaining good control over the regularity of the expander.

Moreover, the notion of regularity employed throughout this paper requires that the average degree of $H$ is close to its maximum degree. While this condition suffices for our purposes, a more canonical notion of almost-regularity would demand that the minimum degree is also close to the maximum degree, that is, $\Delta(H)/\delta(H)=1+o(1)$. It would therefore be very interesting to establish whether every $d$-regular graph contains an almost regular expander in this stronger sense. In particular, one may ask whether every $d$-regular graph necessarily contains a \emph{regular} robust sublinear expander.

A further natural question is whether one can obtain a perfect cover using expanders. Specifically, does every $n$-vertex $d$-regular graph admit a decomposition into vertex-disjoint robust sublinear expanders? While \Cref{cor:expandercoverstrong} yields a cover of all but $o(n)$ vertices, it remains open whether this error term can be eliminated. It would also be very interesting to determine whether the assumption $d \ge 2\log n$ in \Cref{cor:expandercoverstrong} can be removed, or at least substantially weakened.



\textbf{Acknowledgements.} We thank Dong-yeap Kang for bringing~\cite{kuhn2005packings} to our attention. 
We are grateful to Matija Buci\'c, Nemanja Dragani\'c, Oliver Janzer, Dong-yeap Kang, Richard Montgomery, 
Alp M\"uyesser, and Jacques Verstra\"ete for many helpful discussions related to this work. 
We are also grateful to the anonymous referee for a careful reading of our paper and for providing 
detailed comments that significantly improved the presentation.

\bibliographystyle{abbrv}
\bibliography{references}

\appendix

\end{document}